\documentclass[11pt,reqno]{article}

\usepackage[leqno]{amsmath}
\usepackage{amssymb,xfrac}
\usepackage{amsthm}
\usepackage{ifthen} 
\usepackage[pagebackref,breaklinks,unicode]{hyperref} 
    \renewcommand*{\backrefalt}[4]{\ifcase #1 (Not cited).\or (Cited p.~#2).\else (Cited pp.~#2).\fi} 
\usepackage[capitalise,noabbrev]{cleveref}
\usepackage{enumitem}
\usepackage{graphicx}
\usepackage{tikz-cd, tikz}
\usepackage{float}
\usepackage[justification=centering]{caption}
\usepackage{subcaption}
\usepackage{wasysym}

\numberwithin{equation}{section}

\newtheorem{theorem}{Theorem}[section]
\newtheorem{proposition}[theorem]{Proposition}
\newtheorem{lemma}[theorem]{Lemma}
\newtheorem{corollary}[theorem]{Corollary}

\newtheorem*{theorem*}{Theorem}
\newtheorem*{proposition*}{Proposition}
\newtheorem*{lemma*}{Lemma}
\newtheorem*{corollary*}{Corollary}

\newtheorem{mthm}{Theorem} 

\theoremstyle{definition}
\newtheorem{definition}[theorem]{Definition}
\newtheorem{observation}[theorem]{Observation}
\newtheorem{remark}[theorem]{Remark}
\newtheorem{example}[theorem]{Example}
\newtheorem{question}[theorem]{Question}

\newtheorem{notation}[theorem]{Notation}

\newtheorem*{definition*}{Definition}
\newtheorem*{observation*}{Observation}
\newtheorem*{remark*}{Remark}
\newtheorem*{example*}{Example}
\newtheorem*{question*}{Question}
\newtheorem*{exercise*}{Exercise}
\newtheorem*{fact*}{Fact}
\newtheorem*{notation*}{Notation}

\newcommand{\Z}{\mathbb{Z}}

\DeclareMathOperator{\diam}{diam}

\newcounter{claimref}
\newcounter{claimcount}

\newenvironment{claim*}{\par\medskip\noindent\textbf{Claim:}\hspace{0.5mm}}{}

\newenvironment{claim*proof}{\medskip\noindent\emph{Proof of Claim.}\hspace{0.5mm}}
    {\leavevmode\unskip\penalty9999\hbox{}\nobreak\hfill\quad\hbox{$\diamondsuit$}\medskip}

\newcounter{subclaimcount}

\newenvironment{subclaim*}{\par\medskip\textbf{Subclaim:}\hspace{0.5mm}}{}
\newenvironment{subclaimproof*}{\medskip\noindent\emph{Proof of Subclaim.}\hspace{0.5mm}}
    {\leavevmode\unskip\penalty9999\hbox{}\nobreak\hfill\quad\hbox{$\fullmoon$}\medskip}

\crefname{cond}{condition}{conditions}
\creflabelformat{cond}{#2#1\@#3}
\crefname{obs}{observation}{observations}
\creflabelformat{obs}{#2#1\@#3}

\newcommand*{\eps}{\varepsilon}

\newcommand*{\R}{\mathbb{R}}

\newcommand*{\cQ}{\mathcal{Q}}
\newcommand*{\dist}{d}

\DeclareMathOperator{\type}{Typ}
\DeclareMathOperator{\sing}{sing}
\newcommand*{\sgen}[1]{\langle#1\rangle}
\newcommand*{\syl}[1]{|#1|_{\mathrm{syl}}}
\newcommand*{\bigast}{\mathop{\scalebox{1.5}{\raisebox{-0.2ex}{$*$}}}}
\renewcommand*{\subset}{\subseteq}
\renewcommand*{\geq}{\geqslant}
\renewcommand*{\leq}{\leqslant}
\renewcommand*{\ge}{\geqslant}
\renewcommand*{\le}{\leqslant}

\newcounter{shcount}
\newcounter{thmcount}

\swapnumbers
\newcommand*{\bsh}[1]{\theoremstyle{definition}\newtheorem{subhead\theshcount}[theorem]{#1}
    \begin{subhead\theshcount}} 
\newcommand*{\esh}{\end{subhead\theshcount}\stepcounter{shcount}} 
 
\newcommand*{\numberedtheorem}[3]{\theoremstyle{plain}\newtheorem*{makethm\thethmcount}{#1}
    \ifthenelse{\equal{#2}{}}{\begin{makethm\thethmcount}#3\end{makethm\thethmcount}\stepcounter{thmcount}}
    {\begin{makethm\thethmcount}[#2]#3\end{makethm\thethmcount}\stepcounter{thmcount}}} 

\makeatletter\newcommand{\linkdest}[1]{\Hy@raisedlink{\hypertarget{#1}{}}}\makeatother 

\usepackage{lmodern,anyfontsize} 
\usepackage{setspace} 
\usepackage{tocloft} 
\usepackage[nottoc]{tocbibind}

\usepackage{amsmath,amssymb,xfrac}
    \usepackage{enumitem} \setlist{nosep}
\usepackage{ifthen} 
\usepackage[pagebackref,breaklinks,unicode]{hyperref} 
    \renewcommand*{\backrefalt}[4]{\ifcase #1 (Not cited).\or (Cited p.~#2).\else (Cited pp.~#2).\fi} 
\usepackage{graphicx} 
\usepackage{mathrsfs}
\usepackage[T1]{fontenc}
\usepackage[margin=3cm]{geometry}
\usepackage{natbib}

\newcommand*{\ext}{\mathrm{ext}}
\newcommand*{\N}{\mathbb{N}}

\makeatletter
\newtheorem*{rep@theorem}{\rep@title}
\newcommand{\newreptheorem}[2]{%
\newenvironment{rep#1}[1]{%
 \def\rep@title{#2 \ref{##1}}%
 \begin{rep@theorem}}%
 {\end{rep@theorem}}}
\makeatother

\newreptheorem{theorem}{Theorem}
\newreptheorem{example}{Example}
\newreptheorem{fact}{Fact}

\newcommand*{\ssm}{\smallsetminus}

\definecolor{harrycomment}{rgb}{0.6,0,0.4}

\definecolor{shakedcomment}{rgb}{0, 0, 255}

\definecolor{oussamacomment}{rgb}{0,0.5,0}

\title{Quasiisometric embeddings of right-angled Artin groups into products}
\hypersetup{pdftitle = Quasiisometric embeddings of right-angled Artin groups into products}
\date{}
\author{Shaked Bader, Oussama Bensaid, and Harry Petyt}

\newcommand{\Addresses}{{\bigskip\footnotesize\par
\textsc{Mathematical Institute, University of Oxford, UK}\par\nopagebreak\textit{E-mail address}: 
\texttt{shaked.bader@sjc.ox.ac.uk}
\par\medskip\par
\textsc{Institut de Recherche en Mathématique et en Physique, Université Catholique de Louvain, Belgium} \par\nopagebreak\textit{E-mail address}: 
\texttt{oussama.bensaid@uclouvain.be}
\par\medskip\par
\textsc{Mathematics Institute, University of Warwick, UK}\par\nopagebreak\textit{E-mail address}: 
\texttt{harrypetyt@gmail.com}
}}

\begin{document}

\maketitle

\begin{abstract}
We provide obstructions to the existence of quasiisometric embeddings of right-angled Artin groups into products of trees, hyperbolic spaces, and other right-angled Artin groups.
\end{abstract}

{\hypersetup{hidelinks}\setcounter{tocdepth}{1}\tableofcontents\setcounter{tocdepth}{2}}

\section{Introduction} 

A \emph{right-angled Artin group} is a ``free partially abelian group'': it has a generating set where the only relations are that some generators commute. This can be encoded by a simplicial graph, with a vertex for each generator and an edge corresponding to commutation. The correspondence is bijective \cite{droms:isomorphisms}, so we can unambiguously pass between a graph $\Gamma$ and its associated right-angled Artin group $A_\Gamma$.

On one end of the spectrum are free groups, which correspond to edgeless graphs, and on the other are free-abelian groups, corresponding to complete graphs.
Between these simple extremes hides a large variety of examples with different behaviours. The class is closed under free products (disjoint unions of graphs) and direct products (joins of graphs). In this article we are interested in the large-scale geometry of right-angled Artin groups.

Quasiisometries provide a natural notion of isomorphism in the setting of large-scale geometry, and there has been a lot of work on the quasiisometric classification of right-angled Artin groups; see \cite{bestvinakleinersageev:asymptotic,huang:quasiisometric:1,huang:quasiisometric:2,huang:commensurability,margolis:quasiisometry}, for instance. More generally, it is natural to ask about quasiisometric embeddings (geometric monomorphisms) between right-angled Artin groups, and these were studied in \cite{baderbensaidpetyt:quasiisometric:rigidity,baderbensaidpetyt:quasiisometric:flexibility}. Because of examples like the log-spiral, when working with quasiisometric embeddings it is reasonable to assume that the domain and codomain have the same dimension, which is something that holds automatically for quasiisometries. 
(The \emph{dimension} of a right-angled Artin group $A_\Gamma$ is the maximal dimension of its $\mathbb{Z}^n$ subgroups, which is equal to the clique number of $\Gamma$.)

Amongst other things, in \cite[Thm~5.8]{baderbensaidpetyt:quasiisometric:rigidity} it was shown that 
if a product of $n$ free groups quasiisometrically embeds in an $n$--dimensional right-angled Artin group $A_\Gamma$, then that fact can already be seen in the graph $\Gamma$: it must contain an induced subgraph that is a join of $n$ edgeless graphs. Here we are interested in the reverse question: when does a right-angled Artin group quasiisometrically embed in a product?

In the positive direction, a beautiful result of Rull \cite[Thm~1.1]{rull:embedding} shows that if $\Gamma$ is $n$--colourable, then $A_\Gamma$ can be quasiisometrically embedded in $(F_2)^n$, the product of $n$ free groups of rank two.
Her proof uses the ``Alice's diary'' construction introduced for hyperbolic groups in \cite{buyalodranishnikovschroeder:embedding}. These methods have also been refined by Nairne to construct quasiisometric embeddings of toral relatively hyperbolic groups into products of free groups; see \cite{nairne:embedding}.

Our principal focus in this article is on finding obstructions to the existence of such embeddings. The main technical result we prove for this purpose is the following, which extends results of \cite{baderbensaidpetyt:quasiisometric:rigidity} to a larger family of graphs. See \cref{thm:beyond_FB_induced_embedding_singboundary}, \cref{rem:bg_asrk}, and \cref{rem:bg_hyp} for more general statements.

\begin{mthm} \label{mthm:bg_induced}
Let $\Gamma$ be a graph with clique number $n$, and let $Y$ be an $n$--dimensional CAT(0) cube complex. If $\Gamma$ is branch-graded and there is a quasiisometric embedding $A_\Gamma\to Y$, then there is a combinatorial embedding $\Gamma\to\partial_{\mathrm{sing}}Y$.
\end{mthm}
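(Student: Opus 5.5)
We plan to follow the strategy of \cite{baderbensaidpetyt:quasiisometric:rigidity}, with the branch-graded hypothesis replacing the earlier graph-theoretic assumption. Fix a quasiisometric embedding $f\colon A_\Gamma\to Y$. For each vertex $v\in\Gamma$, choose a maximal clique $\sigma\ni v$, giving a flat $\mathbb{Z}^n=A_\sigma\le A_\Gamma$ and the cosets of the cyclic subgroup $\langle v\rangle$ inside it. The image under $f$ of an $n$--flat in an $n$--dimensional CAT(0) cube complex is at finite Hausdorff distance from a finite union of orthants (the quasiflats theorem for cube complexes of dimension $n$, in the form of Huang / Bowditch). The first step is to use this to assign to each vertex $v$ a point $\xi_v\in\partial_{\mathrm{sing}}Y$. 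Concretely, the ray $\langle v\rangle^{+}$ together with rays from the other generators of $\sigma$ spans a quasi-orthant, and its image determines a cubical orthant in $Y$. We take $\xi_v$ to be the singular boundary point (a simplex of $\partial_{\mathrm{sing}}Y$, i.e.\ a class of $\ell^1$--type boundary directions) corresponding to the $v$--direction of that orthant.

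The second step is to show that $\xi_v$ is independent of the auxiliary choices. These are the choice of maximal clique $\sigma\ni v$ and of coset. This is where branch-gradedness enters. Two maximal cliques through $v$ are connected by a chain of maximal cliques that differ in one vertex at a time, as seen in the link structure. Adjacent flats in this chain share an $(n-1)$--dimensional half-flat containing the $v$--direction. Uniqueness of orthant decompositions for quasi-half-flats then forces the $v$--directions to agree. The grading should let us induct over branches so that the comparison always happens through a common codimension-one subflat.

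The third step verifies that $v\mapsto\xi_v$ is a combinatorial embedding. For edges, if $u,v$ are adjacent they lie in a common maximal clique, so $\xi_u$ and $\xi_v$ are directions of a common orthant and hence span a simplex of $\partial_{\mathrm{sing}}Y$. For non-edges and injectivity, if $u\ne v$ we must show $\xi_u\ne\xi_v$, and that they are not joined when $u,v$ do not commute. Since $\langle u,v\rangle\cong F_2$ is quasiisometrically embedded, with divergent cosets, the images of the $u$-- and $v$--rays diverge linearly. An orthant with both as coordinate directions would yield a quasiflat containing both, contradicting divergence; distinct vertices likewise give linearly diverging rays, so the boundary points differ.

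The main obstacle is step two, i.e.\ well-definedness across different maximal cliques. Outside the earlier class of graphs, distinct maximal flats through a vertex may meet only in low dimension, and a naive argument could assign different singular boundary points. I would first try to exploit the branch-graded structure to build, for any two maximal cliques through $v$, a sequence in which consecutive ones share $n-1$ vertices including $v$. Failing that, I would pass to a suitable grade-by-grade induction on top-dimensional pieces, using the codimension-one rigidity of orthants.
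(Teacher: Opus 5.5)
\textbf{There is a genuine gap, and it is in Step~1.} You assume that the image of a coordinate ray $\langle v\rangle^{+}$ lies close to an axis ray of one of the cubical orthants making up $f(F_\sigma)$, so that ``the $v$--direction'' is a vertex of $\partial_{\mathrm{sing}}Y$. The quasiflats theorem gives no such control. It only says that $f(F_\sigma)$ is Hausdorff-close to a finite union of orthants. Already the map $(x,y)\mapsto(x+y,x-y)$ from $\mathbb{Z}^2$ to the square tiling of $\mathbb{R}^2$ sends both axes to diagonals, and these are not close to any singular ray.

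Forcing images of standard geodesics to be singular is exactly what the branching hypotheses are for, and it is a substantial input you do not supply. For a branch-complemented vertex $v$, the paper shows that $f(\gamma_v)$ is close to a finite union of singular rays, using \cite[Lem.~3.6(1)]{baderbensaidpetyt:quasiisometric:rigidity}, Huang's theorem \cite{huang:top} and \cite[Prop.~5.7]{baderbensaidpetyt:from}. For vertices of larger complementary degree, $f(\gamma_v)$ need not be close to any singular ray. So your Step~2 is chasing a canonical point that in general does not exist. The chain of maximal cliques sharing $n-1$ vertices that you hope for is also not provided by \cref{def:bg}. The embedding in the theorem is deliberately non-canonical.

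\textbf{The missing idea is the induction on complementary degree.} For $v$ with $d_{\mathrm{comp}}(v)=k>0$, one ignores $f(\gamma_v)$ altogether. Since $K(v)$ is branching, $\hat f(\widehat{F_{K(v)}})$ is a finite union of singular $k$--orthants whose Tits boundary is a $(k-1)$--sphere. So the simplex spanned by the rays already chosen for the lower-degree vertices of $K(v)$ extends to a top-dimensional simplex. Its remaining axis rays are then assigned arbitrarily to the degree-$k$ vertices of $K(v)$.

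With such choices, Step~3 also needs different arguments.
\begin{itemize}
\item \emph{Edges.} Adjacency of $u$ and $v$ does not by itself make the chosen rays span an orthant. For example, in a product of two tripods, a quasiflat made of six quadrants can send both positive coordinate rays of a standard $2$--flat to singular rays of the same tree factor. The paper instead uses condition~(3) of \cref{def:bg} to show that every relevant maximal clique has the form $B\sqcup K(v)$ with $B$ branch-complemented. It then applies the splitting $\hat f(\widehat{F_{B\sqcup K(v)}})=\hat f(\widehat{F_B})\times\hat f(\widehat{F_{K(v)}})$ of \cref{prop:branch_complemented_splitting_cone}.
\item \emph{Injectivity.} This cannot come from divergence of the images of the $u$-- and $v$--geodesics, since the chosen rays are not those images. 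The paper uses $\hat f(\widehat{F_K})\cap\hat f(\widehat{F_{K'}})=\hat f(\widehat{F_{K\cap K'}})$ together with a dimension count.
\item \emph{Non-edges.} You need not show that non-edges go to non-edges. A combinatorial embedding here is just an injective map sending edges to edges.
\end{itemize}
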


The \emph{branch-graded} assumption (\cref{def:bg}) is a condition on the intersection pattern of the maximal cliques in $\Gamma$ that covers many natural examples. When $n=2$, it is equivalent to $\Gamma$ having no leaves. For $n=3$ it includes most simplicial triangulations of compact surfaces (\cref{lem:boundary_sparse}), and in general it includes all simplicial triangulations of closed manifolds (\cite[Ex.~3.3]{baderbensaidpetyt:quasiisometric:rigidity}). The \emph{singular boundary graph} $\partial_{\mathrm{sing}}Y$ of a CAT(0) cube complex $Y$, introduced in \cite{baderbensaidpetyt:from}, is a subset of the Tits boundary that is sensitive to the cubical structure of $Y$; see \cref{def:singular_boundary_graph}.

We remark that in the two--dimensional setting, \cref{mthm:bg_induced} was already established in \cite[Thm~K]{baderbensaidpetyt:quasiisometric:rigidity} for all triangle-free graphs with no connected components that are stars. The novelty of \cref{mthm:bg_induced} is in the higher-dimensional setting.

Using \cref{mthm:bg_induced} we prove the following, which in particular shows that Rull's theorem is optimal in the two-dimensional case. See \cref{thm:fb_in_product_trees}.


\begin{mthm} \label{mthm:rull}
Let $\Gamma$ be a graph with clique number $n$. If $n>2$ then assume that $\Gamma$ is branch-graded. The following are equivalent. 
\begin{enumerate}
\item   $\Gamma$ is $n$--colourable.
\item   $A_\Gamma$ can be equivariantly isometrically embedded in a product of $n$ locally infinite trees. \label{mit:2}
\item   $A_\Gamma$ can be quasiisometrically embedded in $(F_2)^n$. \label{mit:3}
\item   $A_\Gamma$ can be quasiisometrically embedded in a product of $n$ hyperbolic spaces. 
\end{enumerate}
\end{mthm}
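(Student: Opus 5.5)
The plan is to prove the cycle of implications (1)$\Rightarrow$(2)$\Rightarrow$(3)$\Rightarrow$(4)$\Rightarrow$(1).

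\emph{(1)$\Rightarrow$(2).} Fix an $n$--colouring $c\colon V(\Gamma)\to\{1,\dots,n\}$. For each colour $i$, the vertices of colour $i$ span an edgeless induced subgraph, so the subgroup they generate is free. There is a retraction $r_i\colon A_\Gamma\to A_{\Gamma_i}$ that kills all generators of other colours; this is a homomorphism because two vertices of the same colour never commute in the relations that are killed. Rather than using these retractions directly, I would build the trees from hyperplanes of the Salvetti complex. Hyperplanes of the universal cover $\widetilde S_\Gamma$ are labelled by vertices of $\Gamma$. Two crossing hyperplanes have adjacent labels, hence different colours. So the hyperplanes of colour $i$ are pairwise disjoint, and the restriction quotient to them is a tree $T_i$. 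This tree is locally infinite, since infinitely many hyperplanes of a given label abut a vertex along a coset. The induced map $\widetilde S_\Gamma\to\prod_i T_i$ is $A_\Gamma$--equivariant. It is an isometric embedding on vertex sets with the $\ell^1$ metric, because every hyperplane lies in exactly one class, so the combinatorial distance is the sum of the distances in the trees. Hence $A_\Gamma$ equivariantly isometrically embeds (in the $\ell^1$ sense, and so quasiisometrically with the $\ell^2$ metric).

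\emph{(2)$\Rightarrow$(3).} Each tree in (2) is the image of a group orbit of a finitely generated group, so the relevant portion of the tree has bounded-valence-style growth controlled by the generators. Alternatively, I would invoke Rull's theorem directly, which gives (1)$\Rightarrow$(3) outright. The implication (3)$\Rightarrow$(4) is trivial, since $F_2$ is hyperbolic.

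\emph{(4)$\Rightarrow$(1).} This is the main step. Suppose $A_\Gamma\to\prod_{i=1}^n H_i$ is a quasiisometric embedding. The first task is to replace the $H_i$ by $n$--dimensional CAT(0) cube complexes. I would try to use the version of \cref{mthm:bg_induced} for hyperbolic factors indicated in \cref{rem:bg_hyp}. Failing that, I would pass to asymptotic cones, or approximate finite subsets of each $H_i$ by trees, reducing to a product of $n$ trees $Y=\prod T_i$. Such a product is an $n$--dimensional CAT(0) cube complex.

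When $n=2$, I would use \cite[Thm~K]{baderbensaidpetyt:quasiisometric:rigidity}; star components are trivially $2$--colourable, so they can be handled separately. When $n>2$, the branch-graded hypothesis lets me apply \cref{mthm:bg_induced} to obtain a combinatorial embedding $\Gamma\to\partial_{\mathrm{sing}}Y$.

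It then remains to show that $\partial_{\mathrm{sing}}(\prod_{i=1}^n T_i)$ is $n$--colourable in a way that pulls back to $\Gamma$. I expect this to be the join of the $n$ discrete sets $\partial T_i$. Each singular boundary point is a ray in a single factor direction, since singular points correspond to single-factor rays. Colouring each point by its factor index gives an $n$--colouring, and because the embedding is combinatorial, adjacent vertices of $\Gamma$ map to adjacent points of distinct factors. The real obstacle is the reduction from hyperbolic spaces to trees compatible with the singular boundary. I would handle it by working with the hyperbolic version of the theorem, where each maximal flat must split as a product of quasilines, one in each factor.
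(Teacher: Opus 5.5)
\textbf{The case $n=2$ has a genuine gap.} Your treatment of $n>2$ matches the paper. (1)$\Rightarrow$(2) is the hyperplane/Sageev construction of \cref{lem:colourable_isometric_embedding}, (1)$\Rightarrow$(3) is Rull's theorem, and (4)$\Rightarrow$(1) is \cref{rem:bg_hyp} applied directly; its proof already contains your final step of colouring $\partial_{\mathrm{sing}}$ of a product of trees by factor. For $n=2$, however, no branch-graded hypothesis is available. There you invoke \cite[Thm~K]{baderbensaidpetyt:quasiisometric:rigidity}, but that result concerns quasiisometric embeddings into $2$--dimensional CAT(0) cube complexes. Item (4) only gives a quasiisometric embedding into a product of two hyperbolic spaces, and neither of your reductions produces the required embedding. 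Tree approximation of finite subsets of a hyperbolic space has additive error growing with the number of points, so the constants are not uniform. Passing to asymptotic cones yields only a biLipschitz embedding into a product of $\R$--trees, to which a theorem about quasiisometric embeddings into cube complexes does not apply. Your fallback, \cref{rem:bg_hyp}, needs branch-gradedness, which for $n=2$ fails as soon as $\Gamma$ has a leaf (e.g.\ a $5$--cycle with a pendant vertex), and the statement does not exclude such graphs. So (4)$\Rightarrow$(1) is not established when $n=2$.

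\textbf{The missing idea is to reduce to a branch-graded induced subgraph.} If a triangle-free $\Gamma$ is not $2$--colourable, it contains an induced odd cycle $C$ of length at least five. Every vertex of $C$ has degree at least two in $\Gamma$, so it is an intersection of two maximal edges and hence branching. Therefore $C$ is directionally branch-complemented, and in particular branch-graded, in $\Gamma$. Applying \cref{rem:bg_hyp} to this subgraph (or to the isometrically embedded parabolic subgroup $A_C$) shows that $C$ is $2$--colourable, a contradiction. This is how the paper obtains \cref{cor:fb_into_products_of_trees}.

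\textbf{Two smaller points.} First, your first suggestion for (2)$\Rightarrow$(3) does not work. The trees in (2) are typically locally infinite, and equivariance gives no bound on valence. Once you get (3) from (1) via Rull, you must close the loop with (2)$\Rightarrow$(4), which is trivial because trees are hyperbolic; as written, (2) implies nothing in your scheme. Second, the trees from the hyperplane construction are not always locally infinite: a single edge gives two lines. This is harmless, since the statement only means that infinite valence is allowed.
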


In particular, products of arbitrary hyperbolic spaces are no better as receivers of quasiisometric embeddings than products of trivalent trees, at least in the branch-graded case. Moreover, if a branch-graded graph is not perfect (its chromatic number is greater than its clique number), then its associated right-angled Artin group cannot be quasiisometrically embedded in a product of dimension-many trees.

In view of items \ref{mit:2} and \ref{mit:3} in \cref{mthm:rull}, it is natural to ask when (the standard Cayley graph of)a right-angled Artin group can be \emph{isometrically} embedded into a product of finite-rank free groups, or more generally a product of locally-finite trees. We show in \cref{cor:ie_prod_free} that this can only be done in the obvious cases.

\begin{mthm} \label{mthm:ie}
A right-angled Artin group can be isometrically embedded in a finite product of locally finite trees if and only if it is a direct product of finite-rank free groups.
\end{mthm}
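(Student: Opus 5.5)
The easy direction is direct. If $A_\Gamma=F_{k_1}\times\dots\times F_{k_m}$ with standard generators, then its standard Cayley graph with the word metric is the $\ell^1$ product of the Cayley graphs of the $F_{k_i}$. Each of these is a $2k_i$--regular tree, so it is locally finite, and the identity gives an isometric embedding.

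For the converse, suppose $\phi\colon A_\Gamma\to T_1\times\dots\times T_m$ is an isometric embedding of the standard Cayley graph, where the $T_i$ are locally finite trees and the target carries the $\ell^1$ metric. I would extract the combinatorics of $\Gamma$ from the geometry of geodesics through a local analysis at a vertex.

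First, each geodesic edge of the Cayley graph maps to a geodesic of length one. Such a geodesic moves in exactly one factor, so every edge labelled by a generator $v$ at a group element $g$ receives a \emph{factor label} $i(g,v)\in\{1,\dots,m\}$.

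Next, I would show that labels are constant along cosets of generator lines. In $A_\Gamma$ the geodesic $g,gv,gv^2,\dots$ maps to an $\ell^1$ geodesic. I would also use the commuting squares $g,gv,gvw,gw$ for adjacent $v,w$. In an $\ell^1$ product of trees, a geodesic square of side one either has two sides in each of two distinct factors, or lies in a single factor. The second case is impossible, because trees contain no embedded 4-cycles. So commuting generators get different factor labels on each square, and opposite sides share a label. Thus the hyperplanes of the Salvetti complex's universal cover are each assigned a factor. Transversely crossing hyperplanes get different factors, so the labelling descends to a proper colouring of $\Gamma$ by at most $m$ colours, with each colour class an independent set.

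The crux, and the main obstacle, is showing that $\Gamma$ is in fact a join of the colour classes: any two vertices $v,w$ with different labels must commute. Suppose $v,w$ are non-adjacent and their hyperplanes at $1$ map to factors $i\neq j$. Consider the geodesic words $vw$ and $wv$. Since $vwv^{-1}w^{-1}\neq1$, the endpoints $vw$ and $wv$ are distinct in $A_\Gamma$, at word distance $4$. On the other hand, their images are computed coordinate-wise. In factor $i$ both words move one step along the image of $v$ at $1$ and at $w$ respectively, and in factor $j$ along the image of $w$. The image of $wv$ differs from that of $vw$ only if the edge labelled $v$ at $w$ goes in a different direction in $T_i$ from the edge labelled $v$ at $1$.

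I would use a counting argument with local finiteness to force a contradiction. Consider the elements $v w^k v'$ for all $k$ and $v'\in\{v^{\pm1}\}$, or more generally the ball of radius $r$ in the subgroup $\langle v,w\rangle\cong F_2$, whose hyperplanes all map into factors $i$ and $j$. This ball grows exponentially, like $3^r$, and must embed isometrically into $T_i\times T_j$. The words it contains use $v$ and $w$ alternately, so each coordinate path is forced to backtrack-free geodesics whose length is controlled. With $T_i$ and $T_j$ locally finite, the image of the $r$-ball lies in a product of balls of radius $r$. However, isometry forces distinct syllable patterns to have distinct images, and restricting to paths of total length $r$ that split as $a$ steps in $T_i$ and $r-a$ in $T_j$ does not contradict growth directly.

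So instead I would argue that the map $F_2\to T_i\times T_j$ is an isometric embedding of a tree into a 2-dimensional product. There, a geodesic $v^a w^b v^c$ has its $T_i$-part of length $|a|+|c|$ with no cancellation. Comparing with $v^{a+c}w^b$, whose images agree in $T_j$ and whose $T_i$-projections both have length $|a|+|c|$, the tripod structure forces the distance to be at most $2\min$ over the factors, while the word distance is $2|b|+\dots$. Choosing $a,c$ small and $b$ large violates isometry. This yields $v,w$ adjacent, so $\Gamma$ is the join of $m$ edgeless graphs and $A_\Gamma$ is a product of free groups, each of finite rank since $\Gamma$ is finite.
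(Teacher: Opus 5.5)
\textbf{There is a genuine gap.} The proposal breaks down at two points: your factor labelling does not give a colouring of $\Gamma$, and the crux claim is false.

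\textbf{The colouring is not well defined.} Your square argument is correct, but it only shows two things: the factor label is constant on each hyperplane of $X_\Gamma$, and crossing hyperplanes get different labels. It says nothing about two different hyperplanes of the same type, such as those dual to $(1,v)$ and $(v,v^2)$, or to $(1,v)$ and $(w,wv)$ with $w$ not commuting with $v$. These labels really can differ. For example, $n\mapsto(\lceil n/2\rceil,\lfloor n/2\rfloor)$ is an isometric embedding of $\Z$ into the $\ell^1$ product of two lines, and its labels alternate along a single generator line. The paper's proof of \cref{prop:no_ie_in_finite_valence} in fact exploits exactly this freedom: it shows that the $a$-edges at $1$, at $b^{k_1}c$, at $b^{k_1}cb^{k_2}c$, \dots\ lie in pairwise distinct factors.

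\textbf{The crux claim is false.} Take $F_2=\langle v,w\rangle$ with Cayley tree $T$. Let $T_1$ be $T$ with the edge $(1,w)$ collapsed, let $T_2$ be a single edge, and send $g$ to its image in $T_1$ together with the side of $(1,w)$ on which it lies. This is an isometric embedding into a product of two locally finite trees in which $v,w$ are non-adjacent and the $v$- and $w$-edges at $1$ lie in different factors. More basically, $F_2$ embeds isometrically in a single locally finite tree. So no argument about a non-adjacent pair alone can work; the obstruction must involve an edge together with a vertex adjacent to neither endpoint, i.e.\ a parabolic $\Z^2*\Z$.

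\textbf{The final computation also fails.} Grant that all $v$-edges lie in $T_i$ and all $w$-edges in $T_j$. The $T_j$-images of $v^aw^b$ and $v^{a+c}w^b$ are endpoints of two $w$-geodesics issuing from the same point $\phi_j(1)$. Isometry forces these to diverge immediately, not to agree, and the distances then match exactly. Your argument never uses local finiteness, yet it must. By \cref{lem:colourable_isometric_embedding}, $F_2$ embeds isometrically in a product of two locally infinite trees with all $v$-edges in one factor and all $w$-edges in the other, which is precisely the configuration you call contradictory. Under your global-labelling assumption, the correct conclusion is that the $w$-rays at the points $v^k$ leave $\phi_j(1)$ in pairwise distinct directions. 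Only local finiteness can contradict that, and the global labelling is itself unjustified anyway.

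\textbf{What is missing is the paper's two-step route.}
\begin{enumerate}
\item \cref{lem:tree_flats_parabolic}: if $\Gamma$ has no induced edge-plus-vertex, then non-adjacency is an equivalence relation, so $\Gamma$ is a join of edgeless graphs.
\item An iterative local-finiteness argument for $\Z^2*\Z=\langle a,b,c\rangle$. Commutation with $a$ forces every edge of $\langle b\rangle$ out of the factor $T_{i_1}$ of $(1,a)$, so the $T_{i_1}$-projection is constant along $\langle b\rangle$. Local finiteness of $T_{i_1}$, together with the fact that an image geodesic cannot cross a hyperplane of the product twice, pushes infinitely many edges $(b^k,b^kc)$ into another factor. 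The same two facts yield some $k_1$ such that the $a$- and $b$-rays at $b^{k_1}c$ avoid $T_{i_1}$, so the $a$-edge there lies in a new factor. Iterating produces infinitely many factors.
\end{enumerate}
You also need to justify that $\Gamma$ is finite. Otherwise $A_\Gamma$ is locally infinite, which an isometric embedding into a finite product of locally finite trees rules out.
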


We also consider \emph{stable} quasiisometric embeddings of right-angled Artin groups. A \emph{standard geodesic} in $A_\Gamma$ is just a coset of the cyclic subgroup generated by a single standard generator. A quasiisometric embedding between right-angled Artin groups is \emph{stable} if it maps standard geodesics within uniformly finite Hausdorff distance of standard geodesics. These embeddings are particularly nice, because they induce maps of \emph{extension graphs} \cite[Prop.~6.2]{baderbensaidpetyt:quasiisometric:rigidity}.
By the nature of the Alice's diary construction, Rull's embeddings are almost never stable \cite{rull:embedding}, and we show in \cref{cor:no_stable_Rull} that this is necessary.

\begin{mthm} \label{mthm:anti-rull}
A right-angled Artin group $A_\Gamma$ admits a stable quasiisometric embedding in a finite product of finite-rank free groups only if $A_\Gamma$ is a direct product of finite-rank free groups.
\end{mthm}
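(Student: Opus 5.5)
Suppose $q\colon A_\Gamma\to \prod_{i=1}^m F_{k_i}$ is a stable quasiisometric embedding; we want to show that $\Gamma$ is a join of edgeless graphs. The strategy is to pass to extension graphs, where stability gives a map $\Gamma^e\to(\Gamma')^e$ by \cite[Prop.~6.2]{baderbensaidpetyt:quasiisometric:rigidity}, and then exploit the very rigid structure of the target. Here $\Gamma'$ is the join of $m$ edgeless graphs with $k_i$ vertices, so the product of free groups is the right-angled Artin group $A_{\Gamma'}$. The extension graph $(\Gamma')^e$ is then a join of $m$ infinite edgeless graphs. Its vertices are conjugates of standard generators, i.e.\ axes of standard geodesics, and two vertices are adjacent exactly when they lie in different factors.

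The first step is to extract a graph map $\phi\colon\Gamma^e\to(\Gamma')^e$ from stability. A standard geodesic $g\sgen{v}$ goes to within bounded Hausdorff distance of a standard geodesic of the product, and commuting generators span flats. Quasiflats of rank two must map to quasiflats, so adjacent vertices are sent to adjacent vertices. In particular $\phi$ never sends two adjacent vertices into the same factor.

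Next we need a rigidity input: two standard geodesics of $A_\Gamma$ that are sent to parallel geodesics must themselves be at finite Hausdorff distance, because $q$ is a quasiisometric embedding. Hence $\phi$ is injective on parallelism classes, i.e.\ on vertices of $\Gamma^e$.

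The key combinatorial step then uses the "colouring" $c\colon V(\Gamma^e)\to\{1,\dots,m\}$ recording which factor a vertex maps to. Non-adjacency in the image occurs only within a single factor, and distinct factors are always adjacent. I would aim to show that non-adjacent vertices $u,w$ of $\Gamma$ receive the same colour. Suppose instead $c(u)\ne c(w)$ with $u,w$ non-adjacent. Then $\sgen{u,w}\cong F_2$ is mapped coarsely into the product of two free factors, which is a direct product. Consider the standard geodesics $g\sgen{u}$ for $g\in\sgen{w}$: their images are all parallel to lines in factor $c(u)$. On the other hand, translating by $w^n$ in $A_\Gamma$ moves them arbitrarily far apart in $\sgen{u,w}$ while keeping them non-parallel. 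Since $\sgen{w}$ is sent along factor $c(w)\neq c(u)$, the images of $w^n\sgen{u}$ differ only by displacements in factor $c(w)$, so their projections to factor $c(u)$ stay bounded. With a pigeonhole argument over the locally finite factor $F_{k_{c(u)}}$, this forces two of them to be parallel in the product. That contradicts the injectivity established above.

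With non-adjacency implying equal colour and adjacency implying different colour, we conclude: the complement graph of $\Gamma$ has connected components that are monochromatic, and vertices in different components are adjacent. Hence $\Gamma$ is a join of edgeless graphs, i.e.\ $A_\Gamma$ is a product of free groups (finite rank since $\Gamma$ is finite).

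I expect the main obstacle to be the step where $c(u)\ne c(w)$ is ruled out. Making precise that the images of $w^n\sgen{u}$ have bounded projection to factor $c(u)$ requires controlling the coarse constants. I would first try to obtain this from the fact that $q$ restricted to the flat-free subgroup $\sgen{u,w}$ must avoid quasiflats, combined with the fact that in a product of trees a quasiline in the $c(w)$ direction fellow-travels a factor line.
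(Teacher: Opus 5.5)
There is a genuine gap, and it is in your key step. The claim that non-adjacent vertices $u,w$ of $\Gamma$ receive the same colour is false, so it cannot be proved. Take $A_\Gamma=F_2=\langle u,w\rangle$ and define $q\colon F_2\to F_2\times F_2$ by $q(g)=(g,1)$ if the reduced word for $g$ is empty or begins with $u^{\pm1}$, and $q(g)=(1,g)$ if it begins with $w^{\pm1}$. The two halves of the Cayley tree meet only at $1$, so $d(g,h)=|g|+|h|=d(q(g),q(h))$ for $g,h$ in different halves; hence $q$ is an isometric embedding. Every standard geodesic lies in a single half, so $q$ maps it onto a standard geodesic. Yet $\langle u\rangle$ goes to the first factor and $\langle w\rangle$ to the second. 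Taking the product with $\mathrm{id}_{\mathbb Z}$ gives the same phenomenon for the connected graph with edges $\{u,x\}$ and $\{x,w\}$.

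The faulty sentence is ``their images are all parallel to lines in factor $c(u)$''. Your colouring is a function on $\Gamma^{\ext}$, and since $u$ and $w$ do not commute, the vertices $w^n\cdot u$ are distinct from $u$ and need not have colour $c(u)$. In the example, $c(w^n\cdot u)=c(w)$ for every $n\neq0$. What your pigeonhole argument actually proves is only that $c(w^n\cdot u)=c(w)$ for all but boundedly many $n$, and that is no contradiction.

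Your first steps are fine: adjacent vertices get different colours, and the induced map is injective on $\Gamma^{\ext}$. The missing idea is to run the pigeonhole over a two-dimensional family, which is what the paper does in Theorem~\ref{thm:anti-rull2}.

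Take an edge $\{a,b\}$ and a vertex $s$ adjacent to neither endpoint. Because $a,b$ commute, $b^m\cdot a=a$ and $a^k\cdot b=b$ in $\Gamma^{\ext}$. So the colour genuinely is constant along the lines $b^m\langle a\rangle$ and $a^k\langle b\rangle$, and $c(a)\neq c(b)$.

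Fix $m$. The points $q(a^kb^m)$, $k\in\mathbb Z$, lie near a single line in factor $c(a)$. The lines $a^kb^m\langle s\rangle$ are pairwise non-parallel. Local finiteness of the factors plus injectivity then shows that at most $N_1$ values of $k$ have $c(a^kb^m\cdot s)\neq c(a)$, with $N_1$ independent of $m$. Symmetrically, for each $k$ at most $N_1$ values of $m$ have $c(a^kb^m\cdot s)\neq c(b)$.

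Counting in an $N\times N$ box with $N>2N_1$ produces some $(k,m)$ with $c(a^kb^m\cdot s)=c(a)=c(b)$, a contradiction. The paper's ``points hit in the same way'' argument does the same thing and works for arbitrary join factors $\Lambda_i$.

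Hence every edge of $\Gamma$ is spinal, i.e.\ $\Gamma$ has no induced subgraph consisting of an edge plus an isolated vertex. Non-adjacency is therefore transitive, and $\Gamma$ is a join of edgeless graphs, as in Lemma~\ref{lem:tree_flats_parabolic} and the end of the proof of Corollary~\ref{cor:no_stable_Rull}.
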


This can be viewed as an instance of a more general phenomenon, where right-angled Artin groups that are not ``product-like'' can only be quasiisometrically embedded in a product in a fairly degenerate way. The following is \cref{cor:stable_irreducible}.

\begin{mthm} \label{mthm:stable_irreducible}
Let $\Gamma$ be a connected graph with diameter at least four, and let $\Lambda$ be a finite graph. If $f:A_\Gamma\to A_\Lambda$ is a stable quasiisometric embedding, then $f$ induces a combinatorial embedding $\Gamma^\ext\to\Lambda_i^\ext$ for some irreducible $\Lambda_i$ in the join decomposition of $\Lambda$.
\end{mthm}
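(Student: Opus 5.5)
We will use the fact cited above from \cite[Prop.~6.2]{baderbensaidpetyt:quasiisometric:rigidity}: a stable quasiisometric embedding $f:A_\Gamma\to A_\Lambda$ induces a map $f_*:\Gamma^\ext\to\Lambda^\ext$ of extension graphs. The map is defined by sending the vertex of $\Gamma^\ext$ corresponding to a parallelism class of standard geodesics to the parallelism class of the standard geodesic that its image fellow-travels. It preserves adjacency, since commuting conjugates of generators span flats, and flats are sent near flats. It is injective, and non-adjacent vertices go to non-adjacent vertices. Write $\Lambda=\Lambda_1*\dots*\Lambda_k$ for the join decomposition, with each $\Lambda_i$ irreducible, possibly including a clique part split into single vertices. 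Then $A_\Lambda=\prod A_{\Lambda_i}$, and $\Lambda^\ext$ is the join $\Lambda_1^\ext*\dots*\Lambda_k^\ext$. The plan is to show that the image of $f_*$ lies in a single join factor $\Lambda_i^\ext$. Restricting $f_*$ to that factor then gives the desired combinatorial embedding.

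The first step is to transfer metric information. The graph $\Gamma$ is connected with $\diam\Gamma\ge4$, so $\Gamma$ is not a join. Hence $\Gamma^\ext$ is connected, and (by Kim--Koberda) it has infinite diameter, or at least contains pairs of vertices at distance at least $3$. Any two vertices of $\Lambda^\ext$ lying in different join factors are adjacent, and any two vertices in the same factor are at distance at most $2$ via a vertex of another factor, when $k\ge2$. So $\Lambda^\ext$ has diameter at most $2$ unless $k=1$. The case $k=1$ is trivial, so assume $k\ge2$.

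The second step is to see what non-adjacency preservation forces. Suppose vertices $u,v\in\Gamma^\ext$ are non-adjacent. Then $f_*(u)$ and $f_*(v)$ are non-adjacent in $\Lambda^\ext$, and by the join structure they must lie in the same factor $\Lambda_i^\ext$. The \emph{non-adjacency graph} (complement graph) of $\Gamma^\ext$ is therefore mapped so that each of its connected components lands in a single factor. It remains to show that the complement of $\Gamma^\ext$ is connected. This follows because $\Gamma^\ext$ is not a join, but we prove it directly. Given adjacent $u,v$, we seek a vertex $w$ non-adjacent to both. Using $\diam\Gamma\ge4$, pick a standard generator at distance at least $2$ from the projections of both $u$ and $v$ to $\Gamma$. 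This is possible since the combined link of two adjacent vertices has radius at most $2$ from an edge, and diameter $4$ leaves room. A suitable conjugate of that generator, for instance conjugated by a high power of some element, is then non-commuting with both $u$ and $v$. Thus $u$ and $v$ are joined by a path of length $2$ in the complement. Non-adjacent pairs are already complement-adjacent, so the complement is connected. Hence the image lies entirely in one $\Lambda_i^\ext$.

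The main obstacle is the existence of $w$ in the extension graph, that is, showing that for adjacent $u=x^g$, $v=y^h$ there is a conjugate generator commuting with neither. I would first use the conjugation action to move to the case $u=x$ and $v=y^h$. Then I would choose $z\in\Gamma$ at distance at least $2$ from both $x$ and the projection of $y$; this uses diameter at least $4$, since the closed neighbourhoods of an edge-adjacent pair cannot cover $\Gamma$. Finally, I would take $w=z^{t}$ for a generic $t$, using the centraliser description in RAAGs, and check non-commutation via normal forms. A secondary point is that $f_*$ being injective and non-adjacency preserving must be confirmed from \cite[Prop.~6.2]{baderbensaidpetyt:quasiisometric:rigidity}. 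Injectivity is what makes the restriction a combinatorial embedding, and non-adjacency preservation comes from quasi-flats not being coarsely contained in lower-rank pieces.
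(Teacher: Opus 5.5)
There is a genuine gap in your second step: the claim that $f_*$ sends non-adjacent vertices of $\Gamma^\ext$ to non-adjacent vertices of $\Lambda^\ext$. \Cref{prop:stable_implies_induced} gives only an injective, adjacency-preserving map. That is all ``combinatorial embedding'' means in this paper; compare the proof of \cref{thm:beyond_FB_induced_embedding_singboundary}, where it is verified using injectivity and cliques-to-orthants alone. The stronger property is false for stable quasiisometric embeddings. For example, write $g\in F(a,b)$ in reduced syllable form $g=s_1\cdots s_r$, and set $\psi(g)=\sigma(s_1)\,c\,\sigma(s_2)\,c\cdots c\,\sigma(s_r)\in\sgen{a',b'}*\sgen{c}\cong\Z^2*\Z$, where $\sigma(a^k)=a'^k$ and $\sigma(b^k)=b'^k$. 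Free-product normal forms give $d(g,h)\le d(\psi(g),\psi(h))\le 2d(g,h)$. Every standard geodesic is sent within Hausdorff distance $1$ of a standard geodesic, so $\psi$ is stable. Yet the non-adjacent vertices $a,b$ of $\Gamma^\ext$ go to the adjacent vertices $a',b'$. Your heuristic about quasiflats and lower-rank pieces does not rule this out.

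A related warning sign is that you never use finiteness of $\Lambda$, and the statement genuinely needs it. Take $\Gamma$ to be the path $v_1\cdots v_5$ with its proper $2$--colouring. The isometric embedding of \cref{lem:colourable_isometric_embedding} lands in a product of two countable-valence trees. Identify each tree with a Cayley graph of $F_\infty$ so that images of standard geodesics are standard; you can do this by labelling those lines greedily outward from a root. The result is a stable embedding into $A_{\Lambda_1*\Lambda_2}$ with $\Lambda_i$ infinite and edgeless. Here $v_1$ and $v_4$ are non-adjacent but their images lie in different factors, so they are adjacent, and the conclusion fails. Your complement-connectivity step is not the problem.

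What is actually needed is the dual statement: \emph{adjacent} vertices go to the same factor. This is where the geometry and the finiteness of $\Lambda$ enter. The paper proves in \cref{thm:anti-rull2} that every non-spinal standard $2$--flat $F$ is sent near a $2$--flat coming from a single $\Lambda_i$. Suppose instead the image flat were spanned by $t_1\in\Lambda_1$ and $t_2\in\Lambda_2$. Pick a vertex $v$, with generator $s$, not adjacent to the edge. The geodesics $g\sgen s$ with $g\in F$ are pairwise non-parallel. But pigeonholing over the finitely many words of length at most $K$ and the finitely many generator types of $A_\Lambda$ produces distinct $g,g'\in F$ with image geodesics $yw\sgen t$ and $yt_1^Nw\sgen t$, where $t\notin\Lambda_1$ after possibly swapping $t_1,t_2$. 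These are parallel because $A_{\Lambda_1}$ commutes with the other factors. This contradicts injectivity of $f_*$, not non-adjacency. Since $\diam\Gamma\ge4$, no edge of $\Gamma$ is spinal, so every edge of $\Gamma^\ext$ lands in a single $\Lambda_i^\ext$. Connectivity of $\Gamma^\ext$ itself, rather than of its complement, then forces a single $i$.
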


Here $\Gamma^\ext$ denotes the \emph{extension graph} of $\Gamma$, which is a graph capturing the relations between standard geodesics in $A_\Gamma$ \cite{kimkoberda:embedability}. The theorem \emph{almost} says that $f(A_\Gamma)$ is contained in a single direct factor of $A_\Lambda$, but it is not possible to conclude this in general; see \cref{rem:not_into_factor}.

It is noteworthy that Theorems~\ref{mthm:ie}, \ref{mthm:anti-rull}, and~\ref{mthm:stable_irreducible} do not require the domain and codomain to have the same dimension.

\subsection*{Outline of the article}

\begin{itemize}
\item   \cref{sec:prelims} covers background on CAT(0) cube complexes and right-angled Artin groups.
\item   The branch-graded condition is introduced in \cref{sec:beyond_FB_2}, where we also prove \cref{mthm:bg_induced}.
\item   \cref{sec:embeddings_into_trees} is about quasiisometric embeddings into products of trees. It uses the results of \cref{sec:beyond_FB_2} to prove \cref{mthm:rull}.
\item   In \cref{sec:isometric_embeddings} we consider isometric embeddings into products of locally-finite trees and prove \cref{mthm:ie}.
\item   Finally, \cref{antirull} is about stable quasiisometric embeddings. This is where Theorems~\ref{mthm:anti-rull} and~\ref{mthm:stable_irreducible} are proved.
\end{itemize}

\subsection*{Acknowledgements}
OB acknowledges support from the FWO and F.R.S.-FNRS under the Excellence of Science (EOS) programme (project ID 40007542). SB and HP thank the Isaac Newton Institute for their hospitality during the programme \emph{Operators, Graphs, Groups}, where some of the work on this paper took place (EPSRC grant EP/Z000580/1).
We also thank Jack Button for useful comments on an earlier version.

There was no use of AI in the production of this paper.

\section{Preliminaries} \label{sec:prelims}

\subsection{CAT(0) spaces}\label{sec:CAT(0)_spaces}

We shall use only a limited amount of CAT(0) geometry in this paper. We refer the reader to \cite{ballmann:lectures} and \cite{bridsonhaefliger:metric} for background on the subject. The main class of CAT(0) spaces we are concerned with in this paper are CAT(0) cube complexes (see \cref{subsec:ccc}) and more specifically, universal covers of Salvetti complexes. 

\begin{definition}[Flats, orthants]\label{def:flats_in_CAT(0)}
Let $(X,d)$ be a CAT(0) space. A \emph{$k$--flat} in $X$ is the image of an isometric embedding $(\mathbb R^k,\|\cdot\|_2)\to (X,d)$.
A \emph{$k$--orthant} in a CAT(0) space $X$ is the image of an isometric embedding of $([0,\infty)^k,\|\cdot\|_2)$ into $X$.
\end{definition}

\begin{definition}[Parallels] \label{def:parallel_set}
    Two flats $F$ and $F'$ in a complete CAT(0) space $X$ are \emph{parallel} if they are at finite Hausdorff distance from one another. Equivalently, the distance from $F$ to $F'$ and the distance from $F'$ to $F$ are constant. 
    %
\end{definition}

\begin{definition}[Tits boundary]
We denote by $\partial_TX$ the \emph{Tits boundary} of a CAT(0) space $X$, and equip it with the \emph{angular} metric. As a set, $\partial_TX$ is the set of equivalence classes of geodesic rays in $X$, where two rays are equivalent whenever they are at finite Hausdorff distance from one another. The angle between two elements of $\partial_TX$ is the supremal angle formed by two representative rays with the same basepoint.
\end{definition}

If a subset $A$ of a CAT(0) space lies at finite Hausdorff distance from a finite union of orthants, then we denote by $\partial_T A$ the union of the Tits boundaries of those orthants. 




\subsection{CAT(0) cube complexes} \label{subsec:ccc}

We refer the reader to \cite{wise:structure,bowditch:median:book,genevois:algebraic} for basic facts about CAT(0) cube complexes. 

A \emph{cube complex} is a cell complex obtained by gluing Euclidean unit cubes $[0,1]^n$ along isometric faces. A CAT(0) cube complex is a simply connected cube complex such that the link of each vertex is a flag simplicial complex. The \emph{dimension} of a CAT(0) cube complex is the supremal dimension of the cubes involved in its construction. 

A key combinatorial feature of CAT(0) cube complexes is their \emph{hyperplanes}. For each edge $e$ of $X$, the hyperplane \emph{dual} to $e$ can be defined as an equivalence class of edges: set $e_1\sim e_2$ whenever they are opposite edges in a square and extend transitively. Each hyperplane $h$ separates $X$ into two \emph{halfspaces}. We say that $h$ \emph{separates} two points of $X$ if they lie in different halfspaces. Two hyperplanes \emph{cross} if all four possible intersections of halfspaces are nonempty.

If $X$ is a CAT(0) cube complex, then the length metric $d_2$ obtained by equipping each cube with the $\ell^2$ metric makes $X$ into a CAT(0) space. If one instead equips the cubes with the $\ell^1$ metric, then one obtains a length metric $d_1$ on $X$. For vertices $x,y\in X$, the distance $d_1(x,y)$ is equal to the number of hyperplanes that separate $x$ from $y$. If $X$ is finite-dimensional, then $d_1$ and $d_2$ are bilipschitz equivalent. In this paper, we shall almost always consider CAT(0) cube complexes as being equipped with the metric $d_1$. The exception is when we are discussing flats.

\begin{definition}[Singular] \label{def:ccc_singular}
Let $X$ be a CAT(0) cube complex. A $k$--flat $F\subset X$ is \emph{singular} if it is contained in the $k$--skeleton of $X$. A \emph{singular geodesic} is a singular $1$--flat. A \emph{singular geodesic ray} is a subset of the 1--skeleton that is the image of an isometric embedding $[0,\infty)\to(X,d_2)$. 
\end{definition}


The following notion of boundary was introduced in \cite[Def.~11.1]{baderbensaidpetyt:from}. 

\begin{definition}[Singular boundary graph] \label{def:singular_boundary_graph}
Let $X$ be a finite-dimensional CAT(0) cube complex. The \emph{singular boundary graph} of $X$, denoted $\partial_{\mathrm{sing}}X$, is the following graph. The vertices of $\partial_{\mathrm{sing}}X$ are the equivalence classes of singular geodesic rays in $X$, where geodesic rays are deemed equivalent if they lie at finite Hausdorff distance from one another. Two vertices of $\partial_{\mathrm{sing}}X$ are joined by an edge whenever they admit singular geodesic representatives that span a 2--orthant of $X$. By \cite[Lem.~4.6]{baderbensaidpetyt:from}, this is equivalent to them having angle $\frac\pi2$ in $\partial_TX$.
\end{definition}

\begin{example}\label{ex:stairwase_and_sing_bound_product_trees}
If $T$ is a tree, then $\partial_{\mathrm{sing}} T$ is the edgeless graph whose vertex set is $\partial_T T$. More generally, if $X=T_1\times\cdots\times T_n$ is a product of trees, then $\partial_{\mathrm{sing}}X$ is the complete $n$-partite graph whose parts are the sets $\partial_T T_1,\dots,\partial_T T_n$. In particular, $\partial_{\mathrm{sing}}X$ is $n$--colourable.
\end{example}

See \cite[\S11]{baderbensaidpetyt:from} for more discussion of the singular boundary graph and a comparison to other notions of boundary for CAT(0) cube complexes.


\subsection{Right-angled Artin groups} \label{sec:raags}

A \emph{right-angled Artin group} (RAAG) is a group $A$ with a generating set $S$ such that the only relations are that some elements of $S$ commute. This can be encoded by the (simplicial) graph $\Gamma$ that has vertex set $S$ and an edge between vertices whose corresponding generators commute. We write $A=A_\Gamma$, and refer to $S$ as the \emph{standard} generating set of $A_\Gamma$. 

\begin{remark}
Unless otherwise specified, a right-angled Artin group need not be finitely generated.
\end{remark}

\bsh{The space $X_\Gamma$} \label{sh:the_space_X_Gamma}
The Cayley graph of $A_\Gamma$ with respect to the generating set $S$ is the 1--skeleton of a CAT(0) cube complex $X_\Gamma$. The quotient of $X_\Gamma$ by the action of $A_\Gamma$ is known as the \emph{Salvetti complex} of $A_\Gamma$. 

The dimension of $X_\Gamma$ is equal to the clique number of $\Gamma$, denoted $\omega_\Gamma$. If $\omega_\Gamma<\infty$, then $X_\Gamma$ is quasiisometric to the standard Cayley graph $A_\Gamma$, its 1--skeleton. Hence for most purposes we will not distinguish the two, and just write $A_\Gamma$. For instance, the \emph{dimension} of $A_\Gamma$ is the dimension of $X_\Gamma$. But when thinking of the space as a CAT(0) object we will consider $X_\Gamma$. 
\esh

In view of the above, when making a metric statement about a right-angled Artin group $A_\Gamma$ where $\Gamma$ is infinite, we are referring to the standard Cayley graph of $A_\Gamma$.


\begin{definition}[Standard]\label{def:standard}
Each edge of $X_\Gamma$ is labelled by some standard generator in $S$. A geodesic in the 1--skeleton of $X_\Gamma$ is \emph{standard} if all its edges have the same label. Identifying with $A_\Gamma$, we denote the standard geodesic labelled by $s$ and going through $g$ by $g\sgen s$. 

For $k>1$, a $k$--flat in $X_\Gamma$ is \emph{standard} if it is singular and contains $k$ pairwise non-parallel standard geodesics. Standard flats correspond to cliques in $\Gamma$.
\end{definition}

We build on the branching conditions introduced in \cite[\S3]{baderbensaidpetyt:quasiisometric:rigidity}. Recall that a clique in a graph is \emph{maximal} if it is not contained in any larger clique.

\begin{definition}[(Directionally) branch-complemented] \label{def:fully_branching_in_graphs}
Let $\Gamma$ be a (simplicial) graph with clique number $n \geq 2$.
\begin{enumerate}
\item A clique is said to be \emph{branching} if it is the intersection of some maximal cliques, one of which is an $n$--clique.
\item A $k$--clique $W$ is called \emph{branch-complemented} if it is contained in an $n$--clique $K$ such that both $\{W\}$ and $K\ssm\{W\}$ are branching. 
We call the clique $K\ssm W$ a \emph{complementary clique} of $W$.
\item A subgraph of $\Gamma$ is \emph{directionally branch-complemented} if each of its vertices is branch-complemented.
\end{enumerate}
\end{definition}




\subsection{Asymptotic cones}

Asymptotic cones provide a way to view a space ``from infinity'', which removes coarseness. They will be used as a technical tool in \cref{sec:beyond_FB_2}. We refer the reader to \cite{vandendrieswilkie:gromov} for a good introductory discussion.

Let $\omega$ be a nonprincipal ultrafilter on $\N$. Given a sequence $(x_n)$ in $\R$, if there is some $x\in\R$ such that $\{n\,:\,|x_n-x|<\eps\}\in\omega$ for all $\eps>0$, then we write $x=\lim_\omega(x_n)$.

\begin{definition}[Asymptotic cone] \label{def:asymptotic_cone}
Let $(X,\dist)$ be a metric space and fix a basepoint $o\in X$. Let $(\lambda_n)$ be a divergent sequence of positive numbers. Let $Y$ be the set of all sequences $(x_n)$ in $X$ such that $\lim_\omega(\frac1{\lambda_n}\dist(x_n,o))$ exists. The function $\hat\dist((x_n),y_n))=\lim_\omega(\frac1{\lambda_n}\dist(x_n,y_n))$ is a pseudometric on $Y$. The \emph{asymptotic cone} $\lim_\omega(X,(\lambda_n),o)$ of $X$ is the metric quotient of $Y$. We often abuse notation and simply write $\hat X=\lim_\omega(X,(\lambda_n),o)$.

The \emph{ultralimit} of $A\subset X$ is the subset $\hat A\subset\hat X$ consisting of all points that can be represented by a sequence $(a_n)$ with $a_n\in A$ for all $n$. The \emph{cone point} of $\hat X$ is the point $\hat o$.
\end{definition}

If $f:X\to Y$ is a quasiisometric embedding of metric spaces, then for every $o\in X$ and every divergent sequence $(\lambda_n)$ of positive numbers there is an induced biLipschitz embedding $\hat f:\hat X\to\hat Y$, where $\hat X=\lim_\omega(X,(\lambda_n),o)$ and $\hat Y=\lim_\omega(Y,(\lambda_n),f(o))$.

The following technical proposition specialises results from \cite{baderbensaidpetyt:from} to standard flats in RAAGs. The \emph{asymptotic rank} of a CAT(0) cube complex $Y$ is the supremal $n$ such that $Y$ contains an isometric copy of $[0,m]^n$ for all $m\in\mathbb N$ (see \cite[Prop.~2.18]{baderbensaidpetyt:from}). It is clearly bounded above by the dimension of $Y$, which makes the following more general than assuming that $Y$ has dimension $n$. 

\begin{proposition}\label{prop:branch_complemented_splitting_cone}
Let $\Gamma$ be a graph with clique number $n$ and let $Y$ be a finite-dimensional CAT(0) cube complex with asymptotic rank $n$. Let $\hat f:\widehat{X_\Gamma}\to\hat Y$ be a biLipschitz embedding of asymptotic cones. Suppose that $C=B\sqcup K\subseteq\Gamma$ is a clique such that $B\ne\varnothing$ is directionally branch-complemented and $K\ne\varnothing$ is branching. Write $F_C,F_B,F_K\subseteq X_\Gamma$ for the corresponding standard flats through the identity. 
    
    The image $\hat f(\widehat{F_B})$ is a singular $|B|$--flat, and $\hat f(\widehat{F_C})$ splits isometrically as
    $$ \hat f(\widehat{F_C}) \,=\, \hat f(\widehat{F_B})\times\hat f(\widehat{F_K}).$$
    In particular, if $O_B\subseteq\hat f(\widehat{F_B})$ and $O_K\subseteq\hat f(\widehat{F_K})$ are singular $p$-- and $q$-- orthants based at the cone point of $\hat Y$, then their axis-rays together span a singular $(p+q)$--orthant in $\hat f(\widehat{F_C})$.
\end{proposition}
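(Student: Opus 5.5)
This proposition is described as a specialisation of results of \cite{baderbensaidpetyt:from}, so the plan is to reduce it to the splitting and singularity statements there. The inputs needed are: biLipschitz images of top-dimensional flats in $\hat Y$ are finite unions of orthants, and branching detects which pieces are singular and which split. First, recall that $\hat Y$ is a complete median space of rank $n$, since $Y$ is finite-dimensional with asymptotic rank $n$. Since $C$ is contained in an $n$--clique $C'$ (choose one witnessing the branching of $K$, or the complement witnessing branch-complementedness), $\hat f(\widehat{F_{C'}})$ is a biLipschitz $n$--flat in a rank-$n$ median space. By the structure results of \cite{baderbensaidpetyt:from}, such an image is a finite union of $n$--orthants whose Tits boundary is a union of Weyl-type chambers.

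Next, I would treat the vertices of $B$ one at a time. Let $b\in B$ be branch-complemented, with $n$--clique $L\ni b$ such that $\{b\}$ and $L\ssm\{b\}$ are both branching. Branching of $\{b\}$ means that the standard line $\langle b\rangle$ is the intersection of several maximal standard flats. In the cone, the image $\hat f(\widehat{\langle b\rangle})$ is therefore the intersection of the images of those flats, one of which is top-dimensional. The key lemma from \cite{baderbensaidpetyt:from} that I would invoke is that such an intersection in a rank-$n$ median space must be a union of singular pieces, which forces $\hat f(\widehat{\langle b\rangle})$ to be a singular line. The companion branching of the complement $L\ssm\{b\}$ then gives a product splitting $\hat f(\widehat{F_L})=\hat f(\widehat{\langle b\rangle})\times \hat f(\widehat{F_{L\ssm b}})$.

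Iterating over the vertices of $B$ shows that the singular lines $\hat f(\widehat{\langle b\rangle})$ for $b\in B$ pairwise span singular quarter-planes. Using the median structure (convex hulls of pairwise orthogonal singular lines in a median space split as products), I would conclude that $\hat f(\widehat{F_B})$ is a singular $|B|$--flat. Then I would use that $K$ is branching together with the singular directions of $B$ to split off $\hat f(\widehat{F_B})$ as a factor of $\hat f(\widehat{F_C})$, in the same way as in the single-vertex case. Concretely, the projections of $\hat f(\widehat{F_C})$ onto the singular line factors are $1$--Lipschitz, and the fibre over the cone point is $\hat f(\widehat{F_K})$.

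The final claim about orthants is then immediate from the product structure. Axis rays of $O_B$ lie in the first factor and those of $O_K$ in the second, so they span $O_B\times O_K$, which is a singular $(p+q)$--orthant.

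I expect the main obstacle to be the splitting step, namely showing that the product decomposition holds isometrically and not just coarsely or Lipschitz-wise. To address it, I would first try to show that every hyperplane-type wall of the median space $\hat Y$ crossing $\hat f(\widehat{F_B})$ crosses the whole of $\hat f(\widehat{F_C})$ transversally to $\hat f(\widehat{F_K})$, by exploiting rank $n$ to rule out any extra crossings.
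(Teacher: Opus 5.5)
Your outline has the right shape (singularity for the $B$--directions, a finite union of singular orthants for $K$, then a splitting). However, both inputs are obtained incorrectly, and you miss the ingredient the paper uses to make the results of \cite{baderbensaidpetyt:from} applicable.

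First, branching of $\{b\}$ does not force $\hat f(\widehat{\sgen{b}})$ to be a singular line. The rank-$n$ result for intersections of $n$--flats, \cite[Thm~6.13]{baderbensaidpetyt:from}, only gives a finite union of singular orthants. For a line this could a priori be two orthogonal singular rays, and it is exactly the weaker conclusion the statement draws for $K$. Singularity needs \emph{both} $\{b\}$ and a complement to be branching, so it comes out of a splitting argument rather than preceding it. Worse, graph-theoretic branching only says $K$ is an intersection of maximal cliques \emph{one} of which has size $n$. The others may be smaller, and biLipschitz images of lower-dimensional flats are not controlled by any rank-$n$ result, so your ``intersection of images'' step does not even yield singular pieces. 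The paper fixes this with \cite[Lem.~3.6(3)]{baderbensaidpetyt:quasiisometric:rigidity}. That lemma gives a graph $\Lambda_K$ and a quasiisometry $\iota_K:X_{\Lambda_K}\to X_\Gamma$ carrying a standard flat that is a genuine intersection of $n$--flats to within bounded distance of $F_K$; only then does Thm~6.13 apply.

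Second, the passage from singular lines to a singular $|B|$--flat is unjustified. The $n$--clique $L_b$ witnessing that $b$ is branch-complemented need not contain the other vertices of $B$. (Nor need $C$ lie in any $n$--clique, contrary to your first step.) So the splitting of $\hat f(\widehat{F_{L_b}})$ says nothing about how $\hat f(\widehat{\sgen{b}})$ sits relative to $\hat f(\widehat{\sgen{b'}})$ for $b'\in B$. Moreover, two singular lines meeting at a point of a biLipschitz plane need not be orthogonal: both can lie in the same tree factor of a product of two trees. The paper handles this with \cite[Lem.~3.6(1)]{baderbensaidpetyt:quasiisometric:rigidity}. It produces $\iota_B:X_{\Lambda_B}\to X_\Gamma$ under which $F_B$ corresponds to a flat that is directionally branch-complemented in the geometric sense of \cite[Def.~9.5]{baderbensaidpetyt:from}, a condition on the flat as a whole. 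It then applies \cite[Prop.~9.20]{baderbensaidpetyt:from} to $\hat f\circ\widehat{\iota_B}$.

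With these two inputs in hand, the splitting you flag as the main obstacle is just \cite[Prop.~7.9]{baderbensaidpetyt:from}. Your description via projections onto line factors presupposes the product structure it is meant to prove. The final orthant assertion is fine as you state it.
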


\begin{proof}
By \cite[Lem.~3.6(1)]{baderbensaidpetyt:quasiisometric:rigidity}, there exists a graph $\Lambda_B$, a $|B|$--clique $B'\subset\Lambda_B$, a $2^{n^2}$--quasiisometry $\iota_B:X_{\Lambda_B}\to X_\Gamma$, and a standard flat $F'_B\subset X_{\Lambda_B}$ such that $\iota_B(F'_B)$ lies at Hausdorff distance at most $2^{n^2}$ from $F_B$ and $F'_B$ is directionally branch-complemented in the sense of \cite[Def.~9.5]{baderbensaidpetyt:from}. Passing to asymptotic cones, we have $\widehat\iota_B(\widehat{F'_B})=\widehat{F_B}$. By \cite[Prop.~9.20]{baderbensaidpetyt:from}, it follows that $\hat f(\widehat{F_B})$ is a singular flat.

Similarly, since $K$ is branching, applying \cite[Lem.~3.6(3)]{baderbensaidpetyt:quasiisometric:rigidity} gives a graph $\Lambda_K$, a $|K|$--clique $K'\subset\Lambda_K$, a $2^n$--quasiisometry $\iota_K:X_{\Lambda_K}\to X_\Gamma$, and a standard flat $F'_K\subset X_{\Lambda_K}$ such that $\iota_K(F'_K)$ lies at Hausdorff distance at most $2^n$ from $F_K$ and $F'_K$ is an intersection of $n$--flats in $X_{\Lambda_K}$. We again have $\widehat{\iota_K}(\widehat{F'_K})=\widehat{F_K}$. It thus follows from \cite[Thm~6.13]{baderbensaidpetyt:from} that $\hat f(\widehat{F_K})$ is a finite union of singular $|K|$--orthants. 

Therefore, \cite[Prop.~7.9]{baderbensaidpetyt:from} applies to $\hat f|_{\widehat{F_C}}$, and gives the desired splitting. The last assertion follows by taking the product of the orthants.
\end{proof}

\section{Branch-graded subgraphs} \label{sec:beyond_FB_2}

The main theorem of this section, \cref{thm:beyond_FB_induced_embedding_singboundary}, extends the boundary obstruction of \cite[Thm~3.10]{baderbensaidpetyt:quasiisometric:rigidity} to a larger class of subgraphs. From a quasiisometric embedding $A_{\Gamma}\to A_{\Lambda}$, \cite[Thm~3.10]{baderbensaidpetyt:quasiisometric:rigidity} obtains a canonical graph embedding $\Gamma\to \partial_{\sing}A_\Lambda$. In \cref{thm:beyond_FB_induced_embedding_singboundary}, we construct a non-canonical embedding.
\medskip


\begin{definition}[Complementary degree]
Let $\Lambda$ be a graph with clique number $n$, and let $v$ be a vertex of $\Lambda$. Let $\mathcal C_\Lambda(v)$ denote the collection of branch-complemented $(n-1)$--cliques of $\Lambda$ that contain $v$. 

We define the \emph{complementary degree} $d_{\mathrm{comp}}(v)$ of $v$ as follows. If $v$ is branch-complemented, then set $d_{\mathrm{comp}}(v)=0$ and let $K_\Lambda(v)=v$. If $v$ is not branch-complemented and $\mathcal C_\Lambda(v)=\varnothing$, then set $d_{\mathrm{comp}}(v)=\infty$. Otherwise, define
\[
K_\Lambda(v) \,= \bigcap_{C\in\mathcal C_\Lambda(v)} C
\]
and let $d_{\mathrm{comp}}(v)=|K_\Lambda(v)|$. See Figure~\ref{fig:comp_degree_exples}.
\end{definition}


\begin{figure}[htbp]
\centering\vspace{-3mm}
\begin{tikzpicture}[baseline=-.1cm, scale=0.7]
\draw[thick]
(0,0) -- (0,1.8) -- (0,3.2)
(0,0) --(-1.5,-1.4) -- (-1.5,0.5) -- (0,0) -- (1.5,0.5) -- (1.5,-1.4)
(-1.5,0.5) -- (0,1.8) -- (1.5,0.5)
(0,0) -- (1.5,-1.4) ; 

\draw[thick,red] (-1.5,0.5) -- (0,0) -- (1.5,0.5);

\fill[blue] (0,0) circle(.18);
\fill[red] (0,1.8) circle(.18);
\fill (0,3.2) circle(.18);
\fill[green!50!black] (1.5,0.5) circle(.18);
\fill[green!50!black] (-1.5,0.5) circle(.18);
\fill (-1.5,-1.4) circle(.18);
\fill (1.5,-1.4) circle(.18);
\end{tikzpicture}
\caption{The red vertex is branch-complemented, i.e.\ of complementary degree $0$. The red edges are the branch-complemented $2$-cliques. The blue vertex has complementary degree $1$, and the green vertices have complementary degree $2$.}
\label{fig:comp_degree_exples}

\end{figure}

The following definition is rooted in the idea of leveraging the strong information we have about branch-complemented cliques in $\Lambda$ to understand more general cliques.

\begin{definition}[Branch-graded] \label{def:bg}
    Let $\Lambda$ be a graph with clique number $n$, and let $\Gamma\leq \Lambda$ be an induced subgraph. We say that $\Gamma$ is \emph{branch-graded} if the following hold in $\Lambda$.
    \begin{enumerate}
        \item \label{first_item} If $v\in\Gamma$ is not branch-complemented, then $v$ lies in a branch-complemented $(n-1)$--clique of $\Lambda$.
        \item \label{second_item} $K_\Lambda(v)\subseteq\Gamma$ for all $v\in\Gamma$. 
        \item For every edge $\{a,b\}$ of $\Gamma$, one of the following holds in $\Lambda$: \label{third_item}
        \begin{enumerate}
        \item \label{it:edge_branch_complemented} $a$ and $b$ are branch-complemented;
        \item\label{it:edge_complementary}
        $a$ is branch-complemented and $b$ lies in a complementary clique of $a$, or vice versa;
        \item\label{it:edge_Kcomp}
        $a\in K_\Lambda(b)$ or $b\in K_\Lambda(a)$.
        \end{enumerate}
    \end{enumerate}
\end{definition}

In particular, if the subgraph $\Gamma \leq \Lambda$ is directionally branch-complemented, then $\Gamma$ is branch-graded in $\Lambda$. More examples are provided in Figure~\ref{fig:exples_branch_graded}. If $\Lambda$ is a graph that is branch-graded as an induced subgraph of itself, then we simply refer to $\Lambda$ as being branch-graded. We also refer to $A_\Lambda$ as being branch-graded in this case.

\begin{figure}[htbp]
\centering
\begin{tikzpicture}[baseline=-.1cm, scale=0.6]
\draw[thick]
(0,0) -- (0,1.8) -- (0,3.2)
(0,0) --(-1.5,-1.4) -- (-1.5,0.5) -- (0,0) -- (1.5,0.5) -- (1.5,-1.4)
(-1.5,0.5) -- (0,1.8) -- (1.5,0.5)
(0,0) -- (1.5,-1.4)
(-1.5,-1.4) -- (1.5,-1.4)
(1.5,-1.4) -- (2.5,-2.3)
(-1.5,-1.4) -- (-2.5,-2.3);

\draw[thick,orange] (0,0) -- (0,1.8);

\draw[thick,orange] (0,0) --(-1.5,-1.4) -- (-1.5,0.5) -- (0,0) -- (1.5,0.5) -- (1.5,-1.4)
(-1.5,0.5) -- (0,1.8) -- (1.5,0.5) ;
\draw[thick,orange] (0,0) -- (1.5,-1.4) -- (-1.5,-1.4);

\draw[thick,orange] (-1.5,0.5) -- (0,0) -- (1.5,0.5);

\fill[blue] (0,0) circle(.18);
\fill[red] (0,1.8) circle(.18);
\fill (0,3.2) circle(.18);
\fill[green!50!black] (1.5,0.5) circle(.18);
\fill[green!50!black] (-1.5,0.5) circle(.18);
\fill[red] (-1.5,-1.4) circle(.18);
\fill[red] (1.5,-1.4) circle(.18);
\fill (-2.5,-2.3) circle(.18);
\fill (2.5,-2.3) circle(.18);
\end{tikzpicture}
\hspace{1cm}
\begin{tikzpicture}[baseline=-.1cm, scale=0.6]
\draw[thick]
(0,0) -- (0,1.8) -- (0,3.2)
(0,0) --(-1.5,-1.4) -- (-1.5,0.5) -- (0,0) -- (1.5,0.5) -- (1.5,-1.4)
(-1.5,0.5) -- (0,1.8) -- (1.5,0.5)
(0,0) -- (1.5,-1.4)
(-1.5,-1.4) -- (1.5,-1.4) -- (0,-2.6) -- (-1.5,-1.4) ;

\draw[thick,orange] (0,0) -- (0,1.8);

\draw[thick,orange] (0,0) --(-1.5,-1.4) -- (-1.5,0.5) -- (0,0) -- (1.5,0.5) -- (1.5,-1.4)
(-1.5,0.5) -- (0,1.8) -- (1.5,0.5) ;
\draw[thick,orange] (0,0) -- (1.5,-1.4) -- (-1.5,-1.4);

\draw[thick,orange] (-1.5,0.5) -- (0,0) -- (1.5,0.5);

\fill[red] (0,0) circle(.18);
\fill[red] (0,1.8) circle(.18);
\fill (0,3.2) circle(.18);
\fill[green!50!black] (1.5,0.5) circle(.18);
\fill[green!50!black] (-1.5,0.5) circle(.18);
\fill[red] (-1.5,-1.4) circle(.18);
\fill[red] (1.5,-1.4) circle(.18);
\fill (0,-2.6) circle(.18);
\end{tikzpicture}
\hspace{1.2cm}
\begin{tikzpicture}[baseline=-.1cm, scale=0.6]
\draw[thick,orange]
(1.6,-1.8) -- (0.8,0) -- (1.6,1.8) -- (1.6,-1.8)
(-1.6,-1.8) -- (-0.8,0) --(-1.6,1.8) -- (-1.6,-1.8)
(1.6,-1.8) -- (-1.6,-1.8)
(0.8,0) -- (-0.8,0)
(1.6,1.8) -- (0,2.4) -- (-1.6,1.8) 
(1.6,1.8) -- (0,1.2) -- (-1.6,1.8)
(0,2.4) -- (0,1.2)
(-0.8,0) -- (1.6,-1.8)  ;

\fill[red] (1.6,-1.8)  circle(.18);
\fill[green!50!black] (-1.6,-1.8)  circle(.18);
\fill[green!50!black] (0.8,0) circle(.18);
\fill[red] (-0.8,0) circle(.18);
\fill[red] (1.6,1.8) circle(.18);
\fill[red] (-1.6,1.8) circle(.18);
\fill[green!50!black] (0,1.2) circle(.18);
\fill[green!50!black] (0,2.4) circle(.18);
\end{tikzpicture}

\vspace{0.8cm}

\begin{tikzpicture}[baseline=-.1cm, scale=0.6]
\draw[thick,orange]
(0.8,0) -- (2.4,0) -- (0,1.8) -- (0.8,0)
(-0.8,0) -- (-2.4,0) -- (0,1.8) -- (-0.8,0)
(2.4,0) -- (1.6,-1.8) -- (0.8,0)
(-2.4,0) -- (-1.6,-1.8) -- (-0.8,0)
(1.6,-1.8) -- (-1.6,-1.8);

\fill[red] (1.6,-1.8)  circle(.18);
\fill[red] (0,1.8)  circle(.18);
\fill[red] (-1.6,-1.8)  circle(.18);
\fill[green!50!black] (0.8,0) circle(.18);
\fill[green!50!black] (2.4,0) circle(.18);
\fill[green!50!black] (-2.4,0) circle(.18);
\fill[green!50!black] (-0.8,0) circle(.18);
\end{tikzpicture}
\hspace{1.2cm}
\begin{tikzpicture}[baseline=-.1cm, scale=0.6]
\draw[thick,orange]
(0.8,0) -- (2.4,0) -- (1.6,1.8) -- (0.8,0)
(-0.8,0) -- (-2.4,0) -- (-1.6,1.8) -- (-0.8,0)
(2.4,0) -- (1.6,-1.8) -- (0.8,0)
(-2.4,0) -- (-1.6,-1.8) -- (-0.8,0)
(1.6,1.8) -- (-1.6,1.8) 
(1.6,-1.8) -- (-1.6,-1.8);

\fill[red] (1.6,1.8)  circle(.18);
\fill[red] (1.6,-1.8)  circle(.18);
\fill[red] (-1.6,1.8)  circle(.18);
\fill[red] (-1.6,-1.8)  circle(.18);
\fill[green!50!black] (0.8,0) circle(.18);
\fill[green!50!black] (2.4,0) circle(.18);
\fill[green!50!black] (-2.4,0) circle(.18);
\fill[green!50!black] (-0.8,0) circle(.18);
\end{tikzpicture}
\hspace{1.2cm}
\begin{tikzpicture}[baseline=-.1cm, scale=0.6]
\draw[thick,orange]
(0,0) -- (1.5,1.5) 
(0,0) -- (1.5,-1.5) 
(0,0) -- (-1.5,1.5)
(0,0) -- (-1.5,-1.5)
(-1.5,-1.5) -- (-1.5,1.5) -- (1.5,1.5) -- (1.5,-1.5) -- (-1.5,-1.5);
\draw[thick]
(1.5,1.5) -- (2.6,-2.6) -- (-1.5,-1.5)  ;

\fill[blue] (0,0) circle(.18);
\fill[red] (1.5,1.5)  circle(.18);
\fill[red] (-1.5,-1.5)  circle(.18);
\fill[green!50!black] (1.5,-1.5) circle(.18);
\fill[green!50!black] (-1.5,1.5) circle(.18);
\fill (2.6,-2.6) circle(.18);
\end{tikzpicture}

\vspace{0.5cm}

\begin{tikzpicture}[baseline=-.1cm, scale=0.5]
\draw[thick,orange]
(0,0) -- (9,0) 
(1.5,2) -- (7.5,2)
(0,0) -- (1.5,-2) -- (3,0)
(0,0) -- (1.5,2) -- (3,0) -- (4.5,2) -- (6,0) -- (7.5,2) -- (9,0) ;

\draw[thick]
(1.5,-2) -- (-1.5,-2) -- (0,0)
(7.5,2) -- (10.5,2) -- (9,4) -- (7.5,2) 
(10.5,2) -- (9,0);

\fill[red] (0,0)  circle(.18);
\fill[red] (3,0)  circle(.18);
\fill[red] (4.5,2)  circle(.18);
\fill[red] (6,0)  circle(.18);
\fill[red] (7.5,2)  circle(.18);
\fill[green!50!black] (1.5,2)  circle(.18);
\fill[green!50!black] (1.5,-2)  circle(.18);
\fill[green!50!black] (9,0)  circle(.18);
\fill (-1.5,-2)  circle(.18);
\fill (10.5,2)  circle(.18);
\fill (9,4)  circle(.18);

\end{tikzpicture}

\caption{The orange graphs/subgraphs are branch-graded. The red, blue and green vertices have complementary degrees $0,1,2$, respectively.}
\label{fig:exples_branch_graded}

\end{figure}


The following is the main result of this section. Recall that we always consider the metric on $A_\Lambda$ to be the one from the standard generating set, so the following also makes sense when $\Lambda$ is infinite.

\begin{theorem}\label{thm:beyond_FB_induced_embedding_singboundary}
Let $\Lambda$ be a graph with clique number $n$, and let $\Gamma$ be a branch-graded induced subgraph.
    If $f:A_\Lambda\to Y$ is a quasiisometric embedding into an $n$--dimensional CAT(0) cube complex, then there exists a combinatorial embedding $g:\Gamma\to\partial_{\mathrm{sing}}Y$.
\end{theorem}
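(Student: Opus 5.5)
We pass to asymptotic cones and use \cref{prop:branch_complemented_splitting_cone} to assign to each vertex of $\Gamma$ a singular ray in $\hat Y$. Then we promote these rays to points of $\partial_{\mathrm{sing}}Y$, as in the proof of \cite[Thm~3.10]{baderbensaidpetyt:quasiisometric:rigidity}. Since $f$ is a quasiisometric embedding, it induces a biLipschitz embedding $\hat f:\widehat{X_\Lambda}\to\hat Y$, and $Y$ has dimension $n$, hence asymptotic rank at most $n$. It has rank at least $n$ because it receives $\hat f$ of an $n$--flat. For each $v\in\Gamma$ we choose a singular ray $\rho_v$ in $\hat Y$ based at the cone point, as follows.

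\emph{Case $d_{\mathrm{comp}}(v)=0$.} Then $v$ is branch-complemented. We take $B=\{v\}$ and $K$ a complementary clique, so \cref{prop:branch_complemented_splitting_cone} shows that $\hat f(\widehat{F_v})$ is a singular line. We let $\rho_v$ be one of its two half-lines, choosing the direction consistently, e.g.\ the image of the positive ray.

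\emph{Case $d_{\mathrm{comp}}(v)>0$.} By \cref{def:bg}\ref{first_item}, $\mathcal C_\Lambda(v)\neq\varnothing$, so $d_{\mathrm{comp}}(v)<\infty$. For each $C\in\mathcal C_\Lambda(v)$, \cref{prop:branch_complemented_splitting_cone} applied to $C$ and its complementary vertex shows that $\hat f(\widehat{F_C})$ is a singular $(n-1)$--flat. The intersection $\bigcap_C\hat f(\widehat{F_C})\supseteq\hat f(\widehat{F_{K_\Lambda(v)}})$ is then a convex intersection of singular flats, hence a singular $d_{\mathrm{comp}}(v)$--flat, using that $\hat f$ is injective and that intersections of standard flats are standard. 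The image $\hat f(\widehat{F_v})$ is a biLipschitz line inside this flat. We want to choose $\rho_v$ to be a coordinate axis ray of this singular flat that $\hat f(\widehat{F_v})$ ``uses''. To keep the assignment injective, we assign distinct axis rays to distinct vertices of $K_\Lambda(v)$, which by \cref{def:bg}\ref{second_item} all lie in $\Gamma$. A counting argument works here: $\hat f(\widehat{F_{K_\Lambda(v)}})$ is a $d_{\mathrm{comp}}(v)$--dimensional biLipschitz flat in a singular flat of the same dimension, so it equals that flat. Projecting the images of the standard lines onto the coordinate factors gives a bijection between the vertices of $K_\Lambda(v)$ and the axes of the flat, by Hall's theorem and a nondegeneracy/determinant argument. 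This is the non-canonical choice.

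Next we pass from rays in $\hat Y$ to $\partial_{\mathrm{sing}}Y$. For this we repeat the ultralimit argument of \cite[Thm~3.10]{baderbensaidpetyt:quasiisometric:rigidity}. Singular rays and singular 2--orthants in $\hat Y$ at the cone point come from sequences of singular quadrants in $Y$. Using cocompactness of the action of $A_\Lambda$ on $X_\Lambda$ (translating to the identity) and a diagonal argument, we produce genuine singular rays in $Y$, and distinct rays in $\hat Y$ yield inequivalent rays in $Y$. This defines $g(v)$, and injectivity of $g$ follows.

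It remains to check adjacency, using \cref{def:bg}\ref{third_item}. Non-adjacent vertices map to non-adjacent points, because singular rays spanning an orthant pull back (via $\hat f^{-1}$ and the splitting) to commuting standard directions. For an edge $\{a,b\}$ we need $\rho_a,\rho_b$ to span a singular 2--orthant, and we treat the three cases separately. In case \ref{it:edge_branch_complemented}, or \ref{it:edge_complementary}, we apply \cref{prop:branch_complemented_splitting_cone} with $B=\{a\}$ and $K$ containing $b$: a complementary clique of $a$, or a branching subclique of it containing $b$. The final assertion of the proposition then gives the orthant. In case \ref{it:edge_Kcomp}, say $a\in K_\Lambda(b)$, both rays are axes of the same singular flat $\bigcap_C\hat f(\widehat{F_C})$, and distinct axes of a singular flat span a singular orthant.

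The main obstacle is the mixed situation in case \ref{it:edge_Kcomp} when $a$ has degree $0$ but $b$ does not. There we must check that $\rho_a$, defined via the line $\hat f(\widehat{F_a})$, coincides with the axis chosen for $a$ in the flat attached to $b$. I would handle this by defining $\rho_a$ uniformly as the axis selected by the bijection in every flat containing it, and verifying consistency via uniqueness of the singular line $\hat f(\widehat{F_a})$.
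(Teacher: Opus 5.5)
Your proposal has genuine gaps in how the rays are chosen in the cone and in how they are lifted back to $Y$.

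\textbf{The flatness claim.} When $d_{\mathrm{comp}}(v)>0$, you claim that \cref{prop:branch_complemented_splitting_cone} makes $\hat f(\widehat{F_C})$ a singular $(n-1)$--flat for every $C\in\mathcal C_\Lambda(v)$. That proposition only gives flatness of $\hat f(\widehat{F_B})$ when $B$ is directionally branch-complemented, and $C$ contains $v$, which is not branch-complemented. Applying it with $B=\{u\}$ (the complementary vertex) and $K=C$ only yields the splitting $\hat f(\widehat{F_{C\cup\{u\}}})=\hat f(\widehat{\gamma_u})\times\hat f(\widehat{F_C})$. For a clique that is merely branching, all one knows is that its image is a finite union of singular orthants. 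Such a union can be biLipschitz to $\mathbb R^k$ without being flat; for example, five singular quadrants whose Tits boundary is a circle of length $5\pi/2$. So $\hat f(\widehat{F_{K_\Lambda(v)}})$ need not be a singular flat, there are no coordinate axes to match with $K_\Lambda(v)$, and the Hall/determinant step has nothing to act on. It is also unsupported on its own terms, since the images of the standard lines do not determine the image of the flat.

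\textbf{Consistency and injectivity.} The consistency problem you flag is therefore open in general, not only when $d_{\mathrm{comp}}(a)=0$. A vertex $a$ with $0<d_{\mathrm{comp}}(a)<d_{\mathrm{comp}}(b)$ lies in $K_\Lambda(b)$, and possibly in several different $K_\Lambda(b')$, and nothing forces the separately chosen bijections to agree. You also never show that vertices with different $K_\Lambda$'s receive different rays.

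\textbf{The missing idea.} The paper resolves this by induction on complementary degree, using \cref{obs:comp_degree}. When treating $K(v)$ with $k=d_{\mathrm{comp}}(v)$, the rays of the lower-degree part $K_2$ are already fixed. By the inductive cliques-to-orthants property they span a singular orthant, whose boundary is a simplex $\sigma_2$ of the spherical complex $\partial_T\hat f(\widehat{F_{K(v)}})$. That complex is homeomorphic to $\mathbb S^{k-1}$, so $\sigma_2$ lies in a $(k-1)$--simplex $\sigma$. The vertices of $\sigma\ssm\sigma_2$ are then assigned bijectively to the degree-$k$ vertices of $K(v)$. No flatness is needed and earlier choices are never revised. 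Injectivity across different $K,K'$ follows from $\hat f(\widehat{F_K})\cap\hat f(\widehat{F_{K'}})=\hat f(\widehat{F_{K\cap K'}})$ and a dimension count.

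\textbf{Lifting back to $Y$.} This step does not work as described. $Y$ carries no group action and $f$ is not equivariant; moreover $\Lambda$ may be infinite. So cocompactness of $A_\Lambda$ on $X_\Lambda$ plus a diagonal argument produces nothing in $Y$, and a singular ray of $\hat Y$ need not be the ultralimit of any singular ray of $Y$. The paper obtains genuine rays from Huang's quasiflat theorem \cite[Thm~1.1]{huang:top} together with \cite[Prop.~5.7]{baderbensaidpetyt:from}. These show that $f(F_{K(v)})$, and $f(\gamma_v)$ in degree $0$, lies at finite Hausdorff distance from a finite union $U$ of singular orthants of $Y$. The simplex $\sigma$ is chosen in $\partial_T\hat f(\widehat{F_{K(v)}})\cong\partial_T U$, so each $\rho_v$ is the ultralimit of a singular ray $\tau_v\subset Y$. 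The isometric embedding $\partial_{\mathrm{sing}}Y\to\partial_{\mathrm{sing}}\hat Y$ of \cite[Lem.~2.10]{baderbensaidpetyt:from} then transfers both injectivity and adjacency to $Y$; the adjacency transfer is also missing from your write-up.

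\textbf{Smaller points.} A combinatorial embedding only requires injectivity and edges mapping to edges. Your claim that non-adjacent vertices go to non-adjacent points is neither needed nor justified, so drop it. In case (a), $b$ need not lie in a complementary clique of $a$. However, $K=\{b\}$ is branching because $b$ is branch-complemented, so the proposition still applies.
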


The proof will go via asymptotic cones. Let $\hat Y$ be an asymptotic cone of $Y$ with basepoint $f(1)$. We will construct a map $\widetilde f:\Gamma\to \partial_{\mathrm{sing}}\hat Y $ whose image lies in the image of the isometric embedding $\partial_{\mathrm{sing}}Y \hookrightarrow \partial_{\mathrm{sing}}\hat Y $, given in \cite[Lem.~2.10]{baderbensaidpetyt:from}. Let us first give an outline of the construction of $\widetilde{f}$. In the proof below, we will prove that $\widetilde{f}$ gives rise to the desired embedding.

\begin{notation}
Both here and in the proof, we shall use the following notation.
\begin{itemize}
    \item We denote by $\widehat{X_\Lambda}$ an asymptotic cone of $X_\Lambda$ with basepoint 1 and by $\hat A\subset \widehat{X_\Lambda}$ the ultralimit of a subset $A\subseteq X_\Lambda$. Let
    $\hat f:\widehat{X_\Lambda} \to \hat Y$ be the induced biLipschitz embedding.
    \item For a clique $K\subseteq\Lambda$, let $F_K\subset X_\Lambda$ denote the standard flat through the identity associated with $K$. For a vertex $v$, let $\gamma_v$ be the standard geodesic through the identity labelled by $v$, and let $\gamma_v^+$ be its positive ray. Note that $\widehat{\gamma_v^+}=\widehat{\gamma_v}^+$.
    \item For $v\in\Lambda$, we shall write $K(v)=K_{\Lambda}(v)$, as all branching, branch complementation, and complementarity conditions are taken in $\Lambda$, even if $v$ happens to be a vertex of $\Gamma$.
\end{itemize}
\end{notation}

\begin{observation} \label{obs:comp_degree}
By \cref{first_item} in the definition of branch-graded, the clique $K(v)$ is defined for every $v\in\Gamma$. If $v$ is not branch-complemented, then $K(v)$ is a non-empty branching clique containing $v$, and $d_{\mathrm{comp}}(v)= |K(v)|\leq n-1$. If $u\in K(v)$, then every branch-complemented clique containing $v$ contains $u$, so $K(u)\subseteq K(v)$. Thus $d_{\mathrm{comp}}(u)\leq d_{\mathrm{comp}}(v)$, and if $d_{\mathrm{comp}}(u)=d_{\mathrm{comp}}(v)$ then $K(u)=K(v)$.
\end{observation}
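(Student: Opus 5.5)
The plan is to unwind the definitions directly. No geometry is needed: every claim is a statement about intersections of cliques in $\Lambda$. Throughout, $v\in\Gamma$, and we use the fact that a branch-complemented $(n-1)$--clique $W$ is, by \cref{def:fully_branching_in_graphs}, itself branching, since the definition requires $\{W\}$ (that is, $W$ regarded as a clique) to be branching.

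\emph{$K(v)$ is defined, and its basic properties.} If $v$ is branch-complemented, then $K(v)=v$ by definition. Otherwise, \cref{first_item} of \cref{def:bg} says that $v$ lies in some branch-complemented $(n-1)$--clique, so $\mathcal C_\Lambda(v)\neq\varnothing$. Hence $K(v)=\bigcap_{C\in\mathcal C_\Lambda(v)}C$ is defined, and $d_{\mathrm{comp}}(v)=|K(v)|<\infty$.
\begin{itemize}
\item It contains $v$, so it is nonempty.
\item It is a clique, being an intersection of cliques.
\item It lies inside an $(n-1)$--clique, so $|K(v)|\le n-1$.
\item It is branching. Each $C\in\mathcal C_\Lambda(v)$ is an intersection of maximal cliques, at least one of which is an $n$--clique. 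So $K(v)$ is the intersection of the union of these families of maximal cliques, and that union still contains an $n$--clique.
\end{itemize}
Note also that $d_{\mathrm{comp}}(v)\ge1$ whenever $v$ is not branch-complemented, because $v\in K(v)$.

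\emph{Inclusion $K(u)\subseteq K(v)$ for $u\in K(v)$.} If $v$ is branch-complemented, then $K(v)=\{v\}$, so $u=v$ and there is nothing to prove. Otherwise, $u\in K(v)$ means every $C\in\mathcal C_\Lambda(v)$ contains $u$, so $\mathcal C_\Lambda(v)\subseteq\mathcal C_\Lambda(u)$.
\begin{itemize}
\item If $u$ is branch-complemented, then $K(u)=\{u\}\subseteq K(v)$.
\item If not, then $\mathcal C_\Lambda(u)\supseteq\mathcal C_\Lambda(v)\neq\varnothing$, so $K(u)$ is defined. An intersection over a larger family is smaller, so $K(u)\subseteq K(v)$.
\end{itemize}

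\emph{Monotonicity and the equality case.} This is the one point needing care. For branch-complemented $u$ we have $d_{\mathrm{comp}}(u)=0$ while $|K(u)|=1$, so degree is not literally $|K(\cdot)|$ and we split into cases.
\begin{itemize}
\item If $u$ is branch-complemented, then $d_{\mathrm{comp}}(u)=0\le d_{\mathrm{comp}}(v)$. If in addition the degrees are equal, then $d_{\mathrm{comp}}(v)=0$. So $v$ is branch-complemented (non-branch-complemented vertices have degree $\ge1$), whence $u\in K(v)=\{v\}$ gives $u=v$ and $K(u)=K(v)$.
\item If $u$ is not branch-complemented, then $v$ is not either, since otherwise $u=v$. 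Both degrees are then the cardinalities of the finite sets $K(u)\subseteq K(v)$. This gives $d_{\mathrm{comp}}(u)\le d_{\mathrm{comp}}(v)$, and if the cardinalities agree then inclusion forces $K(u)=K(v)$.
\end{itemize}
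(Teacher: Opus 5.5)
Your proof is correct and is essentially the paper's argument: the paper justifies the observation by the same direct unwinding of definitions, namely that item~1 gives $\mathcal C_\Lambda(v)\neq\varnothing$, and $u\in K(v)$ gives $\mathcal C_\Lambda(v)\subseteq\mathcal C_\Lambda(u)$, hence $K(u)\subseteq K(v)$. You also make explicit two points the paper leaves implicit: why $K(v)$ is branching (by pooling the families of maximal cliques), and the case split forced by $d_{\mathrm{comp}}=0$ while $|K(\cdot)|=1$ for branch-complemented vertices.
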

    
    We construct the map $\widetilde f:\Gamma\to\partial_{\mathrm{sing}}\hat Y$ by assigning to each vertex $v \in \Gamma$ a singular geodesic ray $\rho_v \in \hat f(\widehat{F_{K(v)}})$, such that for each clique the chosen rays span a singular orthant, and setting $\widetilde f(v)$ to be the endpoint of $\rho_v$. We shall also ensure that $\rho_v$ is the ultralimit of some singular geodesic ray $\tau_v\subset Y$, and let $g(v)$ be the endpoint of $\tau_v$. 
    
    First, if $v$ is branch-complemented, then we show that $\hat f(\widehat{\gamma_v})$ is a singular geodesic and set $\rho_v=\hat f(\widehat{\gamma_v^+})$. 
    We then proceed by induction on $d_{\mathrm{comp}}$. 
    %
    %
    %
    For the induction step, the singular geodesic rays we choose in $\hat Y$ will not necessarily be canonically determined. For $k>0$, given a vertex $v$ with $d_{\mathrm{comp}}(v)=k$, we know that 
    %
    $K(v)$ consists of vertices of complementary degree at most $k$  (\cref{obs:comp_degree}). We shall define $\widetilde f$ on all vertices in $K(v)$ of complementary degree $k$ simultaneously. 
    By induction, the vertices of complementary degree less than $k$ were already assigned singular geodesic rays that span a singular orthant in $\hat f(\widehat{F_{K(v)}})$. We extend this singular orthant to a singular $k$--orthant in $\hat f(\widehat{F_{K(v)}})$, then assign its remaining axis-rays bijectively to the vertices of $K(v)$ of complementary degree $k$. 
    At each step of the induction, we check that $\widetilde f$ is injective, and use \Cref{prop:branch_complemented_splitting_cone} to check that for each clique the chosen rays span a singular orthant.

\begin{proof}[Proof of \Cref{thm:beyond_FB_induced_embedding_singboundary}] 
We shall use induction on $d_{\mathrm{comp}}$ to define, for a vertex $v\in\Gamma$, a singular ray $\rho_v\subset\hat f\left(\widehat{F_{K(v)}}\right)$ based at the cone point of $\hat Y$. We then set $\widetilde f(v)$ to be the point of $\partial_{\mathrm{sing}}\hat Y$ represented by $\rho_v$. We do this such that:
\begin{itemize}
    \item $\widetilde f$ is injective;
    \item (Cliques to orthants) for each clique $C\subseteq \Gamma$, the rays $\{\rho_v\}_{v\in C}$ span an orthant in $\hat f(\widehat{F_C})$;
    \item there is a singular geodesic ray $\tau_v\subset Y$ whose ultralimit is $\rho_v$.
\end{itemize}

Once we have done this, we have an injective map $\widetilde f:\Gamma\to\partial_{\mathrm{sing}}\hat Y$, and the cliques to orthants condition implies that it is a combinatorial embedding. Define $g:\Gamma\to\partial_{\mathrm{sing}}Y$ by setting $g(v)$ to be the point represented by $\tau_v$. According to \cite[Lem.~2.10]{baderbensaidpetyt:from}, we have an isometric embedding $\partial_TY\to\partial_T\hat Y$, and it restricts to an isometric embedding $\varphi:\partial_{\mathrm{sing}}Y\to\partial_{\mathrm{sing}}\hat Y$. Since the ultralimit of $\tau_v$ is $\rho_v$ for every $v\in\Gamma$, we have $\widetilde f=\varphi g$, so $g$ is a combinatorial embedding.

It remains to make the above inductive construction.

\medskip\noindent
\textbf{Complementary degree 0.}
If $v\in\Gamma$ has complementary degree zero then it is branch-complemented. Let $\gamma_v=F_{K(v)}$ be the corresponding geodesic.
As in the proof of \cref{prop:branch_complemented_splitting_cone}, applying \cite[Lem.~3.6(1)]{baderbensaidpetyt:quasiisometric:rigidity} gives a graph $\Lambda'$ and a standard geodesic $\gamma'\subset X_{\Lambda'}$ that is branch-complemented in the sense of \cite[Def.~9.3]{baderbensaidpetyt:from} and has the property that $f(\gamma_v)$ lies at finite Hausdorff distance from the image of $\gamma'$ under a quasiisometric embedding $X_{\Lambda'}\to Y$. 

By applying Huang's quasiflats theorem \cite[Thm~1.1]{huang:top} and \cite[Prop.~5.7]{baderbensaidpetyt:from} together to $\gamma'$, we deduce that $f(\gamma_v)$ lies at finite Hausdorff distance from a finite union of singular geodesic rays.
Hence there is a singular geodesic ray $\tau_v\subset Y$ at finite Hausdorff distance from $f(\gamma_v^+)$. Let $\rho_v=\hat f(\widehat{\gamma_v}^+)$ be the singular geodesic ray that is the ultralimit of $\tau_v$. It is based at the cone point of $\hat Y$. Define $\widetilde f(v)\in\partial_{\mathrm{sing}}\hat Y$ to be its endpoint.


\medskip\noindent
\emph{Injectivity:} 
If $v$ and $w$ are distinct branch-complemented vertices, then $\widehat{\gamma_v}^+$ and $\widehat{\gamma_w}^+$ are not at finite Hausdorff distance in $\widehat{X_\Lambda}$. Since $\hat f$ is a biLipschitz embedding, their images $\rho_v$ and $\rho_w$ are not at finite Hausdorff distance in $\hat Y$. Hence $\widetilde f(v)\neq\widetilde f(w)$.

\medskip\noindent
\emph{Cliques to orthants:}  
Suppose that $C=\{v_1,\dots,v_p\}\subseteq\Gamma$ is a clique consisting of branch-complemented vertices. Similarly to before, we can use \cite[Lem.~3.6(1)]{baderbensaidpetyt:quasiisometric:rigidity} to find that $f(F_C)$ lies at finite Hausdorff distance from the quasiisometrically embedded image of a directionally branch-complemented (in the sense of \cite[Def.~9.5]{baderbensaidpetyt:from}) flat $F'$ in some $n$--dimensional CAT(0) cube complex. Applying \cite[Cor.~9.21]{baderbensaidpetyt:from} to $F'$, we deduce that $\hat f(\widehat{F_C})$ is a singular $p$--flat in $\hat Y$.
In particular, the chosen singular rays $\rho_{v_1},\dots,\rho_{v_p}$ span a singular $p$--orthant contained in $\hat f(\widehat{F_C})$.

\medskip\noindent
\textbf{Induction step.}
Let $k>0$ and assume that $\widetilde f$ has been defined on all vertices of complementary degree at most $k-1$, satisfying the above bullet points.

Let $v\in\Gamma$ be a vertex with $d_{\mathrm{comp}}(v)=k$. 
By \cref{obs:comp_degree}, we can write $K(v) = K_1 \cup K_2$, where $K_1$ is the set of vertices of complementary degree $k$, and $K_2$ is the set of vertices of complementary degree at most $k-1$.
By \cref{second_item} in the definition of branch-graded, $K(v)\subseteq\Gamma$, so $\widetilde f$ has been defined on $K_2$, but not on any vertex in $K_1$. 
For every $u\in K_1$, \cref{obs:comp_degree} also tells us that $K(u)=K(v)$. We will define the images under $\widetilde f$ of all vertices of $K_1$ simultaneously.

Just as before, since $K(v)$ is branching we can apply \cite[Lem.~3.6(3)]{baderbensaidpetyt:quasiisometric:rigidity}, Huang's quasiflats theorem \cite[Thm~1.1]{huang:top}, and \cite[Prop.~5.7]{baderbensaidpetyt:from} to see that $f(F_{K(v)})$ lies at finite Hausdorff distance from a finite union $U$ of singular $k$--orthants in $Y$. By \cite[Prop.~5.3]{baderbensaidpetyt:from}, the spherical complex $\partial_T U$ is homeomorphic to $\mathbb S^{k-1}$. Since the ultralimit of a singular orthant of $Y$ is a singular orthant of $\hat Y$ based at the cone point, it follows from \cite[Lem.~2.10]{baderbensaidpetyt:from} that $\partial_T\hat f(\widehat{F_{K(v)}})$ is isometric to $\partial_TU$, and in particular homeomorphic to $\mathbb S^{k-1}$.

By induction, since $K_2$ is a clique the rays $\{\rho_u\}_{u\in K_2}$ span a singular orthant of $\hat f(\widehat{F_{K_2}}) \subseteq \hat f(\widehat{F_{K(v)}})$. Its boundary is a simplex $\sigma_2$ of the spherical complex $\partial_T \hat f \left(\widehat{F_{K(v)}} \right)$. The latter is homeomorphic to $\mathbb S^{k-1}$, so $\sigma_2$ is contained in some $(k-1)$--simplex $\sigma \subseteq \partial_T\hat f(\widehat{F_{K(v)}})$. Let $O\subset U$ be the $k$--orthant corresponding to $\sigma$. We assign the vertices of $\sigma\ssm\sigma_2$ bijectively to the vertices of $K_1$. For each $w\in K_1$, let $\rho_w$ be the corresponding singular axis ray based at the cone point of $\hat Y$, and define $\widetilde f(w)\in\sigma$ to be its endpoint. In particular, this defines $\rho_v\subset\hat f(\widehat{F_{K(v)}})$ and $\widetilde f(v)$. For each $w\in K_1$, we also let $\tau_w\subset O$ be an arbitrary singular geodesic whose ultralimit is $\rho_w$.

We must show the cliques to orthants property and injectivity of $\widetilde f$.

\medskip\noindent
\emph{Cliques to orthants:}  
Let $C \subset \Gamma$ be a clique composed of vertices of complementary degree at most $k$.
Since $\rho_u$ is singular for all $u\in C$, the property follows for all subcliques of $C$, so we may assume that $C$ is maximal.
If $C$ does not contain vertices of complementary degree $k$, then the conclusion follows from the induction hypothesis. Otherwise, let $v \in C$ be a vertex of complementary degree $k$. If $C=K(v)$ then by construction we know that the rays $\{\rho_u\}_{u\in K(v)}$ span an orthant of $\hat f(\widehat F_{K(v)})$. Let $ B \subset C$ denote the branch-complemented vertices of $C$. 

\begin{claim*}
    $C\subseteq B\cup K(v)$
\end{claim*}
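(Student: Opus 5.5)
Fix $u\in C$ and show that either $u$ is branch-complemented or $u\in K(v)$. Suppose $u\in C$ is not branch-complemented. Since $C\subseteq\Gamma$ is a clique containing both $u$ and $v$, the pair $\{u,v\}$ is an edge of $\Gamma$ when $u\neq v$. The case $u=v$ is trivial, since $v\in K(v)$. The plan is to run through the three alternatives of \cref{third_item} in the definition of branch-graded for this edge, using \cref{obs:comp_degree} to rule out the bad ones.

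\emph{Case (a)} would require both $u$ and $v$ to be branch-complemented. This is impossible, since $d_{\mathrm{comp}}(v)=k>0$.

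\emph{Case (b)} requires one endpoint to be branch-complemented. Since $v$ is not branch-complemented, this would have to be $u$, contrary to assumption.

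\emph{Case (c)} gives $u\in K(v)$ or $v\in K(u)$. If $u\in K(v)$ we are done. Otherwise $v\in K(u)$, and \cref{obs:comp_degree} gives $K(v)\subseteq K(u)$ and $k=d_{\mathrm{comp}}(v)\le d_{\mathrm{comp}}(u)$. By hypothesis every vertex of $C$ has complementary degree at most $k$, so $d_{\mathrm{comp}}(u)=k$. The equality clause of \cref{obs:comp_degree}, applied with the roles of $u$ and $v$ swapped, then gives $K(v)=K(u)$. Since $u\in K(u)$ (as $u$ is not branch-complemented, $K(u)$ is a branching clique containing $u$), we get $u\in K(v)$.

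So every $u\in C$ lies in $B$ or in $K(v)$, which proves $C\subseteq B\cup K(v)$.

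The main point needing care is case (c) with $v\in K(u)$. There one must use the hypothesis $d_{\mathrm{comp}}(u)\le k$ together with the equality clause of \cref{obs:comp_degree}, rather than only the inequality.
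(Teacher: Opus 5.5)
Your proof is correct and follows the same route as the paper. For $u\in C\setminus B$, the edge $\{u,v\}$ must satisfy alternative (c), and in the subcase $v\in K(u)$ the bound $d_{\mathrm{comp}}(u)\le k$ forces $K(u)=K(v)\ni u$; you are only more explicit than the paper in ruling out alternatives (a) and (b) and in handling $u=v$ separately.
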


\begin{claim*proof}
If $u\in C\ssm B$, then neither $v$ nor $u$ is branch-complemented. By \cref{it:edge_Kcomp}, either $u \in K(v)$ and we are done, or $v \in K(u)$. In this case, $K(v) \subseteq K(u)$, and hence $k=|K(v)|\leq |K(u)|\leq k$, so the sets are equal and $u \in K(u) = K(v)$.
\end{claim*proof}

\begin{claim*}
    $B\cup K(v)$ is a clique.
\end{claim*}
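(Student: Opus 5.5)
The plan is a direct check: $B\subseteq C$ and $K(v)$ are each cliques, so it suffices to show that every $b\in B$ is adjacent to every vertex of $K(v)\ssm\{b\}$. I would fix $b\in B$ and apply \cref{third_item} in the definition of branch-graded to the edge $\{b,v\}$ of $\Gamma$. This edge exists because $b,v\in C$, and $b\neq v$ because $v$ has complementary degree $k>0$ and so is not branch-complemented. Then I go through the alternatives of \cref{third_item} one at a time.

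\emph{Cases that are excluded or immediate.} Alternative \cref{it:edge_branch_complemented} is impossible, since $v$ is not branch-complemented. For the same reason, the ``vice versa'' half of \cref{it:edge_complementary} cannot occur: there $v$ would be the branch-complemented endpoint. In \cref{it:edge_Kcomp}, the option $v\in K(b)$ is impossible, because $b$ is branch-complemented and so $K(b)=\{b\}\not\ni v$. The remaining option is $b\in K(v)$, and then $b$ is adjacent to all other vertices of $K(v)$ simply because $K(v)$ is a clique.

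\emph{The main case.} This is the half of \cref{it:edge_complementary} in which $b$ is branch-complemented and $v$ lies in a complementary clique of $b$. So there is an $n$--clique $K\ni b$ with $\{b\}$ and $K\ssm\{b\}$ both branching, and $v\in K\ssm\{b\}$. The key observation is that $W=K\ssm\{b\}$ is itself a branch-complemented $(n-1)$--clique, witnessed by the same $n$--clique $K$. Indeed, $W$ is branching by assumption, and its complement $K\ssm W=\{b\}$ is branching. Since $v\in W$, we get $W\in\mathcal C_\Lambda(v)$. Hence
\[ K(v)=\bigcap_{C'\in\mathcal C_\Lambda(v)}C' \subseteq W=K\ssm\{b\}, \]
so every vertex of $K(v)$ lies in the clique $K$ together with $b$, and is therefore adjacent to $b$.

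Combining the cases, every $b\in B$ is adjacent to all of $K(v)\ssm\{b\}$. Together with $B$ and $K(v)$ being cliques, this shows that $B\cup K(v)$ is a clique. The only step with real content is noticing that a complementary clique of a branch-complemented vertex is automatically a member of $\mathcal C_\Lambda(v)$.
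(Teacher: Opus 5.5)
Your proof is correct and follows the paper's argument: apply \cref{third_item} to the edge $\{b,v\}$, rule out \cref{it:edge_branch_complemented} and \cref{it:edge_Kcomp}, and use that a complementary clique of $b$ containing $v$ lies in $\mathcal C_\Lambda(v)$, so it contains $K(v)$. The only differences are cosmetic. The paper restricts to $b\in B\ssm K(v)$ from the outset, whereas you treat $b\in K(v)$ as a subcase. You also spell out why $K\ssm\{b\}$ is branch-complemented, which the paper leaves as ``by definition''.
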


\begin{claim*proof}
Let $b \in B \ssm K(v)$. Since both $b$ and $v$ belong to $C$, they are adjacent. 
The edge $\{v,b\}$ does not satisfy \cref{it:edge_branch_complemented} because $v$ is not branch-complemented. It does not satisfy \cref{it:edge_Kcomp}, as $b \notin K(v)$ and $K(b)=\{b\}$.
Thus \cref{it:edge_complementary} is satisfied and so $v$ is contained in some complementary clique $K$ of $b$, and by definition $K(v) \subseteq K$. In particular, $b$ is adjacent to all vertices in $K(v)$. 
\end{claim*proof}

Because $C$ is maximal, we have $C=B\cup K(v)$. As observed above we are done if $C=K(v)$, so suppose that $B'=B\ssm K(v)$ is nonempty. We have $C=B'\sqcup K(v)$, and hence the cliques to orthants property for $C$ follows by applying \Cref{prop:branch_complemented_splitting_cone}.

\medskip\noindent
\emph{Injectivity.}
Let $v$ and $v'$ be distinct vertices of complementary degree at most $k$. Set $K = K(v)$ and $K'=K(v')$. If $v' \in K $, then $\widetilde f(v) \ne \widetilde f(v')$ by construction, so assume that $v'\notin K$. 
By construction, $\widetilde f(v)\in\partial_T\hat f(\widehat{F_{K}})$ and $\widetilde f(v')\in\partial_T\hat f(\widehat{F_{K'}})$. If $K\cap K'=\varnothing$, then $\partial_T\hat f(\widehat{F_{K}})\cap \partial_T\hat f(\widehat{F_{K'}})=\varnothing$, so $\widetilde f(v)\ne\widetilde f(v')$.

Otherwise, $K\cap K'=\{u_1,\dots,u_p\}$ is nonempty. Since $v'\notin K$, we have $p<k$. Let $\rho_1, \dots, \rho_p \in \hat Y$ be the singular geodesic rays based at the cone point of $\hat Y$ such that $\partial_T \rho_i = \widetilde f(u_i)$. Let $R_v \subset \hat Y$ (respectively $R_{v'}$) be the singular orthant based at the cone point that is spanned by the images of vertices of $K(v)$ (respectively $K(v')$). By construction, $R_v \subset \hat f(\widehat{F_{K}})$ and $R_{v'} \subset \hat f(\widehat{F_{K'}})$, so 
\[
 \rho_1,\dots,\rho_p\subset R_v \cap R_{v'} \,\subset\, \hat f(\widehat{F_{K}}) \cap \hat f(\widehat{F_{K'}}) \,=\, \hat f(\widehat{F_{K\cap K'}}),
\]
where the equality holds because $\hat f$ is a biLipschitz embedding, and in particular injective. If $v$ and $v'$ were assigned the same singular geodesic ray $\rho$ based at the cone point of $\hat Y$, then we would have $\rho\subset R_v\cap R_{v'}$. But $\rho$ must be distinct from $\rho_1,\dots,\rho_p$, because $\{v,u_1,\dots,u_p\}\subset K$, so $\{\rho,\rho_1,\dots,\rho_p\}$ together span a singular $(p+1)$--orthant inside $R_v\cap R_{v'}\subset\hat f(\widehat{F_{K\cap K'}})$, which is impossible. This shows that $\widetilde f$ is injective, concluding the induction. As described above, this completes the proof.
\end{proof}

\begin{remark} \label{rem:bg_asrk}
If we make the weaker assumption in \cref{thm:beyond_FB_induced_embedding_singboundary} that $Y$ is a finite-dimensional CAT(0) cube complex with \emph{asymptotic rank} $n$, then with only small modifications to the above proof we can still obtain a combinatorial embedding $\tilde f:\Gamma\to\partial_{\mathrm{sing}}\hat Y$. 

Indeed, the map $\tilde f$ will be the one constructed in the proof of \cref{thm:beyond_FB_induced_embedding_singboundary}, and there is no need to define $\tau_v$. Instead of applying \cite[Thm~1.1]{huang:top} and \cite[Prop.~5.7]{baderbensaidpetyt:from}, we can apply \cite[Thm.~6.13]{baderbensaidpetyt:from}, which states: if $h:X\to Z$ is a quasiisometric embedding of finite-dimensional CAT(0) cube complexes of asymptotic rank $n$ and if $H\subset X$ is a flat that is an intersection of $n$--flats, then $h(H)$ lies at finite Hausdorff distance from a finite union of \emph{semisingular} orthants. In our setting, this gives singular orthants in $\hat Y$, which is all that is needed for the above proof. The remainder of the induction step is unchanged. 
\end{remark}

In the same spirit as \cref{rem:bg_asrk}, we can replace the codomain in \cref{thm:beyond_FB_induced_embedding_singboundary} by a product of $n$ hyperbolic spaces.

\begin{theorem} \label{rem:bg_hyp}
Let $\Lambda$ be a graph with clique number $n$, and let $\Gamma$ be a branch-graded subgraph. If there exists a quasiisometric embedding $f:A_\Lambda\to Y$ where $Y$ is a product of $n$ hyperbolic spaces, then $\Gamma$ is $n$--colourable.
\end{theorem}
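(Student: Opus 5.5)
Let $Y=H_1\times\cdots\times H_n$ be a product of hyperbolic spaces. The plan is to run the argument of \cref{thm:beyond_FB_induced_embedding_singboundary} in an asymptotic cone. Then we read off an $n$--colouring from the product structure of the cone.

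Pass to asymptotic cones. The cone $\hat Y=\hat H_1\times\cdots\times\hat H_n$ is a product of $n$ real trees, hence a (non-discrete) median/CAT(0) space of rank $n$. Singular orthants in $\hat Y$ are products of rays $r_{i_1}\times\cdots\times r_{i_k}$ with $r_{i_j}\subset\hat H_{i_j}$ geodesic rays in distinct factors. The endpoint of a singular ray lies in $\partial\hat H_i$ for a unique $i$. Two singular rays span a singular $2$--orthant only if they lie in different factors. So the analogue of $\partial_{\mathrm{sing}}\hat Y$ is the complete $n$--partite graph on $\partial\hat H_1,\dots,\partial\hat H_n$, as in \cref{ex:stairwase_and_sing_bound_product_trees}. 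It is therefore enough to produce a combinatorial map $\widetilde f:\Gamma\to$ this graph, sending adjacent vertices to rays spanning an orthant. Colouring $v$ by the factor index of $\rho_v$ then gives an $n$--colouring, since adjacent vertices land in different factors. Note that injectivity is not even needed for this conclusion, only the cliques-to-orthants property.

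To build $\widetilde f$, follow \cref{rem:bg_asrk} and run the inductive proof of \cref{thm:beyond_FB_induced_embedding_singboundary} verbatim in $\hat Y$, dropping the rays $\tau_v$. The needed inputs are as follows. First, $\hat f(\widehat{F_B})$ is a singular flat for directionally branch-complemented $B$. Second, $\hat f(\widehat{F_K})$ is a finite union of singular $|K|$--orthants for branching $K$, and its Tits boundary is a $(|K|-1)$--sphere. Third, the splitting of \cref{prop:branch_complemented_splitting_cone} holds. These must be established with $\hat Y$ a product of $n$ real trees in place of a cone of an $n$--dimensional cube complex. The cited results of \cite{baderbensaidpetyt:from} are proved for (cones of) finite-rank median spaces/cube complexes of rank $n$. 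The key steps are as follows.

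\begin{enumerate}
\item Verify that $\hat Y$ satisfies the hypotheses used there: it is a complete median space of rank $n$ whose ``singular'' directions are the factor directions.
\item Alternatively, replace $Y$ by a product of trees. Each $H_i$ is quasiisometric to a subset of a tree only coarsely, so instead embed each $H_i$ (or its cone) into a product of trees. Buyalo's results give that a hyperbolic space of finite Assouad–Nagata type embeds in finitely many trees, but that breaks the dimension count, so prefer the first route.
\end{enumerate}

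Concretely, the quasiflats/orthant statement \cite[Thm~6.13]{baderbensaidpetyt:from} should be applied to the quasiisometric embedding $A_\Lambda\to Y$ in its median-space form. This yields that images of branching flats are finite unions of products of rays in distinct factors.

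The main obstacle is exactly this transfer: justifying that the structural results on images of branching and branch-complemented flats hold in the cone of a product of hyperbolic spaces. The inputs are Huang-type quasiflat results, splitting, and the spherical Tits boundary. The first thing to try is to observe that $\hat Y$ is a product of $n$ $\R$--trees, hence a rank-$n$ median space. Then check that the proofs in \cite{baderbensaidpetyt:from} only use cone-level median/CAT(0) geometry of rank $n$; quasiflats in products of hyperbolic spaces are also known to be finite unions of products of quasi-rays by Kleiner–Leeb/Huang. Once this is in place, the combinatorial induction and the colouring step are routine.
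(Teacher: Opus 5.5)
Your overall architecture matches the paper's sketch. You work only in asymptotic cones, rerun the induction of \cref{thm:beyond_FB_induced_embedding_singboundary} there without the rays $\tau_v$, and colour each vertex by the factor containing its singular ray. You are also right that only the cliques-to-orthants property is needed for the colouring.

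\textbf{The gap.} The step you flag as ``the main obstacle'' is left unresolved, and that is where the actual work lies. The inputs you need are \cite[Prop.~9.20]{baderbensaidpetyt:from}, the splitting \cite[Prop.~7.9]{baderbensaidpetyt:from} underlying \cref{prop:branch_complemented_splitting_cone}, and the sphere statement for unions of orthants. These are proved for biLipschitz embeddings into asymptotic cones of finite-dimensional CAT(0) cube complexes, and $Y$ is not one. Your route~1 (``check that the proofs only use rank-$n$ median geometry of the cone'') is an unverified claim about the internals of another paper, not an argument. The coarse tools you invoke are also unavailable as stated:
\begin{itemize}
\item Huang's theorem and \cite[Thm~6.13]{baderbensaidpetyt:from} need a cube complex as codomain, so there is no ``median-space form'' to apply to $A_\Lambda\to Y$.
\item Quasiflat theorems of Kleiner--Leeb type concern top-rank quasiflats. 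In the induction step, $F_{K(v)}$ has dimension $k<n$ and is controlled only through the branching of $K(v)$.
\end{itemize}

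\textbf{The missing idea.} It is essentially your discarded route~2, carried out on cones rather than coarsely. Each $\hat H_i$ is an $\R$--tree, so it embeds isometrically in an asymptotic cone of a suitable (highly branching) simplicial tree $T_i$. This gives a factor-wise isometric embedding $\iota:\hat Y\to\hat Z$, where $Z=T_1\times\cdots\times T_n$ is an $n$--dimensional CAT(0) cube complex. No dimension is lost; your objection to route~2 only applies to coarse embeddings of $H_i$ itself.

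Then $\iota\hat f:\widehat{X_\Lambda}\to\hat Z$ is a biLipschitz embedding into the cone of a cube complex, and the cone-level results apply to it directly:
\begin{itemize}
\item \cite[Prop.~9.20]{baderbensaidpetyt:from} handles the degree-$0$ vertices and the cliques of branch-complemented vertices.
\item In the induction step, \cite[Lem.~5.2]{bowditch:quasiflats} together with \cite[Prop.~5.2]{baderbensaidpetyt:from} replaces Huang's theorem. They show that $\iota\hat f(\widehat{F_{K(v)}})$ is a finite union of singular orthants whose boundary is a $(k-1)$--sphere.
\item \cref{prop:branch_complemented_splitting_cone} is applied to $\iota\hat f$.
\end{itemize}
Since singular rays of $\hat Z$ lie in single factors, $\partial_{\mathrm{sing}}\hat Z$ is complete $n$--partite. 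Your colouring argument then goes through with $\hat Z$ in place of $\hat Y$.
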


As with \cref{rem:bg_asrk}, the proof of \cref{rem:bg_hyp} is very similar to that of \cref{thm:beyond_FB_induced_embedding_singboundary}, but this time there is the added subtlety that $Y$ is not itself a CAT(0) cube complex. This can be handled with a similar sleight of hand to the proof of \cite[Cor.~11.6]{baderbensaidpetyt:from} that does not affect the main idea, so we just provide an outline of the necessary changes to the proof of \cref{thm:beyond_FB_induced_embedding_singboundary}.

\begin{proof}[Sketch proof of \cref{rem:bg_hyp}]
Asymptotic cones of hyperbolic spaces are real trees, so $\partial_{\mathrm{sing}}\hat Y$ is a join of $n$ discrete graphs. Moreover, there exist trees $T_1,\dots, T_n$ such that the product $\hat Y$ admits a factor-wise isometric embedding $\iota$ into $\hat Z$, where $Z=\prod_{i=1}^nT_i$. 
To show that $\Gamma$ is $n$--colourable, it therefore suffices to define a combinatorial embedding $\psi:\Gamma\to\partial_{\mathrm{sing}}\hat Z$. 
Since $Z$ is an $n$--dimensional CAT(0) cube complex, we regain access to results from \cite{baderbensaidpetyt:from}, and we follow the construction of $\tilde f$ in the proof of \cref{thm:beyond_FB_induced_embedding_singboundary}.

In the ``complementary degree 0'' case, instead of applying \cite[Thm~1.1]{huang:top} and \cite[Prop.~5.7]{baderbensaidpetyt:from} to a quasiisometric embedding into $Y$, we apply \cite[Prop.~9.20]{baderbensaidpetyt:from} to the composition of the induced bilipschitz embedding $\hat f$ of asymptotic cones with $\iota:\hat Y\to\hat Z$. This shows that $\iota\hat f(\gamma_v)\subset\hat Z$ is a singular geodesic, so we can define $\psi(v)$ to be an endpoint of this geodesic. For the ``cliques to orthants'' property we also use \cite[Prop.~9.20]{baderbensaidpetyt:from} on $\widehat{F'}$ instead of \cite[Cor.~9.21]{baderbensaidpetyt:from} on $F'$ to see that $\iota\hat f(\widehat{F_C})$ is a singular flat. 

In the induction step we again work only with asymptotic cones. Instead of using \cite[Thm~1.1]{huang:top} and \cite[Props.~5.7, 5.3]{baderbensaidpetyt:from}, we apply \cite[Lem.~5.2]{bowditch:quasiflats} to $\iota\hat f(\widehat{F_{K(v)}})$, which tells us that it is equal to a finite union of singular orthants in the product of trees $\hat Z$, and then \cite[Prop.~5.2]{baderbensaidpetyt:from}, which gives that the boundary of $\iota\hat f(\widehat{F_{K(v)}})$ is homeomorphic to $\mathbb S^{k-1}$. We then use those orthants to define $\psi$ on $K(v)$ in the same way that $\tilde f$ is defined in the proof of \cref{thm:beyond_FB_induced_embedding_singboundary}. The remainder of the inductive step is unchanged, except that \cref{prop:branch_complemented_splitting_cone} is applied to $\iota\hat f$ instead of $\hat f$.
\end{proof}

\Cref{rem:bg_hyp} for instance shows that the RAAGs corresponding to the first four graphs in \Cref{fig:exples_branch_graded} cannot be quasiisometrically embedded into any product of three hyperbolic spaces, because those graphs are not 3--colourable.

We conclude this section by providing a large, natural family of branch-graded graphs with clique number three.

\begin{definition}[Boundary-sparse triangulation]
Let $\Sigma=\Sigma_{g,b}$ be a compact surface with genus $g\ge0$ and with $b\ge0$ boundary components. Let $\Lambda$ be a finite simplicial triangulation of $\Sigma$. For a boundary component $B\subset\partial\Sigma$, let $\deg_\Lambda(B)$ denote the tuple obtained by reading cyclically along $B$ the degrees of the vertices of $\Lambda$ that are in $B$. We say that $\Lambda$ is \emph{boundary-sparse} if there is some $B\subset\partial\Sigma$ such that a cyclic permutation of $\deg_\Lambda(B)$ contains either a two or two consecutive threes.
\end{definition}

We call a vertex or edge of a simplicial triangulation of a surface $\Sigma$ \emph{internal} if it is not contained in $\partial\Sigma$.

\begin{observation}\label{obs:opposite_internal}
    Let $\Lambda$ be a finite simplicial triangulation of a compact surface. If $\Lambda$ is not boundary-sparse then every vertex is opposite in some triangle to an internal edge, because otherwise $\Lambda$ would be a cone triangulation of a disc, or obtained from one by contracting an internal edge.
\end{observation}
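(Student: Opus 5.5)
The plan is a local analysis of the star of a vertex $v$. The edges opposite $v$ in triangles are exactly the edges of the link $\mathrm{lk}(v)$. This link is a cycle if $v$ is internal and a path $w_0w_1\cdots w_m$ with $m\ge1$ if $v\in\partial\Sigma$. I would argue by contrapositive. Assume some vertex $v$ has every link edge contained in $\partial\Sigma$, and show that $\Lambda$ is boundary-sparse; in fact I expect $\Sigma$ to be forced to equal the closed star of $v$.

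\emph{Case 1: $v$ internal.} Then $\mathrm{lk}(v)$ is a cycle $w_1\cdots w_m$ with $m\ge3$, and all of its edges are boundary edges. Each $w_i$ then has exactly the two boundary edges $w_{i-1}w_i$ and $w_iw_{i+1}$, so the cycle is an entire boundary component $B$. The closed star of $v$ is a disc whose boundary circle consists of boundary edges of $\Sigma$. Every edge of $\Lambda$ in this disc therefore lies in no further triangle, so the star is open and closed in $\Sigma$. By connectedness, $\Sigma$ is the cone disc. Each $w_i$ then has degree $3$, and $\deg_\Lambda(B)=(3,\dots,3)$ contains two consecutive threes.

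\emph{Case 2: $v\in\partial\Sigma$.} The edges $vw_0$ and $vw_m$ are boundary edges. For $0<i<m$, the edge $vw_i$ lies in two triangles, so it is internal. Hence each such $w_i$ has exactly the boundary edges $w_{i-1}w_i$ and $w_iw_{i+1}$. Then $v,w_0,\dots,w_m$ traverse a whole boundary component $B$, and the same open-and-closed argument shows that $\Sigma$ is the star of $v$. The degrees along $B$ are now explicit. If $m=1$, the vertices $w_0,w_1$ have degree $2$. If $m=2$, the vertex $w_0$ is adjacent only to $v$ and $w_1$, so it has degree $2$. If $m\ge3$, the vertices $w_1$ and $w_2$ both have degree $3$. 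In every case $\Lambda$ is boundary-sparse.

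The main point needing care is the claim that $\Sigma$ coincides with the star of $v$. I would justify it by noting that every edge on the frontier of the star is a boundary edge of $\Sigma$, so no triangle outside the star meets it along an edge. Since $\Sigma$ is a connected surface, it cannot then contain triangles meeting the star only at vertices. This also explains the paper's phrasing about cone discs, possibly with a contracted edge, which is the boundary-vertex case.
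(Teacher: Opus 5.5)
Your proposal is correct and follows the same route as the paper, which only asserts that a vertex with no opposite internal edge forces $\Lambda$ to be the cone disc (your Case 1) or the fan obtained from a cone disc by contracting an internal edge (your Case 2); your link analysis and degree count supply the details the paper omits. One step should be stated more precisely: ruling out triangles that meet the star only at a vertex uses connectedness of vertex links in a surface, not global connectedness of $\Sigma$. Concretely, the link of each $w_i$ is a path whose endpoints correspond to its two boundary edges, so it is exactly $w_{i-1}vw_{i+1}$, or the single edge $vw_1$ (resp.\ $vw_{m-1}$) for $w_0$ (resp.\ $w_m$) in Case 2.
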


\begin{lemma} \label{lem:boundary_sparse}
Let $\Lambda$ be a finite simplicial triangulation of a compact surface $\Sigma$. If $\Lambda$ is not boundary-sparse then it is branch-graded.
\end{lemma}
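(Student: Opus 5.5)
We have to verify the three conditions of \cref{def:bg} for $\Gamma=\Lambda$, a simplicial triangulation of $\Sigma$ with clique number $n=3$. We may assume $\Lambda$ is flag; otherwise there is a $3$--cycle that does not bound a triangle, and we should check that it does not create larger cliques. Since the triangulation is of a surface, the clique number is $3$ unless $\Lambda$ is the boundary of a tetrahedron, which is closed and handled by \cite[Ex.~3.3]{baderbensaidpetyt:quasiisometric:rigidity}. The plan is to classify vertices and edges by whether they are internal, and compute branching and branch-complementation.

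\textbf{Branching cliques.} First record which cliques are branching. A maximal clique here is a triangle, except possibly an edge lying in no triangle, which cannot happen in a triangulation. An internal edge lies in exactly two triangles, and it is their intersection, so it is branching. A boundary edge lies in only one triangle, so it is not branching. Every vertex of degree at least $2$ lies in two triangles. A vertex $v$ is the intersection of some set of triangles containing it exactly when those triangles share no further common vertex. This fails only if all triangles at $v$ share a second vertex, which forces $v$ to be a boundary vertex of degree $2$. Boundary-sparseness excludes degree $2$, so every vertex is branching.

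\textbf{Branch-complemented vertices.} It follows that $v$ is branch-complemented if and only if it is opposite, in some triangle, to an internal edge. By \cref{obs:opposite_internal}, and because $\Lambda$ is not boundary-sparse, every vertex is opposite to some internal edge. Hence every vertex of $\Lambda$ is branch-complemented, so $\Lambda$ is directionally branch-complemented and therefore branch-graded, by the remark after \cref{def:bg}. Concretely:
\begin{itemize}
\item condition~\ref{first_item} holds vacuously;
\item $K(v)=\{v\}$ for every $v$, so condition~\ref{second_item} holds;
\item condition~\ref{third_item} holds through clause~\ref{it:edge_branch_complemented}.
\end{itemize}

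\textbf{Main obstacle.} The substantive point is justifying \cref{obs:opposite_internal}. We need to show that if some vertex $v$ is opposite to no internal edge, then $\Lambda$ is boundary-sparse. I would argue as follows. Every edge opposite $v$ then lies on $\partial\Sigma$, so the link of $v$ consists of boundary edges. If $v$ is internal, its link is a cycle of boundary edges, so that cycle is an entire boundary component. The surface is then a cone on that cycle, a disc, whose boundary vertices all have degree $3$; this is boundary-sparse.

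If instead $v$ is a boundary vertex, its link is a path $u_0,\dots,u_m$ of boundary edges, together with the two boundary edges $vu_0$ and $vu_m$. The star of $v$ is then a full neighbourhood of a boundary arc, so the boundary component through $v$ is $v,u_0,\dots,u_m$. Each interior $u_i$ is adjacent only to $v$ and its two path neighbours, so it has degree $3$. When $m\ge2$ this gives two consecutive threes. In the small cases $m\le1$ we find a vertex of degree $2$, or else the contracted-edge configuration. Either way $\Lambda$ is boundary-sparse, contradicting the hypothesis.

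Care is needed to ensure that non-flag $3$--cycles do not spoil the branching computations. In particular, a vertex's triangles could share a second vertex without that vertex having degree $2$, if the link is not an induced cycle. I would handle this by noting that in a simplicial complex the link of $v$ is a simple cycle or path of length at least $3$ or $2$ respectively. Consequently no vertex other than $v$ lies in all triangles at $v$ unless the link is a single edge.
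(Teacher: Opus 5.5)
There is a genuine gap in your branching computation for vertices. You assert that the triangles at $v$ share a second vertex only when $v$ is a boundary vertex of degree $2$. Your last paragraph likewise says no vertex other than $v$ lies in all triangles at $v$ unless the link is a single edge. Both claims fail when the link of $v$ is a path $u_0u_1u_2$ of length two, i.e.\ when $v$ is a boundary vertex of degree $3$. Then the only triangles at $v$ are $\{v,u_0,u_1\}$ and $\{v,u_1,u_2\}$. Both contain $u_1$, so $\{v\}$ is not an intersection of maximal cliques, and $v$ is neither branching nor branch-complemented.

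Not being boundary-sparse only rules out a $2$ or two \emph{consecutive} $3$'s in $\deg_\Lambda(B)$, so isolated boundary vertices of degree $3$ do occur. For example, triangulate a $4\times4$ square grid with diagonals chosen so that the boundary degrees read $3,4,3,4,\dots$ cyclically. So $\Lambda$ is not directionally branch-complemented in general. Condition~\ref{first_item} is not vacuous, $K_\Lambda(v)\neq\{v\}$ for these vertices, and condition~\ref{third_item} cannot be checked through clause~\ref{it:edge_branch_complemented} alone.

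Handling exactly these vertices is the substance of the paper's proof. Let $v$ be a boundary vertex of degree $3$ with boundary neighbours $v_1,v_3$ and middle link vertex $v_2$.
\begin{itemize}
\item The edge $\{v,v_2\}$ is internal, hence branching.
\item $v_1$ has degree at least $4$, so it is the intersection of two internal edges and is branching. This is a second use of the non-boundary-sparse hypothesis, beyond \cref{obs:opposite_internal}.
\item Hence $\{v,v_2\}$ is a branch-complemented edge, which gives condition~\ref{first_item}.
\item The other two edges at $v$ are boundary edges and not branching, so $K_\Lambda(v)=\{v,v_2\}$.
\end{itemize}
For condition~\ref{third_item}, the boundary edge $\{v,v_1\}$ satisfies clause~\ref{it:edge_complementary}: $v_1$ is branch-complemented with complementary clique $\{v,v_2\}\ni v$. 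The edge $\{v,v_3\}$ is handled symmetrically. The internal edge $\{v,v_2\}$ satisfies clause~\ref{it:edge_Kcomp}.

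The rest of your outline matches the paper: internal edges are branching, vertices of degree at least $4$ are branching, and your argument for \cref{obs:opposite_internal} is essentially fine. Separately, your claim that the clique number is $3$ unless $\Lambda$ is the boundary of a tetrahedron is false, since any interior vertex of degree $3$ spans a $K_4$. The paper tacitly makes the same assumption that the maximal cliques are the triangles, so this is not the essential issue.
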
 

\begin{proof}
\cref{second_item} of the definition is automatic.  

Every internal edge of $\Lambda$ is the intersection of two triangles, so it is branching, and by \cref{obs:opposite_internal}, every vertex of $\Lambda$ is opposite in some triangle to an internal edge. Because $\Lambda$ is simplicial, every internal vertex and every vertex of degree at least four is the intersection of two internal edges, so it is branching and hence branch-complemented.

Thus the only vertices that can fail to be branch-complemented are boundary vertices of degree three. To show \cref{first_item}, we will show that each such vertex lies in a branch-complemented edge. Let $v$ be one such vertex. Let $e_1=\{v,v_1\}$ and $e_2=\{v,v_2\}$ be edges of $\Lambda$ incident to $v$ with $e_1\subset\partial\Sigma$ and $e_2\not\subset\partial\Sigma$. We know that $e_2$ is branching. Since $\Lambda$ is a triangulation and $v$ has degree three, $\{v,v_1,v_2\}$ is a triangle in $\Lambda$. Since $\Lambda$ is not boundary-sparse, $v_1$ has degree at least four, so it is branching. Thus, $e_2$ is branch-complemented. Hence \cref{first_item} is satisfied by $\Lambda$, and $K_{\Lambda}(v)=\{v,v_2\}$, since boundary edges are not branch-complemented.


We can now check \cref{third_item}. Let $\{u,v\}$ be an edge of $\Lambda$. If $u$ and $v$ are branch-complemented, then \cref{it:edge_branch_complemented} holds. If $v$ is not branch-complemented, then it is a boundary vertex of degree three. If $\{u,v\}$ is a boundary edge, then the previous paragraph shows that \cref{it:edge_complementary} is satisfied by $\{u,v\}$. Otherwise the previous paragraph shows that $u\in K_\Lambda(v)$, so \cref{it:edge_Kcomp} is satisfied.
\end{proof}

Combining \cref{thm:beyond_FB_induced_embedding_singboundary,lem:boundary_sparse} we get the following.

\begin{corollary}
Let $\Lambda$ be a simplicial triangulation of a compact surface $\Sigma$ and let $Y$ be a 3--dimensional CAT(0) cube complex. If $\Lambda$ is not boundary-sparse and $A_\Lambda$ can be quasiisometrically embedded into $Y$, then $\Lambda$ is a subgraph of $\partial_{\mathrm{sing}}Y$.
\end{corollary}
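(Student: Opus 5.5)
The corollary follows directly from the two results it cites. The plan is to apply \cref{thm:beyond_FB_induced_embedding_singboundary} with $\Gamma=\Lambda$, regarded as an induced subgraph of itself. The hypotheses must first be put in the right form.

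The first step is to check that $\Lambda$ has clique number $3$. Since $\Lambda$ is a simplicial triangulation of a compact surface, it contains a triangle, so its clique number is at least $3$. We also need that it is exactly $3$. For a simplicial triangulation the 2--simplices are exactly the triangles of the surface, and a $4$--clique would require four vertices pairwise joined by edges. In a flag sense this could happen: a $4$--cycle of edges with both diagonals, for instance, gives a clique that is not a simplex. This point needs care. Here I would argue that any $4$--clique in the $1$--skeleton of a triangulated surface is either the boundary of a tetrahedron embedded as the whole surface, namely $\mathbb S^2$ triangulated as the tetrahedron boundary, or is impossible. Alternatively, I would note that the paper implicitly treats such triangulations as having clique number three, as in its earlier statement that for $n=3$ the branch-graded class ``includes most simplicial triangulations of compact surfaces''. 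For the tetrahedron boundary, the clique number is $4$ and the graph is the complete graph $K_4$. In that case every vertex lies in the unique maximal $4$--clique, and the statement would be read with $n=4$. I expect the intended reading is that $n=3$ and $Y$ is $3$--dimensional, so I would simply record that the clique number is $3$ in the cases under consideration. This bookkeeping is the only potential obstacle.

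Next, by \cref{lem:boundary_sparse}, since $\Lambda$ is not boundary-sparse it is branch-graded. That is, $\Lambda$ is branch-graded as an induced subgraph of itself, with all conditions of \cref{def:bg} taken in $\Lambda$.

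Finally, the quasiisometric embedding $A_\Lambda\to Y$ into the $3$--dimensional CAT(0) cube complex $Y$ lets us invoke \cref{thm:beyond_FB_induced_embedding_singboundary} with $n=3$ and $\Gamma=\Lambda$. This yields a combinatorial embedding $g:\Lambda\to\partial_{\mathrm{sing}}Y$. Its image exhibits $\Lambda$ as a subgraph of $\partial_{\mathrm{sing}}Y$: $g$ is injective on vertices and sends edges to edges. That is exactly the conclusion.
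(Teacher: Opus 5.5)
\paragraph{Verdict.} Your main line is the paper's argument: \cref{lem:boundary_sparse} gives that $\Lambda$ is branch-graded, and \cref{thm:beyond_FB_induced_embedding_singboundary} is then applied with $\Gamma=\Lambda$ and $n=3$. The paper's proof is just this one-line combination.

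\paragraph{The clique-number step.} This is the step you yourself flag as the only obstacle, and your proposed handling of it is wrong.
\begin{itemize}
\item The claim that a $4$--clique in the $1$--skeleton of a triangulated surface can only come from the tetrahedron boundary is false. Stellarly subdividing one face of the tetrahedron boundary gives a $5$--vertex simplicial triangulation of $\mathbb S^2$ whose $1$--skeleton still contains the original $K_4$. The $6$--vertex $\mathbb{RP}^2$ and the $7$--vertex torus are simplicial triangulations with $1$--skeleta $K_6$ and $K_7$.
\item The case $\omega_\Lambda\ge4$ cannot be ``read with $n=4$''. \cref{thm:beyond_FB_induced_embedding_singboundary} needs $\dim Y$ to equal the clique number, and $Y$ is $3$--dimensional.
\item \cref{lem:boundary_sparse} genuinely needs $\omega_\Lambda=3$: its proof uses that triangles are maximal cliques, so that internal edges are branching. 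For $\Lambda=K_4$ the only branching clique is $K_4$ itself, so \cref{first_item} of \cref{def:bg} fails.
\end{itemize}
So ``recording that the clique number is $3$'' is an unjustified assumption, not a step of the proof.

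\paragraph{The fix.} It is short.
\begin{itemize}
\item If $\omega_\Lambda\ge4$, a standard $4$--flat gives an isometric embedding $\Z^4\to A_\Lambda$. Composing with the given map yields a quasiisometric embedding $\Z^4\to Y$. This is impossible because $Y$ is $3$--dimensional, so its asymptotic rank is at most $3$ and no asymptotic cone of $Y$ contains a biLipschitz copy of $\R^4$. Hence the hypothesis is vacuous in this case.
\item Otherwise $\omega_\Lambda=3$, since any triangle gives a $3$--clique. In that case \cref{lem:boundary_sparse} and \cref{thm:beyond_FB_induced_embedding_singboundary} apply exactly as you describe.
\end{itemize}
The paper leaves this reduction implicit as well. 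With it added, and the false claim about $4$--cliques removed, your proof is complete.
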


\section{Quasiisometric embeddings into products of trees} \label{sec:embeddings_into_trees}

In this short section we use the results of \cref{sec:beyond_FB_2} to obstruct embeddings of RAAGs into products of trees and hyperbolic spaces, establishing \cref{mthm:rull}. We first consider the general branch-graded case (\cref{thm:fb_in_product_trees}), then refine it in the 2--dimensional setting (\cref{cor:fb_into_products_of_trees}). 

In the following, we allow trees to have infinite valence. We think of $(F_2)^n$ as a product of 4--regular trees. 

\begin{theorem} \label{thm:fb_in_product_trees}
Let $A_\Gamma$ be a branch-graded, $n$--dimensional RAAG. The following are equivalent.
\begin{enumerate}
\item   $\Gamma$ is $n$--colourable.\label{item:n_colourable}
\item   $A_\Gamma$ can be equivariantly isometrically embedded in a product of $n$ trees. \label{item:ie_in_trees}
\item   $A_\Gamma$ can be quasiisometrically embedded in $(F_2)^n$.\label{item:qie_in_F_2^n}
\item   $A_\Gamma$ can be quasiisometrically embedded in a product of $n$ hyperbolic spaces. \label{item:qie_in_trees}
\end{enumerate}
\end{theorem}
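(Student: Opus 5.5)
We will prove the cycle of implications \ref{item:n_colourable}$\Rightarrow$\ref{item:ie_in_trees}$\Rightarrow$\ref{item:qie_in_F_2^n}$\Rightarrow$\ref{item:qie_in_trees}$\Rightarrow$\ref{item:n_colourable}. The last implication is exactly \cref{rem:bg_hyp}, applied with $\Lambda=\Gamma$, since $\Gamma$ is branch-graded as an induced subgraph of itself. So the work lies in the first two implications; the third is immediate because $F_2$ is quasiisometric to a $4$--regular tree, which is hyperbolic.

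For \ref{item:n_colourable}$\Rightarrow$\ref{item:ie_in_trees}, fix a colouring $c:\Gamma\to\{1,\dots,n\}$. Let $\Gamma_i=c^{-1}(i)$, an edgeless graph, so $A_{\Gamma_i}$ is free. Let $T_i$ be the Cayley tree of $A_{\Gamma_i}$; it may be locally infinite if $\Gamma$ is infinite. Since adjacent vertices have different colours, the map sending a generator $s\in\Gamma_i$ to $s$ in the $i$th factor and to $1$ in the others defines a homomorphism $\phi:A_\Gamma\to\prod_i A_{\Gamma_i}$, because commuting generators land in different factors. The group $A_\Gamma$ then acts on $\prod_i T_i$ via $\phi$, and we take the orbit map of the base vertex. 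This is equivariant, and each generator moves the base point by one edge in one factor, so the map is $1$--Lipschitz for the $\ell^1$ metric.

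The main obstacle is showing that the orbit map is isometric, i.e.\ that $|\phi(g)|_{\ell^1}=|g|_S$. Here $|\phi(g)|$ is the sum over $i$ of the word lengths of the projections $\pi_i(g)$, where $\pi_i$ deletes letters outside $\Gamma_i$. Take a reduced word $w$ for $g$, in the sense that no two letters $s^{\pm1}$ and $s^{\mp1}$ can be brought together by commutations. Suppose the subword $\pi_i(w)$ were not freely reduced. Then some $s^{\pm1}$ and $s^{\mp1}$ with colour $i$ would have, between them, only letters of other colours together with a cancelling block of colour $i$. Choosing an innermost such pair, all letters between them have different colour from $s$. We need those letters to commute with $s$ in order to cancel in $A_\Gamma$, and a different colour alone does not guarantee this. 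So we instead count hyperplanes: the length $|g|_S$ equals the number of hyperplanes of $X_\Gamma$ separating $1$ from $g$, and each hyperplane is labelled by a vertex. We will argue that the hyperplanes labelled by colour-$i$ vertices, under the map $X_\Gamma\to T_i$, correspond bijectively to the edges of $T_i$ separating the images. This uses the standard normal-form fact that, in a reduced word, $\pi_i(w)$ is freely reduced. The reason is that any two letters of the same colour with no same-coloured letter between them cannot be adjacent-cancellable in the free group unless they form an $s^{\pm1}s^{\mp1}$ pair. Such a pair would cancel in $A_\Gamma$ only if the intermediate letters commute with $s$; and if they do not, the pair does not cancel in $\pi_i$ either, since an intermediate letter of colour $i$ would be needed. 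We expect to phrase this cleanly via the retraction $A_\Gamma\to A_{\Gamma_i}$ together with the hyperplane count. Summing over $i$ gives equality.

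For \ref{item:ie_in_trees}$\Rightarrow$\ref{item:qie_in_F_2^n}, it suffices to quasiisometrically embed each tree factor into the $4$--regular tree $T_4$, i.e.\ into $F_2$. For locally finite trees this is classical, but our $T_i$ may have infinite valence, so we handle that case directly. We embed a locally infinite simplicial tree bi-Lipschitz into $T_4$ by replacing each vertex's star with a subtree of $T_4$ such that the children occupy distinct branches at uniformly bounded depth. This is impossible with uniform constants for infinite valence. We therefore avoid it: since $A_{\Gamma_i}$ is free, if $\Gamma$ is finite we use that finite-rank free groups embed quasiisometrically in $F_2$ as subgroups, and if $\Gamma$ is infinite we note that the statement concerns the finite-dimensional setting, where we would restrict to finitely generated $A_\Gamma$.
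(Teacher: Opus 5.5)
There is a genuine gap, and it is in your first step. The colouring homomorphism $\phi:A_\Gamma\to\prod_iA_{\Gamma_i}$ is well defined. But it is not injective in general, so the orbit map into the Cayley trees of the $A_{\Gamma_i}$ is not an isometric embedding.

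Concretely, take $\Gamma=C_6$ with vertices $v_1,\dots,v_6$ in cyclic order, coloured by parity. This graph is branch-graded and $2$--colourable. Since $v_1$ and $v_4$ are not adjacent, $g=v_1v_4v_1^{-1}v_4^{-1}$ has length $4$ in $A_{C_6}$, yet $\phi(g)=(v_1v_1^{-1},v_4v_4^{-1})=1$. So the ``standard normal-form fact'' you invoke is false: this word is reduced in $A_\Gamma$, but its colour-$1$ projection is not freely reduced. Your own worry, that a different colour does not force commutation, was exactly right and cannot be argued away.

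Nor can the idea be rescued with some other locally finite trees. For finite $\Gamma$ your trees are locally finite, and \cref{cor:ie_prod_free} shows that $A_\Gamma$ isometrically embeds in a finite product of locally finite trees only if it is already a product of free groups. Moreover, $\phi$ sends standard geodesics to standard geodesics, so any quasiisometric embedding of this form would be stable, which \cref{cor:no_stable_Rull} rules out for the same groups.

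The correct proof of (1)$\Rightarrow$(2), as in \cref{lem:colourable_isometric_embedding}, takes $T_i$ to be the tree dual to the hyperplanes of $X_\Gamma$ labelled by colour-$i$ vertices; these pairwise do not cross. The isometry then follows because $|g|_S$ counts separating hyperplanes. But these trees are in general locally infinite. Each vertex of $T_i$ corresponds to a coset of $A_{\Gamma\ssm\Gamma_i}$, and that coset is adjacent to infinitely many colour-$i$ hyperplanes as soon as some colour-$i$ vertex is non-adjacent to some vertex of another colour. So, as you yourself noticed, there is no factor-by-factor passage from (2) to (3).

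The paper instead obtains (1)$\Rightarrow$(3) directly from Rull's theorem, whose Alice's diary embedding into $(F_2)^n$ is necessarily non-stable; that is the missing ingredient in your cycle. Your steps (3)$\Rightarrow$(4) and (4)$\Rightarrow$(1), the latter via \cref{rem:bg_hyp} with $\Lambda=\Gamma$, agree with the paper.
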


The following lemma shows that \ref{item:n_colourable} implies \ref{item:ie_in_trees}. The lemma is well known; see \cite{dranishnikovjanuszkiewicz:every,button:groups,holloway:embeddings}. We include a proof for completeness.

\begin{lemma} \label{lem:colourable_isometric_embedding}
If $\Gamma$ is an $n$--colourable graph, then $A_\Gamma$ can be equivariantly isometrically embedded in a product of $n$ trees.
\end{lemma}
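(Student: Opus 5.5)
Fix an $n$--colouring $c:V(\Gamma)\to\{1,\dots,n\}$, and write $\Gamma_i=c^{-1}(i)$ for the colour classes. Each $\Gamma_i$ is an edgeless induced subgraph, so $A_{\Gamma_i}$ is the free group on $\Gamma_i$. Its standard Cayley graph $T_i$ is a tree, possibly of infinite valence. The plan is to define the homomorphism
\[
\phi=(\phi_1,\dots,\phi_n):A_\Gamma\to A_{\Gamma_1}\times\cdots\times A_{\Gamma_n},
\]
where $\phi_i$ is the retraction that sends each $v\in\Gamma_i$ to itself and kills every generator of every other colour. Killing generators respects all commutation relations, so each $\phi_i$ is a well-defined homomorphism. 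We let $A_\Gamma$ act on $T_1\times\cdots\times T_n$ through $\phi$, and take as the embedding the orbit map $g\mapsto\phi(g)$, which is automatically equivariant. We equip the product of trees with the $\ell^1$ metric, so that it is the product cube complex whose vertex distance counts separating hyperplanes. This matches the $d_1$ convention, and with it the embedding should be isometric on the standard Cayley graph.

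The main step is to show $d_{T_1\times\cdots\times T_n}(\phi(1),\phi(g))=|g|_S$ for every $g$. The inequality $\leq$ is immediate, since each generator moves exactly one coordinate by one edge. For $\geq$, I would take a reduced word $w$ for $g$ and use the normal form theory for RAAGs: $|g|_S$ is the number of letters in any reduced word. I claim that the subword $w_i$ obtained by deleting all letters outside $\Gamma_i$ is freely reduced in $A_{\Gamma_i}$. Suppose instead that $w_i$ contained a cancelling pair $v^{\pm1}\dots v^{\mp1}$ adjacent in $w_i$, with $v\in\Gamma_i$. Every letter between them in $w$ would then have a colour other than $i$. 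But a letter $u$ lying between two occurrences of $v$ in $w$ either commutes with $v$ or not. If every such $u$ commutes with $v$, then the pair can be shuffled together and cancelled, contradicting reducedness of $w$.

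This is the delicate point: a letter $u$ between them need not commute with $v$. For example, take $w=vuv^{-1}$ with $u$ not adjacent to $v$ and of another colour. Then $w_i=vv^{-1}$ is not reduced, yet $w$ is. So the naive product of retractions is not isometric, and the construction has to be refined; this is where I expect the real work to lie.

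The fix is to replace the retraction trees by Bass--Serre-type trees. For each colour $i$, let $T_i$ be the tree whose hyperplanes are the $A_\Gamma$--orbits of hyperplanes of $X_\Gamma$ dual to edges labelled by colour $i$. Concretely, $T_i$ is the restriction quotient (in Caprace--Sageev's sense) of $X_\Gamma$ onto the hyperplanes of colour $i$. Two hyperplanes of the same colour carry labels that are non-adjacent in $\Gamma$, so they never cross, and the restriction quotient is therefore a CAT(0) cube complex with no crossings, that is, a tree. The map $X_\Gamma\to T_1\times\cdots\times T_n$ is the product of the quotient maps, and it is $A_\Gamma$--equivariant because the colour partition of the hyperplanes is $A_\Gamma$--invariant. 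Every hyperplane of $X_\Gamma$ has exactly one colour, so the hyperplanes separating two vertices $x,y$ are partitioned by colour. Since the restriction quotient preserves separation for the retained hyperplanes, we get
\[
d_1(x,y)=\sum_i d_{T_i}(x,y).
\]
This gives the isometric embedding into the $\ell^1$ product, and injectivity on vertices follows. The main obstacle is checking the restriction-quotient facts: that the quotient is a tree, and that separation is preserved.
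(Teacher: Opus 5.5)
Your final construction is exactly the paper's. The family $H_i$ of hyperplanes of $X_\Gamma$ dual to colour-$i$ edges is $A_\Gamma$--invariant and pairwise non-crossing, so its dual (restriction-quotient) tree $T_i$ receives an equivariant map, and partitioning the separating hyperplanes by colour gives $d_1(x,y)=\sum_i d_{T_i}(x,y)$. The vertices of $T_i$ are the components of $X_\Gamma\ssm\bigcup_{h\in H_i}h$, and its hyperplanes are the elements of $H_i$ themselves, not their $A_\Gamma$--orbits as your first phrasing suggests. Your counterexample $vuv^{-1}$ to the naive product of retractions is correct, so discarding that approach was the right call.
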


\begin{proof}
Recall that $A_\Gamma$ is the 0--skeleton of the CAT(0) cube complex $X_\Gamma$. Given an $n$--colouring of $\Gamma$ with colours $c_1,\dots,c_n$, let $H_i$ be the set of hyperplanes of $X_\Gamma$ dual to edges labelled by generators corresponding to vertices of colour $i$. Note that no two elements of $H_i$ cross, and each $H_i$ is $A_\Gamma$-invariant. 

Let $T_i$ denote the graph with: a vertex $v_C$ for each connected component $C$ of the complement $X_\Gamma\ssm\bigcup_{h\in H_i}h$; and an edge from $v_C$ to $v_{C'}$ whenever $C$ and $C'$ are separated by a single hyperplane. Since no two elements of $H$ cross, the graph $T_i$ is a tree. (This is a special case of Sageev's construction.) 

There is a natural 1--Lipschitz map $f_i:A_\Gamma\to T_i$ given by sending $x\in A_\Gamma\subset X_\Gamma$ to the vertex $v_C$ of $T_i$ such that $x\in C$. This map is $A_\Gamma$-equivariant, because $H_i$ is $A_\Gamma$-invariant. Assembling the $f_i$ together gives an equivariant map $f:A_\Gamma\to\prod_{i=1}^nT_i$. The distance between two elements of $A_\Gamma$ is equal to the number of hyperplanes that separate them, which is easily seen to be the same as the distance between their images under $f$. Thus $f$ is an isometric embedding.
\end{proof}

The trees constructed in the proof of \cref{lem:colourable_isometric_embedding} are almost always locally infinite.

\begin{proof}[Proof of \cref{thm:fb_in_product_trees}]
If $\Gamma$ is $n$--colourable, then \cref{lem:colourable_isometric_embedding} shows that $A_\Gamma$ isometrically embeds in a product of $n$ trees, and Rull's theorem \cite[Thm~1.1]{rull:embedding} shows that $A_\Gamma$ quasiisometrically embeds in $F_2^n$. Thus \ref{item:n_colourable} implies \ref{item:ie_in_trees} and \ref{item:qie_in_F_2^n}.
It is obvious that \ref{item:ie_in_trees} and \ref{item:qie_in_F_2^n}, both imply \ref{item:qie_in_trees}. The fact that \ref{item:qie_in_trees} implies \ref{item:n_colourable} is given by \cref{rem:bg_hyp}.
%
\end{proof}

Note that only the implication \ref{item:qie_in_trees} $\Rightarrow$ \ref{item:n_colourable} used the branch-graded assumption.


A graph is 2-colourable if and only if it does not contain an induced odd cycle. Since cycles of length at least four are branch-graded (see \cite[\S3]{baderbensaidpetyt:quasiisometric:rigidity}), the same argument as in \cref{thm:fb_in_product_trees} gives the following. 

\begin{corollary} \label{cor:fb_into_products_of_trees}
Let $A_\Gamma$ be a right-angled Artin group. The following are equivalent. 
\begin{enumerate}
\item   $\Gamma$ is $2$--colourable.
\item   $A_\Gamma$ can be equivariantly isometrically embedded in a product of two trees. 
\item   $A_\Gamma$ can be quasiisometrically embedded in $F_2\times F_2$.
\item   $A_\Gamma$ can be quasiisometrically embedded in a product of two hyperbolic spaces. 
\end{enumerate}
\end{corollary}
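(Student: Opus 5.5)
The plan is to run the cycle of implications $(1)\Rightarrow(2)$, $(1)\Rightarrow(3)$, $(2)\Rightarrow(4)$, $(3)\Rightarrow(4)$, $(4)\Rightarrow(1)$, as in the proof of \cref{thm:fb_in_product_trees}. If $\Gamma$ has no edges, then $A_\Gamma$ is free and every statement holds trivially, so I will assume the clique number is $2$; the case where $\Gamma$ contains a triangle needs a separate remark, discussed below. Then $(1)\Rightarrow(2)$ is \cref{lem:colourable_isometric_embedding} with $n=2$, and $(1)\Rightarrow(3)$ is Rull's theorem \cite[Thm~1.1]{rull:embedding}. The implications $(2)\Rightarrow(4)$ and $(3)\Rightarrow(4)$ hold because trees, and hence $F_2$, are hyperbolic, and an isometric embedding is a quasiisometric embedding.

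For $(4)\Rightarrow(1)$ I will argue by contraposition. Suppose $\Gamma$ is not $2$--colourable. Then it contains an induced odd cycle $\Delta$. If $\Delta$ is a triangle, then $\mathbb Z^3$ embeds quasiisometrically in $A_\Gamma$, and hence in a product of two hyperbolic spaces. This is impossible: passing to asymptotic cones would give a biLipschitz embedding of $\R^3$ into a product of two real trees, which has topological dimension $2$. So $\Delta$ is an odd cycle of length at least five, and in particular is triangle-free with clique number $2$.

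Standard subgroups of RAAGs are isometrically embedded and convex, so $A_\Delta\le A_\Gamma$ is quasiisometrically embedded. Composing with the embedding from $(4)$ gives a quasiisometric embedding of $A_\Delta$ into a product of two hyperbolic spaces. By \cite[\S3]{baderbensaidpetyt:quasiisometric:rigidity}, cycles of length at least four are branch-graded. So \cref{rem:bg_hyp}, applied with $\Lambda=\Gamma'=\Delta$ and $n=2$, shows that $\Delta$ is $2$--colourable. This contradicts the fact that $\Delta$ is an odd cycle.

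The main care needed is in this last implication. We must restrict to the subgraph $\Delta$ instead of applying \cref{rem:bg_hyp} to $\Gamma$ itself, because $\Gamma$ need not be branch-graded. We must also justify the quasiisometric embedding of the standard subgroup $A_\Delta$ and handle the triangle case. Everything else is quoted directly.
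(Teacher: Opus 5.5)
Your proposal is correct and is essentially the paper's argument: the same cycle of implications as in \cref{thm:fb_in_product_trees}, with $(4)\Rightarrow(1)$ coming from an induced odd cycle, the fact that cycles of length at least four are branch-graded, and \cref{rem:bg_hyp}. You pass to the isometrically embedded special subgroup $A_\Delta$ and rule out triangles by a topological-dimension count in asymptotic cones; both steps just make explicit what the paper's one-line proof leaves implicit (and, as in the paper, $(1)\Rightarrow(3)$ via Rull's theorem tacitly needs $\Gamma$ finite, so your edgeless case should be read with that caveat).
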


The following can be viewed as a geometric version of \cite[Cor.~8.1(2)]{kimkoberda:embedability}, which states that if $\Gamma$ and $\Lambda$ are finite graphs such that $\Lambda$ is bipartite and $A_\Gamma<A_\Lambda$, then $\Gamma$ is bipartite.

\begin{corollary}
Let $\Lambda$ be a bipartite graph. If a right-angled Artin group $A_\Gamma$ quasiisometrically embeds in $A_\Lambda$, then $\Gamma$ is bipartite. 
\end{corollary}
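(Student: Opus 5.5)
The plan is to reduce to \cref{cor:fb_into_products_of_trees} by transferring 2--colourability from $\Lambda$ to a receiving product of trees. We may assume $\Lambda$ has an edge; otherwise $A_\Lambda$ is free, hence hyperbolic. A quasiisometric embedding of $A_\Gamma$ into a free group forces $A_\Gamma$ to be hyperbolic. A hyperbolic RAAG has no $\mathbb Z^2$ subgroup, so $\Gamma$ is edgeless and trivially bipartite.

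So suppose $\Lambda$ is bipartite with at least one edge. Fix a 2--colouring of $\Lambda$. By \cref{lem:colourable_isometric_embedding}, applied with $n=2$, $A_\Lambda$ equivariantly isometrically embeds in a product $T_1\times T_2$ of two trees. Composing with the given quasiisometric embedding $A_\Gamma\to A_\Lambda$ yields a quasiisometric embedding $A_\Gamma\to T_1\times T_2$. Trees are hyperbolic spaces, so $A_\Gamma$ quasiisometrically embeds in a product of two hyperbolic spaces.

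The remaining step, and the only real obstacle, is that \cref{cor:fb_into_products_of_trees} is stated without a hypothesis on the clique number of $\Gamma$. Its proof, however, implicitly uses that $\Gamma$ is 2--dimensional. I therefore need to show first that $\Gamma$ is triangle-free. Suppose $\Gamma$ contained a triangle. Then $\mathbb Z^3\le A_\Gamma$, giving a quasiisometric embedding $\mathbb Z^3\to T_1\times T_2$. Passing to asymptotic cones gives a biLipschitz embedding $\mathbb R^3\to \hat T_1\times\hat T_2$, a product of two real trees. This is impossible because the target has topological dimension $2$. The same conclusion should also follow from the asymptotic rank being $2$, as in \cite{baderbensaidpetyt:from}.

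Once $\Gamma$ is known to be triangle-free, I argue by contradiction. If $\Gamma$ were not bipartite, it would contain an induced odd cycle $C$. If $C$ is a triangle, we are done by the previous step. Otherwise $C$ has length at least five, so it is branch-graded. The standard subgroup $A_C$ is quasiisometrically embedded in $A_\Gamma$, as standard subgroups of RAAGs are convex. Hence $A_C$ quasiisometrically embeds in a product of two hyperbolic spaces. \Cref{rem:bg_hyp} with $n=2$ then says $C$ is 2--colourable, contradicting that it is an odd cycle. Hence $\Gamma$ is bipartite.
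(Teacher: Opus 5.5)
Your proposal is correct and follows the paper's route. You embed $A_\Lambda$ in a product of two trees via the 2--colouring (\cref{lem:colourable_isometric_embedding}) and then apply the two-dimensional obstruction (\cref{cor:fb_into_products_of_trees}, i.e.\ \cref{rem:bg_hyp} applied to induced odd cycles of length at least five), unpacking that corollary and explicitly handling the triangle case it leaves implicit; your separate treatment of edgeless $\Lambda$ is unnecessary, since an edgeless graph is already 2--colourable.
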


\begin{proof}
\cref{cor:fb_into_products_of_trees} states that $A_\Lambda$ quasiisometrically embeds in a product of two trees, so $A_\Gamma$ must do also, and then the same corollary tells us that $\Gamma$ is bipartite.
\end{proof}



We are still left with some unanswered questions.

\begin{question}
Is the assumption that $\Gamma$ is branch-graded necessary in \cref{thm:fb_in_product_trees}? 
\end{question}

In view of \cref{lem:colourable_isometric_embedding} and \cite{rull:embedding}, this is essentially equivalent to asking whether every $n$--dimensional RAAG that quasiisometrically embeds in a product of $n$ trees has $n$--colourable defining graph. For example, does $A_{C_5} \times \mathbb Z$, whose defining graph is the cone over $C_5$ and is not $3$-colourable, quasiisometrically embed into a product of three trees? The closest we can get to answering this question is by adding three external edges (see the first two examples in \Cref{fig:exples_branch_graded}). This makes the cone a branch-graded subgraph, and hence the corresponding RAAGs do not quasiisometrically embed into a product of three trees by \cref{rem:bg_hyp}. 

A second aspect that is of interest is the requirement that the domain and codomain have the same dimension. In general, quasiisometric embeddings between objects of different dimensions are hard to understand, but perhaps the following can be answered.

\begin{question}
Suppose $\Gamma$ is triangle-free but not 2--colourable. Can $A_\Gamma$ be quasiisometrically embedded in a product of three trees?
\end{question}

In particular, we do not know how many trees are needed to quasiisometrically embed the RAAG on a triangle-free graph of chromatic number four.

\section{Isometric embeddings into products of trees}\label{sec:isometric_embeddings}

In view of \cref{thm:fb_in_product_trees}, it is natural to ask about isometrically embedding RAAGs in finite products of finite-valence trees. In this section, we show that this is only possible if the RAAG is itself a product of free groups; see \cref{cor:ie_prod_free}.

\begin{proposition} \label{prop:no_ie_in_finite_valence}
The group $\mathbb{Z}^2*\mathbb{Z}=\langle a,b,c\mid aba^{-1}b^{-1}\rangle$ with its standard word metric does not admit an isometric embedding in a finite product of locally finite trees with the $\ell^1$--metric.
\end{proposition}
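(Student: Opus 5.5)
The approach is to convert an isometric embedding into the $\ell^1$ product into a combinatorial structure on the hyperplanes of $X_\Gamma$, where $\Gamma$ is the graph with vertices $a,b,c$ and one edge $\{a,b\}$. We then show that local finiteness of the trees is incompatible with how the $c$--hyperplanes sit along the flat $\langle a,b\rangle$.

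\emph{Step 1 (coordinates are monotone along geodesics).} Suppose $f=(f_1,\dots,f_m):A_\Gamma\to T_1\times\cdots\times T_m$ is an isometric embedding, where $d=\sum_i d_{T_i}$. Take $x,y,z$ with $y$ on a word geodesic from $x$ to $z$. Then the equality $d(x,z)=d(x,y)+d(y,z)$, together with the triangle inequality in each $T_i$, forces $d_{T_i}(f_ix,f_iz)=d_{T_i}(f_ix,f_iy)+d_{T_i}(f_iy,f_iz)$ for every $i$. Hence each $f_i$ maps geodesic edge paths to non-backtracking paths with pauses. Adjacent vertices of the Cayley graph have images at $\ell^1$--distance $1$, so each edge of $X_\Gamma$ moves exactly one coordinate, by exactly one tree edge.

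\emph{Step 2 (tree edges are hyperplanes).} For an edge $e$ of $T_i$, the preimage under $f_i$ of the two components of $T_i\ssm e$ gives a partition of $A_\Gamma$. By Step 1 both parts are convex, since geodesics cannot leave a side and return. In the median graph $X_\Gamma^{(1)}$, a convex bipartition separating some pair of adjacent vertices is exactly the pair of halfspaces of a hyperplane. Counting separating hyperplanes against separating tree edges shows the following: each hyperplane $h$ of $X_\Gamma$ corresponds to a unique coordinate $i(h)$ and a unique edge of $T_{i(h)}$, and $f_i$ factors through the dual tree of the class $H_i=\{h: i(h)=i\}$. Two hyperplanes in the same class do not cross, because in a tree two edge-cuts are nested. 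As in \cref{lem:colourable_isometric_embedding}, the component of $X_\Gamma\ssm\bigcup H_i$ containing a vertex determines $f_i$ of that vertex. Distinct class-$i$ hyperplanes adjacent to a single such component therefore give distinct edges at a single vertex of $T_i$. So it suffices to find a class $i$ and a complementary component of $\bigcup H_i$ that is adjacent to infinitely many hyperplanes of $H_i$.

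\emph{Step 3 (finding infinite valence).} For $g\in\langle a,b\rangle\cong\Z^2$, let $h_g$ be the hyperplane dual to the edge $(g,gc)$. These are pairwise distinct and pairwise non-crossing. Every $a$--hyperplane of the flat crosses every $b$--hyperplane, so a class contains flat hyperplanes of at most one of the two types. Colour $\Z^2$ by $g\mapsto i(h_g)$. On the $b$--line $\{a^kb^j\}_j$, some colour $i$ occurs infinitely often. If $H_i$ contains no $b$--hyperplane of the flat, then no class-$i$ hyperplane separates points of this line; here one must also check that no $c$--hyperplane does. So all these $h_g$ are adjacent to one component, and $T_i$ has a vertex of infinite valence, a contradiction. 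Symmetrically for $a$--lines.

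The remaining case is the main obstacle: on every $b$--line the recurring colours are of ``$b$--type'', and on every $a$--line they are of ``$a$--type''. A pure pigeonhole argument on $\Z^2$ does not immediately contradict this, so a finer count is needed. I would try the following. In a large $N\times N$ box, some class $i$ owns at least $N^2/m$ of the $h_g$. Say $H_i$ contains no $b$--hyperplanes. The class-$i$ $a$--hyperplanes cut the box into vertical strips, and the $h_g$ within one strip share a tree vertex. Finite valence bounds each strip's count by some $D$, but the valence bound is not uniform. So I would first reduce to uniformly bounded valence. Using equivariance is not available, so instead I would use that the finitely many neighbours of $f_i(1)$ bound the number of $c$--hyperplanes adjacent to the region of $1$, and translate the box to be centred at points whose images hit such vertices. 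If that fails, the fallback is to exploit the diagonal geodesics $a^kb^k$ together with Step 1 to pin down the colouring rigidly, for instance forcing it to be constant along diagonals, which contradicts the two-type dichotomy.
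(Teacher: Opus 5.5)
\paragraph{The remaining case of Step 3 is a genuine gap.} Your Steps 1--2 are a sound reformulation. This grants, as the paper also implicitly does, that group elements go to vertices, so each edge moves one coordinate by one tree edge. With that, an embedding amounts to a partition of the hyperplanes of $X_\Gamma$ into finitely many pairwise non-crossing classes with locally finite dual trees, and the first case of Step 3 is correct.

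The problem is the ``remaining case''. No finer count inside the flat can close it, because it actually occurs. Put all $a$--hyperplanes of $\langle a,b\rangle$ in class $1$, all $b$--hyperplanes in class $2$, and $h_{a^kb^j}$ in class $1$ if $|k|\ge|j|$ and in class $2$ otherwise. Concretely:
\begin{itemize}
\item let $T_1$ be a line with one extra leaf at its $k$-th vertex for each $j$ with $|j|\le|k|$;
\item let $T_2$ be a line with one leaf at its $j$-th vertex for each $k$ with $|k|<|j|$;
\item send $a^kb^j\mapsto(k,j)$, and send $a^kb^jc$ to the corresponding leaf of $T_1$ or $T_2$, keeping the other coordinate.
\end{itemize}
One checks directly that this is an isometric embedding into a product of two locally finite trees of the subset $\langle a,b\rangle\cup\langle a,b\rangle c\subset A_\Gamma$, whose induced metric has $d(gc,g'c)=|g^{-1}g'|+2$.

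Everything your Step 3 and its proposed repairs examine depends only on the restriction of $f$ to this subset. That includes the types of flat hyperplanes, the colours of the $h_g$, strips in a box, translated boxes, and diagonal geodesics of the flat. So none of it can yield a contradiction. In particular, the valences here grow like $2|k|+1$, so the hoped-for reduction to uniformly bounded valence is false.

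\paragraph{What is missing.} You need to leave the flat through the $c$--edges and exploit the free-product structure across infinitely many cosets. The paper does this as follows.
\begin{itemize}
\item Let $T_{i_1}$ carry the edge $(1,a)$. Every $b$--hyperplane along $\langle b\rangle$ crosses the one dual to $(1,a)$, so all of $\langle b\rangle$ maps to one vertex $p$ of $T_{i_1}$.
\item Local finiteness then forces infinitely many edges $(b^k,b^kc)$ into some other tree.
\item Words such as $a^{-m}c^{-1}b^{k'-k}ca^{m'}$ are geodesic, so their images cannot cross a hyperplane twice. Pigeonholing over the finitely many edges at $p$ then shows that only finitely many of the rays $b^kc\langle a\rangle^+$, $b^kc\langle b\rangle^+$ use $T_{i_1}$.
\item Hence some coset $b^{k_1}c\langle a,b\rangle$ has its $a$--edge in a new tree $T_{i_2}$.
\item Iterating along $b^{k_1}cb^{k_2}c\cdots$, with local finiteness of all previously found trees used at each stage, forces infinitely many distinct trees.
\end{itemize}
So the contradiction is with the \emph{number} of coordinates, accumulated across deeper and deeper cosets. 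A single-flat analysis cannot detect it, since that configuration is realisable with two trees.
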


\begin{proof}
Let $A=\sgen{a,b,c\mid aba^{-1}b^{-1}}\cong \mathbb{Z}^2*\mathbb{Z}$.
Suppose that $f:A\to\prod_{i\in I}T_i$ is an isometric embedding, where $T_i$ is a locally finite tree for each $i\in I$. We will show that $I$ is infinite. Let $Y=\prod_{i\in I}T_i$. The distance between two elements of $A$ is equal to the number of hyperplanes of $X_\Gamma$ that separate them, and their $f$--images must be separated by the same number of hyperplanes in $Y$. In particular, if $\gamma\subset A$ is a geodesic, then $f(\gamma)$ cannot cross any hyperplane of $Y$ twice.

Both $f(\sgen a)$ and $f(\sgen b)$ are geodesics. For an edge $e\subset A$, let $h_e$ be the hyperplane of $Y$ dual to the edge $f(e)$. Let $T_{i_1}$ be the tree corresponding to the edge $(f(1),f(a))$, that is, the unique tree into which the edge projects injectively. Since $a$ and $b$ commute, the hyperplane $h_{(1,a)}$ crosses $h_{(b^k,b^{k+1})}$ for all $k$. In particular, none of the $h_{(b^k,b^{k+1})}$ come from $T_{i_1}$. Because of this, the projection of $f(b^k)$ to $T_{i_1}$ is equal to the projection of $f(1)$ for all $k$.

Consider the edges $(f(b^k),f(b^kc))$ of $Y$, with $k\in\mathbb N$. Each one has a vertex whose projection to $T_{i_1}$ is equal to the projection of $f(1)$. Since $T_{i_1}$ is locally finite, only finitely many of these edges can come from $T_{i_1}$. Thus there is some $j_1\ne i_1$ such that infinitely many of them come from $T_{j_1}$. Let $K_1=\{k\in\mathbb N\,:\,(f(b^k),f(b^kc))\text{ comes from }T_{j_1}\}$. Note that if $k\in K_1$, then the projection to $T_{i_1}$ of $f(b^kc)$ is equal to that of $f(b^k)$ and hence to that of $f(1)$.

For $k\in K_1$, consider the geodesic rays $f(b^kc\sgen a^+)$ and $f(b^kc\sgen b^+)$. We claim that only finitely many such rays can have edges that come from $T_{i_1}$. Indeed, if there were infinitely many then there would be two such rays $\gamma_1$ and $\gamma_2$ such that the first edge of $\gamma_1$ coming from $T_{i_1}$ comes from the same edge of $T_{i_1}$ as the first edge of $\gamma_2$. But if that happened then the concatenation of $\gamma_1$ and $\gamma_2$, which is a geodesic in $A$ would satisfy that its $f$--image crosses some hyperplane of $Y$ twice, contradicting the assumption that $f$ is an isometric embedding.

We have shown that there is some $k_1\in K_1$ such that no geodesic ray $f(b^{k_1}c\sgen a^+)$ or $f(b^{k_1}c\sgen b^+)$ has an edge coming from $T_{i_1}$. Let $T_{i_2}$ be the tree corresponding to the edge $(f(b^{k_1}c),f(b^{k_1}ca))$. Note that $i_2\ne i_1$. In particular, $|I|\ge2$ 

We now repeat the above argument. Namely, $h_{(b^{k_1}c,b^{k_1}ca)}$ crosses $h_{(b^{k_1}cb^k,b^{k_1}cb^{k+1})}$ for all $k$, and by local finiteness of $T_{i_1}$ and $T_{i_2}$ there is some $j_2\notin\{i_1,i_2\}$ such that infinitely many of the edges $(f(b^{k_1}cb^k),f(b^{k_1}cb^kc))$ come from $T_{j_2}$. Letting $K_2$ be the set of such $k$, we can argue as above, again using local finiteness of $T_{i_1}$ and $T_{i_2}$, that there is some $k_2\in K_2$ such that no geodesic ray $f(b^{k_1}cb^{k_2}c\sgen a^+)$ or $f(b^{k_1}cb^{k_2}c\sgen b^+)$ has an edge coming from either $T_{i_1}$ or $T_{i_2}$. Let $T_{i_3}$ be the tree corresponding to the edge $(f(b^{k_1}cb^{k_2}c),f(b^{k_1}cb^{k_2}ca))$. Noting that $i_3\notin\{i_1,i_2\}$, we must have $|I|\ge3$.

By iterating this process, we see that the set $I$ indexing the product of trees $Y=\prod_{i\in I}T_i$ cannot be finite.
\end{proof}

The proof of the following lemma is purely graph-theoretic. A \emph{parabolic} subgroup of a RAAG $A_\Gamma$ is a subgroup generated by a subset of the standard generators of $A_\Gamma$.

\begin{lemma} \label{lem:tree_flats_parabolic}
Let $A_\Gamma$ be a right-angled Artin group. Either $A_\Gamma$ is a direct sum of free groups, or $\Z^2*\Z$ is a parabolic subgroup of $A_\Gamma$.
\end{lemma}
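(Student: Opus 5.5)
The plan is to translate the statement into a condition on the graph $\Gamma$. The parabolic subgroup generated by a set $S'$ of standard generators is the RAAG on the induced subgraph $\Gamma[S']$. So $\Z^2*\Z$ is a parabolic subgroup exactly when $\Gamma$ contains an induced subgraph on three vertices $\{a,b,c\}$ with exactly one edge, say $\{a,b\}$; call this configuration $P$. On the other side, $A_\Gamma$ is a direct sum of free groups exactly when $\Gamma$ is a join of edgeless graphs. Indeed, a join of graphs gives a direct product of RAAGs, and an edgeless graph on a vertex set $V$ gives the free group on $V$ (possibly of infinite rank, since $\Gamma$ need not be finite). So it suffices to prove: if $\Gamma$ has no induced copy of $P$, then $\Gamma$ is a join of edgeless graphs.

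Assume $\Gamma$ has no induced $P$. Define a relation on vertices by $u\sim v$ if $u=v$ or $u$ and $v$ are not adjacent. This is the equivalence relation whose classes are the connected components of the complement graph $\overline\Gamma$ when $\overline\Gamma$ is a disjoint union of cliques. Reflexivity and symmetry are clear. For transitivity, take distinct $u,v,w$ with $u\not\sim w$ (so $u,w$ adjacent) while $u\sim v$ and $v\sim w$ (so $v$ is adjacent to neither). Then $\{u,w,v\}$ spans exactly one edge, namely $\{u,w\}$, which is an induced $P$, a contradiction. Equivalently, $\overline\Gamma$ contains no induced path on three vertices, so each of its components is a clique.

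Let $V_i$ be the equivalence classes. Each $V_i$ is edgeless in $\Gamma$ by definition. Any two vertices in different classes are adjacent, so $\Gamma$ is the join of the edgeless graphs $\Gamma[V_i]$. Hence $A_\Gamma\cong\bigoplus_i F(V_i)$, where each $F(V_i)$ is free; an edgeless class with one vertex gives a $\Z$ factor, which is free of rank one. If $\Gamma$ is infinite there may be infinitely many classes, and then "direct sum" means the restricted sum, which is exactly the RAAG on the join. The only real subtlety is this translation between graphs and groups, together with the transitivity check; I do not expect any genuine obstacle.
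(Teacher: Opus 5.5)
Your proposal is correct and takes the same approach as the paper. The paper also restates the absence of a parabolic $\Z^2*\Z$ as the absence of an induced edge plus isolated vertex, and uses the same relation ($u\sim v$ iff $u=v$ or $u,v$ are non-adjacent), proving transitivity via that forbidden subgraph and reading off $\Gamma$ as a join of the edgeless equivalence classes; your transitivity step, which correctly names $\{u,w\}$ as the unique edge, is stated a little more carefully than the paper's.
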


\begin{proof}
Assume $A_\Gamma$ does not contain a parabolic $\Z^2*\Z$ subgroup. 
Equivalently, $\Gamma$ does not contain the disjoint union of an edge and a vertex as an induced subgraph. We will refer to this as the \emph{graph assumption}. We need to show that $\Gamma$ decomposes as a join of edgeless graphs.

Consider the following relation $\sim$ on the vertices of $\Gamma$:
$$
u \sim v \iff u=v \text{ or } u \text{ is not adjacent to } v.
$$
The relation $\sim$ is reflexive and symmetric by definition.
It is also transitive. Indeed, assume that $u \sim v$ and $v \sim w$. If $u \not\sim w$, then $\{u,v\}$ forms an edge of $\Gamma$ and the induced subgraph on $\{u,v,w\}$ is a disjoint union of that edge and $\{w\}$, in contradiction to the graph assumption. Thus $\sim$ is an equivalence relation.

Let $\{C_i\,:\,i\in I\}$ be the equivalence classes of $\sim$. By definition, each $C_i$ has no edges. Moreover, if $i\neq j\in I$, then every vertex of $C_i$ is adjacent to every vertex of $C_j$, otherwise two non-adjacent vertices in distinct classes would be equivalent. Therefore $\Gamma=\bigast_{i\in I}C_i$,
and hence $A_\Gamma=\bigoplus_{i\in I}A_{C_i}$ is a direct sum of free groups.
\end{proof}

\begin{corollary} \label{cor:ie_prod_free}
A right-angled Artin group can be isometrically embedded in a finite product of locally finite trees if and only if it is a finite product of finite-rank free groups.
\end{corollary}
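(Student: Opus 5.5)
The plan is to prove the two directions separately, using \cref{prop:no_ie_in_finite_valence} and \cref{lem:tree_flats_parabolic} for the hard one.

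For the ``if'' direction, suppose $A_\Gamma=F_{r_1}\times\cdots\times F_{r_m}$ with each $r_i$ finite. The standard word metric on $A_\Gamma$ is the $\ell^1$ sum of the standard word metrics on the factors, since $\Gamma$ is the join of the edgeless graphs on the generators of each factor. Each $F_{r_i}$, with its standard generators, is isometric to the vertex set of its Cayley graph, which is the $2r_i$--regular tree. Hence $A_\Gamma$ embeds isometrically in the product of these $m$ locally finite trees with the $\ell^1$--metric. (If some $r_i=0$ the factor is trivial and can be dropped.)

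For the ``only if'' direction, suppose $A_\Gamma$ embeds isometrically in a finite product $\prod_{i\in I}T_i$ of locally finite trees. By \cref{lem:tree_flats_parabolic}, either $A_\Gamma$ is a direct sum of free groups, or $\Z^2*\Z$ is a parabolic subgroup. In the second case, let $\Gamma_0\subseteq\Gamma$ be the induced subgraph consisting of an edge and a disjoint vertex. The inclusion $A_{\Gamma_0}\to A_\Gamma$ is an isometric embedding for the standard word metrics. This holds because the inclusion is induced by a retraction-compatible map, or equivalently because the distance between vertices of $X_\Gamma$ counts separating hyperplanes and standard subcomplexes are convex. Composing would give an isometric embedding of $\Z^2*\Z$ into a finite product of locally finite trees, contradicting \cref{prop:no_ie_in_finite_valence}. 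So $A_\Gamma=\bigoplus_{j\in J}A_{C_j}$ with each $C_j$ edgeless.

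It remains to show that $J$ is finite and each $C_j$ is finite. This is the step I expect to need the most care. Local finiteness of the trees and finiteness of $I$ make $\prod T_i$ locally finite with the $\ell^1$--metric, so balls of radius $1$ there are finite. An isometric embedding is injective, so it maps the sphere of radius $1$ about the identity in $A_\Gamma$ injectively into a finite set. That sphere is $S\cup S^{-1}$, so $\Gamma$ has finitely many vertices. Hence $J$ is finite and every $C_j$ is finite, and $A_\Gamma$ is a finite product of finite-rank free groups. The only step I would double-check is the isometric nature of the parabolic inclusion, which follows from the standard fact that the projection $A_\Gamma\to A_{\Gamma_0}$ killing the other generators is $1$--Lipschitz and restricts to the identity on $A_{\Gamma_0}$.
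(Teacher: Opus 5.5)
Your proposal is correct and follows the paper's proof. The paper uses the same three steps: $\Gamma$ is finite by local finiteness, \cref{prop:no_ie_in_finite_valence} rules out a parabolic $\Z^2*\Z$, and \cref{lem:tree_flats_parabolic} finishes. You also justify two things the paper leaves implicit: that parabolic inclusions are isometric (via the $1$--Lipschitz retraction), and the converse direction. Note that your finiteness-of-balls step, like the paper's, regards the trees as graphs (vertex sets) rather than as metric realisations.
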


\begin{proof}
The reverse direction of the equivalence is obvious, so let $A_\Gamma$ be a right-angled Artin group that isometrically embeds in a finite product of locally finite trees. The graph $\Gamma$ must be finite, for otherwise $A_\Gamma$ would be locally infinite. By \cref{prop:no_ie_in_finite_valence}, $\Z^2*\Z$ is not a parabolic subgroup of $A_\Gamma$. By \cref{lem:tree_flats_parabolic}, we deduce that $A_\Gamma$ is a finite direct product of finite-rank free groups.
\end{proof}

\section{Stable embeddings into products} \label{antirull}

The goal of this section is to provide a counterpoint to Rull's theorem \cite{rull:embedding}. As noted in the introduction, Rull's quasiisometric embedding of an $n$--colourable RAAG into $(F_2)^n$ is almost never stable. We shall prove the necessity of this by considering stable quasiisometric embeddings of RAAGs into products of RAAGs. In particular, we show in \cref{cor:no_stable_Rull} that if $A_\Gamma$ can be stably quasiisometrically embedded in a finite product of finite-rank free groups, then $A_\Gamma$ is itself a finite product of free groups.

The following appears as \cite[Def.~6.1]{baderbensaidpetyt:quasiisometric:rigidity}.

\begin{definition}[Stable] \label{def:stable_QIE}
A quasiisometric embedding $\varphi:A_\Gamma\to A_\Lambda$ is said to be \emph{stable} if there is a constant $D$ such that $\varphi$ sends every standard geodesic of $A_\Gamma$ within Hausdorff distance at most $D$ of some standard geodesic of $A_\Lambda$.
\end{definition}

We shall use the notion of \emph{extension graphs}, introduced in \cite{kimkoberda:embedability}. Extension graphs are sometimes thought of as playing the role of the curve graph of a mapping class group, and they are used heavily in both \cite{baderbensaidpetyt:quasiisometric:flexibility} and \cite{baderbensaidpetyt:quasiisometric:rigidity}.

\begin{definition}[Extension graph] \label{def:extension_graph}
For a graph $\Gamma$, the \emph{extension graph} $\Gamma^\ext$ is the following graph. It has a vertex for each parallelism class of standard geodesics in $X_\Gamma$. Two parallelism classes are joined by an edge if they admit representatives that span a 2--flat of $X_\Gamma$. 

For $v\in \Gamma$ with corresponding standard generator $s$, we denote the vertex of $\Gamma^\ext$ corresponding to $g\sgen s$ by $g\cdot v$. 
\end{definition}

The following connects stable quasiisometric embeddings with extension graphs; see \cite[Prop.~6.2]{baderbensaidpetyt:quasiisometric:rigidity}.

\begin{proposition} \label{prop:stable_implies_induced}
Let $\Gamma$ and $\Lambda$ be graphs and let $\varphi:A_\Gamma\to A_\Lambda$ be a stable quasiisometric embedding. The map $\varphi$ induces a combinatorial embedding $\Gamma^\ext\to \Lambda^\ext$. 
\end{proposition}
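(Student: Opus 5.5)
The plan is to build the map $\Gamma^\ext\to\Lambda^\ext$ directly from stability, then check that it is well defined, injective, and preserves adjacency. For a standard geodesic $g\sgen s$ of $A_\Gamma$, stability provides a standard geodesic $\ell$ of $A_\Lambda$ with $d_{\mathrm{Haus}}(\varphi(g\sgen s),\ell)\le D$. Any two choices of $\ell$ are at Hausdorff distance at most $2D$, so they are parallel. Hence the parallelism class of $\ell$ is determined by $g\sgen s$. Parallel standard geodesics of $A_\Gamma$ lie at finite Hausdorff distance, and $\varphi$ is coarsely Lipschitz, so their images are at finite Hausdorff distance; the associated geodesics of $A_\Lambda$ are therefore parallel. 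This gives a well-defined map $\varphi_*:\Gamma^\ext\to\Lambda^\ext$ on vertices.

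\emph{Injectivity.} Suppose $\varphi_*(x)=\varphi_*(y)$, with $x,y$ represented by geodesics $\gamma,\gamma'$. Then $\varphi(\gamma)$ and $\varphi(\gamma')$ lie within finite Hausdorff distance of one another. Because $\varphi$ is a quasiisometric embedding, the Hausdorff distance between $\gamma$ and $\gamma'$ is also finite, so $\gamma$ and $\gamma'$ are parallel and $x=y$.

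\emph{Adjacency.} Let $x,y$ be adjacent. Choose representatives $\gamma_1=g\sgen s$ and $\gamma_2=g\sgen t$ spanning a standard $2$--flat $F=g\sgen{s,t}$. The image $\varphi(F)$ is a quasiflat of dimension two. Every line $gs^m\sgen t$ is parallel to $\gamma_2$, so each is sent near a geodesic parallel to the one $\ell_2$ assigned to $\gamma_2$. In a RAAG, the set of standard geodesics parallel to $\ell_2$ is the parallel set $P(\ell_2)=h\,\sgen{\mathrm{st}(v)}$ minus some details, where $v$ is the label of $\ell_2$. This set splits as $\ell_2\times A_{\mathrm{lk}(v)}$. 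Now $\varphi(\gamma_1)$ stays within bounded distance of $P(\ell_2)$, since every point of $\gamma_1$ lies on a translate $gs^m\sgen t$. Projecting to the $A_{\mathrm{lk}(v)}$ factor, $\varphi(\gamma_1)$ moves unboundedly, and it is uniformly close to a standard geodesic $\ell_1$. I expect the geodesic near $\varphi(\gamma_1)$ can be chosen inside $P(\ell_2)$, with label in $\mathrm{lk}(v)$. Its label $u$ cannot equal $v$, because then $\ell_1$ would be parallel to $\ell_2$, contradicting injectivity. Therefore $\ell_1$ and a translate of $\ell_2$ span a standard $2$--flat, so $\varphi_*(x)$ and $\varphi_*(y)$ are adjacent.

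The main obstacle is this last step: a standard geodesic that coarsely lies in the parallel set $P(\ell_2)$ must, after moving it a bounded distance, lie in $P(\ell_2)$ and have label in $\mathrm{st}(v)$. I would handle this with convexity of $P(\ell_2)$ as a subcomplex and the closest-point projection in $X_\Lambda$. The projection of a standard geodesic with label $u\notin\mathrm{st}(v)$ onto $P(\ell_2)$ is bounded, because hyperplanes dual to $u$ do not cross $P(\ell_2)$. This contradicts unbounded proximity. Since the statement is quoted from \cite[Prop.~6.2]{baderbensaidpetyt:quasiisometric:rigidity}, it would also be acceptable to simply cite it.
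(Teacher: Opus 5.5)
The paper does not prove this statement; it imports it from \cite[Prop.~6.2]{baderbensaidpetyt:quasiisometric:rigidity}. Your argument is a correct, self-contained proof, so the useful comparison is with what the paper does in an analogous situation. In the proof of \cref{thm:anti-rull2}, images of standard $2$--flats under a stable map are controlled via \cite[Prop.~4.3]{baderbensaidpetyt:from}: they lie near singular, hence standard, $2$--flats. That would give adjacency at once, but it rests on the quasiflat machinery of that paper. You use only two facts: $P=hA_{\mathrm{st}(v)}$ (where $\ell_2=h\sgen v$) is a convex subcomplex whose hyperplanes all have labels in $\mathrm{st}(v)$, and it has a gate map. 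This is purely combinatorial and works for arbitrary, even infinite, $\Lambda$, as the statement allows.

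There are two things to tighten. First, the claim that $\varphi(\gamma_1)$ has unbounded projection to the $A_{\mathrm{lk}(v)}$ factor is never justified, but it is also never used, so drop it. Second, and more substantively, showing $u\in\mathrm{st}(v)$ is not enough. To exclude $u=v$ via injectivity, and to exhibit the $2$--flat, you need $\ell_1$ to be parallel to some $ha\sgen u$ with $a\in A_{\mathrm{st}(v)}$.

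The same gate map $\pi$ onto $P$ supplies this. Since $\ell_1$ lies in the $2D$--neighbourhood of $P$, at most $4D$ hyperplanes crossing $\ell_1$ miss $P$: each such hyperplane separates $P$ from an entire subray of $\ell_1$. Consecutive vertices of $\ell_1$ project to equal or $u$--adjacent vertices, and $\pi(\ell_1)$ crosses no hyperplane twice because $\ell_1$ does not. Hence $\pi(\ell_1)$ is a standard geodesic $ha\sgen u\subset P$ within Hausdorff distance $2D$ of $\ell_1$. This forces $u\in\mathrm{st}(v)$, and injectivity gives $u\ne v$. Finally, $ha\sgen u$ and $ha\sgen v$ span the standard flat $ha\sgen{u,v}$, and $ha\sgen v$ is parallel to $\ell_2$.
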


We aim to control the images of standard 2--flats under stable quasiisometric embeddings, and we can do so for standard flats that avoid the following degeneracy condition.

\begin{definition}[Spinal]
We call an edge $e$ of a graph $\Gamma$ \emph{spinal} if every vertex of $\Gamma$ is adjacent to $e$. We refer to a 2--flat $F\subseteq A_\Gamma$ as being \emph{spinal} if it is a standard flat whose corresponding generators are the vertices of a spinal edge of $\Gamma$. 
\end{definition}

If $\gamma=g\sgen s\subset X_\Gamma$ is a standard geodesic, then we write $\type(\gamma)=s$ and refer to $s$ as the \emph{type} of $\gamma$.

\begin{theorem} \label{thm:anti-rull2}
Let $\Gamma$ be a graph, and let $\Lambda$ be a finite graph that decomposes as a join $\Lambda=\bigast_{i=1}^r\Lambda_i$. Suppose there exists a stable quasiisometric embedding $\varphi:A_\Gamma\to A_\Lambda$. Each non-spinal standard 2--flat of $A_\Gamma$ is mapped within a uniformly finite Hausdorff distance of a 2--flat coming from a single $\Lambda_i$.
\end{theorem}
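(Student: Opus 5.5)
We work in the extension graphs, using \cref{prop:stable_implies_induced}. Let $F=g\sgen{s,t}$ be a non-spinal standard 2--flat, so $\{s,t\}$ is a non-spinal edge of $\Gamma$. By stability, $\varphi(g\sgen s)$ and $\varphi(g\sgen t)$ lie within distance $D$ of standard geodesics $\alpha,\beta\subset A_\Lambda$. The induced embedding $\Gamma^\ext\to\Lambda^\ext$ sends the adjacent vertices $g\cdot s,g\cdot t$ to adjacent vertices $[\alpha],[\beta]$, so $\alpha,\beta$ have parallel representatives spanning a standard 2--flat $F'$. Since $\varphi$ is a quasiisometric embedding and $\varphi(F)$ is coarsely the ``product'' of the two images, $\varphi(F)$ lies at uniformly bounded Hausdorff distance from $F'$; the bound depends only on $D$ and the quasiisometry constants. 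This step is routine coarse geometry of flats in $X_\Lambda$: for instance, the projection of $\varphi(F)$ to the parallel set of $\alpha$ together with the product structure of that parallel set.

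It remains to show that $\type(\alpha)$ and $\type(\beta)$ lie in the same $\Lambda_i$. Suppose not, say $\type(\alpha)\in\Lambda_i$ and $\type(\beta)\in\Lambda_j$ with $i\ne j$. The key feature of the join is that every vertex of $\Lambda^\ext$ of type in $\Lambda_i$ is adjacent in $\Lambda^\ext$ to every vertex of type in $\Lambda_j$. This holds because $\Lambda^\ext=\bigast_k\Lambda_k^\ext$, as $A_\Lambda=\prod A_{\Lambda_k}$ and standard geodesics of different factors always span flats. Since $\{s,t\}$ is non-spinal, there is a vertex $u\in\Gamma$ adjacent to neither $s$ nor $t$. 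Consider the vertex $g\cdot u$, or a suitable translate $gh\cdot u$ with $h\in\sgen{s}$ or $\sgen t$, of $\Gamma^\ext$. Its image has type in some $\Lambda_k$.
\begin{itemize}
\item If $k\ne i$, then the image is adjacent to $[\alpha]=\varphi(g\cdot s)$.
\item If $k=i$, then $k\neq j$, so the image is adjacent to $[\beta]=\varphi(g\cdot t)$.
\end{itemize}
If the embedding of extension graphs is induced, in the sense that non-edges go to non-edges (the meaning of ``combinatorial embedding'' in this paper), this contradicts the fact that $g\cdot u$ is adjacent to neither $g\cdot s$ nor $g\cdot t$ in $\Gamma^\ext$. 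Indeed, non-adjacency in $\Gamma$ gives non-adjacency of $g\cdot u$ and $g\cdot s$ in $\Gamma^\ext$. Hence $\type(\alpha),\type(\beta)$ lie in a common $\Lambda_i$, and $F'$ comes from $\Lambda_i$.

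The main obstacle is whether the extension-graph map reflects non-adjacency. If \cref{prop:stable_implies_induced} only provides an injective graph map, I would replace the argument with a geometric one. Non-adjacent $g\cdot u$ and $g\cdot s$ means $g\sgen u$ and $g\sgen s$ diverge, with no common flat. Their images would then be two standard geodesics through nearby points that are adjacent in $\Lambda^\ext$, hence lie in a common flat of $X_\Lambda$. The coarse inverse of $\varphi$ on that flat, or a divergence or quasiflat comparison, would force $\varphi^{-1}$ of this flat to be coarsely a 2--flat containing both $g\sgen u$ and $g\sgen s$. That contradicts the fact that, in $X_\Gamma$, the union of two non-commuting standard geodesics through one point is not coarsely contained in any quasiflat. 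I expect verifying this non-adjacency preservation, with uniform constants, to be the hardest step.
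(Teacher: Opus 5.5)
Your first step (that $\varphi(F)$ is uniformly close to a standard 2--flat $F'=\sgen{t_1,t_2}$) is fine, and so is your observation that $\Lambda^{\mathrm{ext}}$ is the join of the $\Lambda_i^{\mathrm{ext}}$. The decisive step is not. You need the induced map $f:\Gamma^{\mathrm{ext}}\to\Lambda^{\mathrm{ext}}$ to send non-adjacent vertices to non-adjacent vertices, and \cref{prop:stable_implies_induced} does not give this. As the term is used in this paper, a combinatorial embedding is only injective and sends edges to edges; compare the proof of \cref{thm:beyond_FB_induced_embedding_singboundary}, where only injectivity and cliques-to-orthants are checked. In fact the stronger property is false, already for two lines through one point. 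Let $\Gamma=\{u,s\}$ have no edges, and let $\Lambda$ be an edge $\{a,b\}$ plus an isolated vertex $c$. Let $\psi_1,\psi_2:A_\Gamma\to A_\Lambda$ be the homomorphisms with $\psi_1(u)=a$, $\psi_1(s)=c$, $\psi_2(u)=c$, $\psi_2(s)=b$. Put $\varphi(w)=\psi_1(w)$ if the reduced word $w$ is trivial or begins with $u^{\pm1}$, and $\varphi(w)=\psi_2(w)$ otherwise. Each $\psi_i$ is an isometry onto a parabolic free subgroup. For $w$ beginning with $u^{\pm1}$ and $w'$ beginning with $s^{\pm1}$, the word $\psi_1(w)^{-1}\psi_2(w')$ admits no cancellation, so $\varphi$ is an isometric embedding. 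It maps every standard geodesic onto a standard geodesic (for instance $g\sgen u\mapsto\psi_1(g)\sgen a$ or $\psi_2(g)\sgen c$), so it is stable. Yet $\varphi(\sgen u)=\sgen a$ and $\varphi(\sgen s)=\sgen b$ span the flat $\sgen{a,b}$, so $f$ sends the non-adjacent vertices $1\cdot u,1\cdot s$ to adjacent ones. The same example defeats your fallback. The union of the two axes of an $\ell^1$ 2--flat is isometric to a four-ended tree, and $\sgen{a,b}$ is not coarsely contained in $\varphi(A_\Gamma)$. So there is no quasiflat to pull back and no obstruction visible at a single point.

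The missing idea is that the contradiction must come from the whole family of lines $h\sgen{u}$ with $h\in F$ (your $u$ is the paper's $v$), using only injectivity of $f$ on vertices. These lines are pairwise non-parallel, so their images represent distinct vertices of $\Lambda^{\mathrm{ext}}$. Each line meets $F$, so its image is uniformly close to a standard geodesic $xw_x\sgen{t_x}$ with $x\in F'$ and $|w_x|\le K$, and such points $x$ are coarsely dense in $F'$. Since $\Lambda$ is finite there are finitely many possible pairs $(w_x,t_x)$. A double pigeonhole over widely spaced squares in $F'$ then gives four points $xt_1^{p_i}t_2^{q_j}$, $i,j\in\{1,2\}$, that are hit with the same $w$ and the same type $t\in\Lambda_k$. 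Relabel so that $k\ne1$, which is why a rectangle is needed: you do not know in advance which factor $t$ comes from. Write $w=w_1\cdots w_r$ with $w_i\in A_{\Lambda_i}$. The join relations then make both $xt_1^{p_1}t_2^{q_1}w\sgen t$ and $xt_1^{p_2}t_2^{q_1}w\sgen t$ parallel to $xt_2^{q_1}w_2\cdots w_r\sgen t$. Thus two distinct lines $h\sgen u$ have parallel images, contradicting injectivity.
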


\begin{proof}
Suppose that $\varphi:A_\Gamma\to A_\Lambda$ is a stable quasiisometric embedding, so that $\varphi$ induces a map $f:\Gamma^\ext\to\Lambda^\ext$, by \cref{prop:stable_implies_induced}. Since $\varphi$ is stable, by \cite[Prop.~4.3]{baderbensaidpetyt:from} $\varphi$ maps each standard 2--flat of $A_\Gamma$ within a uniformly bounded Hausdorff distance of some singular 2--flat of $A_\Lambda$. By \cite[Lem.~4.2]{baderbensaidpetyt:from} and \cite[Lem.~2.14 (3)]{baderbensaidpetyt:quasiisometric:rigidity}, that 2--flat is standard. 

Suppose that there is some non-spinal 2--flat $F\subseteq A_\Gamma$ whose corresponding image 2--flat $F'$ does not arise from a single $\Lambda_i$. Our goal is to derive a contradiction by finding a pair of non-parallel geodesics in $A_\Gamma$ with parallel $\varphi$--images.

By post- and pre-composing $\varphi$ with left translations, we can assume that $1\in F'$ and $F$ is a standard flat corresponding to a non-spinal edge $e\subseteq\Gamma\subseteq\Gamma^\ext$. After relabelling the $\Lambda_i$, we can assume that the standard generators of $A_\Lambda$ that generate $F'$ are $t_1\in A_{\Lambda_1}$ and $t_2\in A_{\Lambda_2}$. 

Let $v\in\Gamma$ be a vertex that is not a neighbour of $e$, with corresponding generator $s\in A_\Gamma$. If $g_1,g_2\in F$ are distinct, then the standard geodesics $g_1\sgen s$ and $g_2\sgen s$ are disjoint and diverge from each other and $F$ at constant speed; in particular, $f(g_1\cdot v)\neq f(g_2\cdot v)$. If $g\in F$, then since $g\sgen s$ meets $F$, we have that $\varphi(g\sgen s)$ comes $D$--close to $F'$. Since $\varphi$ is stable, we can fix a standard geodesic $\gamma_g\subseteq A_\Lambda$ that is within uniform Hausdorff distance of $\varphi(g\sgen s)$, comes within a uniform distance of $F'$, and represents the vertex $f(g\cdot v)$ of $\Lambda^\ext$. 

Say that a point $x\in F'$ is \emph{hit} if there is an element $g_x\in F$ such that $x$ is the $\ell^1$--closest point in $F'$ to $\gamma_{g_x}$. The element $g_x$ need not be unique, so we fix a choice of $g_x$ for each $x\in F'$ that is hit. Note, however, that each $g\in F$ is responsible for at most one $x\in F'$ being hit. If $x\in F'$ is hit, then let $w_x$ be the minimal element of $A_\Lambda$ such that $xw_x\in\gamma_{g_x}$. By construction, there is some $K$ such that $|w_x|\le K$ for every $x$ that is hit. 

Since $\varphi$ is a quasiisometric embedding that maps $F$ within finite Hausdorff distance of $F'$, there is some $R$ such that every ball in $F'$ of radius $R$ contains a point that is hit. For $m,n\in\Z$, let $Q_{m,n}$ be the rectilinear square in $F'=\sgen{t_1,t_2}$ that has side-length $2R$ and is centred on $t_1^{10Rm}t_2^{10Rn}$. Note that the $Q_{m,n}$ are pairwise disjoint, and each contains at least one point that is hit.

Each $Q_{m,n}$ contains $4R^2$ points. Since $\Lambda$ is finite, there are finitely many words in $A_\Lambda$ of length at most $K$ and finitely many types of standard geodesics. Thus there is a number $M$ such that any collection of $M$ of the squares $Q_{m,n}$ must contain a pair with ``the same relative point being hit in the same way''. More precisely, if $(m_1,n_1),\dots,(m_M,n_M)$ are distinct, then there exists: an $x\in Q_{0,0}$; an element $w\in A_\Lambda$ with $|w|\le K$; some standard generator $t\in A_\Lambda$; and a pair $(m_i,n_i)$, $(m_j,n_j)$ with the following properties. Both $x_i=xt_1^{10Rm_i}t_2^{10Rn_i}\in Q_{m_i,n_i}$ and $x_j=xt_1^{10Rm_j}t_2^{10Rn_j}\in Q_{m_j,n_j}$ are hit; we have $w_{x_i}=w_{x_j}=w$; and we have $\type(\gamma_{g_{x_i}})=\type(\gamma_{g_{x_j}})=t$. We encode the data of this pair by the tuple $\tau=(x,w,t,i,j)$.

For fixed $n$, consider the set $\cQ_n$ of squares $\{Q_{1,n},\dots,Q_{M,n}\}$. By the previous paragraph, there is a pair among them as above, encoded by a tuple $\tau_n$. Since there are only finitely many possible tuples, there is some $N$ such that the set $\{\cQ_1,\dots,\cQ_N\}$ contains two elements $\cQ_i$ and $\cQ_j$ with the same associated tuple $\tau_i=\tau_j$.

From this, we have ``a rectangle of points being hit in the same way''. More precisely, we have shown that there exists: a point $x\in Q_{0,0}$; an element $w\in A_\Lambda$ with $|w|\le K$; a standard generator $t\in A_\Lambda$; and numbers $m_1,m_2\in\{1,\dots,M\}$ and $n_1,n_2\in\{1,\dots,N\}$ such that each $x_{ij}=xt_1^{10Rm_i}t_2^{10Rn_j}$ with $i,j\in\{1,2\}$ is hit, and also $w_{x_{ij}}=w$ and $\type(\gamma_{g_{x_{i,j}}})=t$. 

Since $\Lambda=\bigast_{k=1}^r\Lambda_k$, because we found a rectangle of points hit in the same way, we can relabel so that the standard generator $t$ arises from some $\Lambda_k$ with $k\ne 1$. This decomposition moreover allows us to write $w=w_1w_2\dots w_r$, where $w_i\in A_{\Lambda_i}$. In particular, each $w_k$ commutes with each $w_{k'}$ with $k'\ne k$, and each standard generator $t_k$ coming from $\Lambda_k$ commutes with all $w_{k'}$ with $k'\ne k$. 

Since each $g\in F$ is responsible for at most one $x\in F'$ being hit, $g_{x_{11}}$ and $g_{x_{21}}$ are distinct. By construction, we have $\gamma_{g_{x_{ij}}}=x_{ij}w\sgen t$. From the above commutation relations, we see that each of $\gamma_{g_{x_{11}}}$ and $\gamma_{g_{x_{21}}}$ is parallel to $xt_2^{10Rn_1}w_2w_3\dots w_r\sgen t$. Thus $f(g_{x_{11}}\cdot v)=f(g_{x_{21}}\cdot v)$. But this is impossible, because we saw earlier that if $g_1,g_2\in F$ are distinct then $f(g_1\cdot v)\ne f(g_2\cdot v)$.
\end{proof}

\begin{remark} 
The proof of \cref{thm:anti-rull2} only uses a finite region of the 2--flat $F'$ of size depending on the various parameters, so one could get away with assuming something less strong but more technical than stability. 
\end{remark}

\begin{corollary} \label{cor:stable_irreducible}
Let $\Gamma$ be a connected graph with $\diam\Gamma>3$, and let $\Lambda$ be a finite graph that decomposes as a join $\Lambda=\bigast_{i=1}^r\Lambda_i$ of irreducible graphs. If $\varphi:A_\Gamma\to A_\Lambda$ is a stable quasiisometric embedding, then $\varphi$ induces a combinatorial embedding $\Gamma^\ext\to\Lambda_i^\ext$ for some~$i$.
\end{corollary}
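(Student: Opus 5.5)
Since $\varphi$ is stable, \cref{prop:stable_implies_induced} gives a combinatorial embedding $f:\Gamma^\ext\to\Lambda^\ext$. The plan is to show that the image of $f$ lies in the copy of $\Lambda_i^\ext$ for a single $i$. Because $\Lambda$ is a join, $\Lambda^\ext$ is the join $\bigast_i\Lambda_i^\ext$, where the vertices coming from $\Lambda_i$ are the parallelism classes of standard geodesics whose type lies in $\Lambda_i$; this is a general fact about extension graphs of joins. So it suffices to show that all vertices of $\Gamma^\ext$ are sent to vertices whose type lies in one fixed $\Lambda_i$. The map to $\Lambda_i^\ext$ is then the corestriction of $f$, which is still a combinatorial embedding since $\Lambda_i^\ext$ is an induced subgraph of $\Lambda^\ext$.

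\textbf{Step 1: no edge of $\Gamma^\ext$ is spinal in the relevant sense.} Since $\diam\Gamma>3$, no edge $e=\{a,b\}$ of $\Gamma$ is spinal. Indeed, if every vertex were adjacent to $a$ or $b$, then any two vertices would be joined by a path through $e$ of length at most $3$. Every edge of $\Gamma^\ext$ is represented by a translate $gF$ of a standard $2$--flat $F$ associated to an edge of $\Gamma$. Hence every standard $2$--flat of $A_\Gamma$ is non-spinal, and \cref{thm:anti-rull2} applies to all of them. For an edge $\{x,y\}$ of $\Gamma^\ext$ spanning a flat $gF$, the image flat comes from a single $\Lambda_i$. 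The two images $f(x)$ and $f(y)$ are the parallelism classes of standard geodesics near $\varphi$ of the two coordinate geodesics of $gF$, and those lie in that image flat. Therefore $f(x)$ and $f(y)$ have types in the same $\Lambda_i$.

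\textbf{Step 2: propagate along paths.} Define $c:V(\Gamma^\ext)\to\{1,\dots,r\}$ by letting $c(x)$ be the index of the factor containing the type of $f(x)$. By Step 1, $c$ is constant on the endpoints of each edge. Since $\Gamma$ is connected, $\Gamma^\ext$ is connected: translates $g\Gamma$ with $g$ a standard generator share the vertices fixed by that generator, and $\Gamma^\ext$ is the union of translates of $\Gamma$. Thus $c$ is constant, say equal to $i$, and $f$ maps into $\Lambda_i^\ext$.

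The main obstacle is the bookkeeping in Step 1. One must check that the geodesics representing $f(x)$ and $f(y)$ genuinely lie within bounded distance of the image flat provided by \cref{thm:anti-rull2}, and that their types are then the two generators of that flat. I would use the fact from the proof of \cref{thm:anti-rull2} that the image of a standard $2$--flat is uniformly Hausdorff close to a standard $2$--flat $F'$. A standard geodesic Hausdorff close to a geodesic in $F'$ must be parallel to one of the two coordinate directions of $F'$: a standard geodesic at finite Hausdorff distance from a line in a standard flat is parallel to a standard line of that flat, and $\varphi$ of a standard geodesic in $gF$ is a coarse line in $F'$. This identifies the types of $f(x)$ and $f(y)$ with the generators of $F'$, which lie in $\Lambda_i$.
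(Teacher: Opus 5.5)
Your proposal is correct and follows the paper's proof: you obtain $f$ from \cref{prop:stable_implies_induced}, observe that $\diam\Gamma>3$ rules out spinal edges, and then apply \cref{thm:anti-rull2} to every standard 2--flat. Your Step~2 (the factor index is constant across each edge, and $\Gamma^\ext$ is connected) spells out how one passes to a single $i$, a step the paper compresses into the word ``Hence''.
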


\begin{proof}
Since $\varphi$ is stable, it induces an embedding $\Gamma^\ext\to\Lambda^\ext$, by \cref{prop:stable_implies_induced}. If $\diam\Gamma>3$, then no edge of $\Gamma$ is spinal. Hence there is some $i$ such that every standard 2--flat of $A_\Gamma$ is mapped within finite Hausdorff distance of a 2--flat coming from $\Lambda_i$, by \cref{thm:anti-rull2}. Thus the image of $\Gamma^\ext$ lies in $\Lambda_i^\ext$.
\end{proof}

\begin{remark} \label{rem:not_into_factor}
Note that one cannot conclude in \cref{cor:stable_irreducible} that $\varphi(A_\Gamma)$ lies in a uniform neighbourhood of a single irreducible factor. For example, given a non-complete graph $\Gamma$, let $\Lambda$ be the join of $\Gamma$ with another vertex, so that $A_\Lambda=A_\Gamma\times\Z$. Let $\varphi:A_\Gamma\to A_\Lambda$ be the map given by $\varphi(g)=(g,\syl g)$. It is a quasiisometric embedding because the map $g\mapsto\syl g$ is 1--Lipschitz, and it is stable because $\syl{\cdot}$ takes only two values on each standard geodesic. But $\varphi(A_\Gamma)$ is not contained in any finite neighbourhood of any coset of $A_\Gamma<A_\Lambda$.
\end{remark}

The following corollary echoes \cref{cor:ie_prod_free}. Recall that $\omega_\Gamma$ denotes the clique number of a graph $\Gamma$. 

\begin{corollary} \label{cor:no_stable_Rull}
If $A_\Gamma$ admits a stable quasiisometric embedding in a finite product of finite-rank free groups, then $A_\Gamma$ is itself a finite product of free groups.
\end{corollary}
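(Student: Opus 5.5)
We argue by contradiction, reducing to a parabolic $\Z^2*\Z$ as in \cref{cor:ie_prod_free}. Suppose $\varphi:A_\Gamma\to A_\Lambda$ is a stable quasiisometric embedding, where $A_\Lambda=F_{n_1}\times\dots\times F_{n_r}$ is a finite product of finite-rank free groups. Then $\Lambda=\bigast_{i=1}^r\Lambda_i$ with each $\Lambda_i$ an edgeless graph on $n_i$ vertices. We may assume each $n_i\ge2$: a factor with $n_i=1$ can be absorbed into another factor by a stable quasiisometric embedding, or simply kept, since the argument below never uses $n_i\ge 2$. The key observation is that every standard 2--flat of $A_\Lambda$ is spanned by generators from two \emph{distinct} factors $\Lambda_i$ and $\Lambda_j$, because each $\Lambda_i$ has no edges. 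So no 2--flat of $A_\Lambda$ ``comes from a single $\Lambda_i$'', and \cref{thm:anti-rull2} forces every standard 2--flat of $A_\Gamma$ to be spinal.

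Next, by \cref{lem:tree_flats_parabolic}, if $A_\Gamma$ is not a direct sum of free groups, then $\Gamma$ contains an induced subgraph consisting of an edge $e=\{a,b\}$ together with a vertex $c$ adjacent to neither $a$ nor $b$. In that case $e$ is not spinal, since $c$ is not adjacent to $e$. The standard 2--flat $F_e$ is therefore a non-spinal standard 2--flat. By \cref{thm:anti-rull2} it is mapped within finite Hausdorff distance of a 2--flat coming from a single $\Lambda_i$, and no such flat exists. This contradiction shows $A_\Gamma$ is a direct sum of free groups.

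It remains to upgrade ``direct sum of free groups'' to ``finite product of free groups''. The join decomposition of $\Gamma$ into edgeless pieces $C_j$ must have finitely many pieces: a clique with one vertex from each piece gives $\Z^k$ quasiisometrically embedded in $A_\Lambda$, which forces $k\le r$. If some $C_j$ were infinite, the group would be a product of possibly infinite-rank free groups, which the statement permits. If finite rank is desired, note that $A_\Gamma$ is then not locally finite, while $A_\Lambda$ is, and a quasiisometric embedding bounds the number of points of $A_\Gamma$ in balls, which gives finiteness.

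The main obstacle I expect is checking the conventions carefully: that ``a 2--flat coming from a single $\Lambda_i$'' in \cref{thm:anti-rull2} genuinely means a standard flat whose two generators both lie in $\Lambda_i$, and that the proof of \cref{thm:anti-rull2} does not implicitly need $\Gamma$ finite or nondegenerate. The rest is a direct combination of \cref{thm:anti-rull2} and \cref{lem:tree_flats_parabolic}.
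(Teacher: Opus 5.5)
Your proof is correct. Its analytic core is the same as the paper's: since each $\Lambda_i$ is edgeless, no 2--flat of $A_\Lambda$ comes from a single factor, so \cref{thm:anti-rull2} forces every edge of $\Gamma$ to be spinal.

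You differ from the paper in the graph-theoretic step. The paper fixes an $\omega_\Gamma$--clique $C$ and uses spinality of its edges to show that every vertex outside $C$ misses exactly one vertex of $C$. It colours accordingly, then uses spinality of edges leaving $C$ to show the colouring is complete multipartite. This identifies the free factors concretely as $\omega_\Gamma$ colour classes, but it tacitly assumes $\omega_\Gamma<\infty$.

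You instead note that ``every edge is spinal'' says precisely that there is no induced edge plus isolated vertex, and you feed this into \cref{lem:tree_flats_parabolic}. This is shorter and makes the parallel with \cref{cor:ie_prod_free} explicit. However, the lemma gives no control on the number of join factors, so you need a separate finiteness step; that step also supplies what the paper leaves implicit.

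Either of your two finiteness arguments suffices on its own, but each needs a little more. The bound $k\le r$ needs a justification such as monotonicity of asymptotic dimension under coarse embeddings. The local-finiteness remark should be phrased in terms of separated points, because a quasiisometric embedding may collapse small balls. For example, the points $s^m$, with $s$ ranging over standard generators, are pairwise $2m$ apart and lie in the $m$--ball about $1$. So for large $m$ they must have distinct images in a finite ball of $A_\Lambda$.
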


\begin{proof}
Let $n=\omega_\Gamma$. If $n\le1$ then we are done, so suppose that $n\ge2$. We shall prove that $\Gamma$ is complete $n$--partite. Let $\varphi:A_\Gamma\to A_\Lambda$ be a stable quasiisometric embedding, where $\Lambda$ is a finite join of finite edgeless graphs $\Lambda_1,\dots,\Lambda_r$, and let $f:\Gamma^\ext\to\Lambda^\ext$ be the induced map of extension graphs.

Let $C=(v_1,\dots,v_n)$ be an $n$--clique of $\Gamma$. If $\Gamma=C$ then we are done. Otherwise, colour the vertex $v_i$ with colour $c_i$. Since no 2--flat of $A_\Lambda$ comes from any one $\Lambda_i$, \cref{thm:anti-rull2} tells us that every edge of $C$ is spinal. 

Let $v$ be any vertex of $\Gamma\ssm C$. It cannot be adjacent to every vertex of $C$, for then we would have $\omega_\Gamma>n$. But every edge of $C$ is spinal, so $v$ is adjacent to all but one vertex of $C$. If that vertex is $v_i$, then give $v$ colour $c_i$.

If $v,v'\in\Gamma\ssm C$ have colour $c_i$, then each is adjacent to all of $v_1,\dots,v_{i-1},v_{i+1},\dots,v_n$, so $v$ and $v'$ cannot be adjacent to each other because then we would have $\omega_\Gamma>n$. This shows that $\Gamma$ is $n$--partite. 

On the other hand, if $v,v'\in\Gamma\ssm C$ have distinct respective colours $c_i$ and $c_j$, then consider the edge $(v,v_j)$ of $\Gamma$. As it is spinal, one of its endpoints is adjacent to $v'$. By construction, it must be $v$. Thus $\Gamma$ is complete $n$--partite.
\end{proof}

\cref{cor:no_stable_Rull} is in stark contrast to Rull's theorem \cite[Thm~1.1]{rull:embedding}. This shows that methods like the Alice's diary construction used by Rull, or variants such as those used by Nairne \cite{nairne:embedding}, are necessary for constructing such embeddings.

More generally, but less strongly, we have the following, which is most interesting in the case $n=2$ where it constrains quasiisometric embeddings of 3--dimensional RAAGs into products of 2--dimensional RAAGs.

\begin{corollary} \label{cor:product_of_2dim}
Let $\Gamma$ be a connected graph with $\omega (\Gamma)\ge2n-1$, and let $\Lambda_1,\dots,\Lambda_r$ be finite graphs with $\omega_{\Lambda_i}\le n$ for all $i$. If there is a stable quasiisometric embedding $A_\Gamma\to A_\Lambda=\prod_{i=1}^rA_{\Lambda_i}$, then $\diam(\Gamma)\le2$.
\end{corollary}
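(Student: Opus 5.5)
Fix a stable quasiisometric embedding $\varphi:A_\Gamma\to A_\Lambda$, where $\Lambda=\bigast_{i=1}^r\Lambda_i$ is the join of the $\Lambda_i$. Let $f:\Gamma^\ext\to\Lambda^\ext$ be the combinatorial embedding of extension graphs given by \cref{prop:stable_implies_induced}. We argue by contradiction and suppose $\diam\Gamma\ge3$. The aim is to show that every standard $2$--flat of $A_\Gamma$ arising from an edge of a maximal clique is sent near a $2$--flat of a single $\Lambda_i$. The clique bound $\omega_{\Lambda_i}\le n$ should then contradict $\omega_\Gamma\ge 2n-1$.

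\textbf{Step 1: a large clique contains a non-spinal edge.} Fix a clique $C\subseteq\Gamma$ with $|C|\ge 2n-1$. Since $\Gamma$ is connected with diameter at least $3$, I will choose $v\in\Gamma$ at distance at least $2$ from as many vertices of $C$ as possible. If some vertex $u\in C$ is at distance at least $3$ from some vertex $v$, then $v$ has no neighbour in $C$, since such a neighbour would give $d(u,v)\le 2$. Then no edge of $C$ is adjacent to $v$, so every edge of $C$ is non-spinal. If instead every vertex of $\Gamma$ is within distance $2$ of all of $C$, I need to use a diametral pair $x,y$ with $d(x,y)\ge3$. The plan is to show that the neighbourhoods of $x$ and $y$ in $C$ are disjoint, since a common neighbour would give $d(x,y)\le2$. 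Hence some edge of $C$ avoids $N(x)$ or avoids $N(y)$ on both endpoints, and is therefore non-spinal. It remains to show that enough edges of $C$ are non-spinal. The cleanest target is: every vertex of $C$ lies in at most one spinal edge of $C$. I would try to get this by noting that two spinal edges sharing a vertex $c$ force strong adjacency constraints that contradict $d(x,y)\ge3$.

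\textbf{Step 2: apply \cref{thm:anti-rull2}.} For each non-spinal edge $e=\{a,b\}$ of $C$, \cref{thm:anti-rull2} says the flat $F_e$ maps near a $2$--flat spanned by two generators of the same factor $\Lambda_{i(e)}$. Equivalently, $f(a)$ and $f(b)$ lie in the same join factor of $\Lambda^\ext=\bigast_i\Lambda_i^\ext$. Stability sends each vertex $a\in C$ to a single vertex $f(a)$, which lies in one factor $\Lambda_{i(a)}^\ext$. Consider the graph on $C$ whose edges are the non-spinal edges. Any connected component of this graph is mapped by $f$ into a single $\Lambda_i^\ext$, as a clique because $f$ is combinatorial.

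\textbf{Step 3: counting.} A clique of $\Lambda_i^\ext$ has size at most $\omega_{\Lambda_i}\le n$, because cliques in extension graphs correspond to standard flats. So each component has at most $n$ vertices. Spinal edges are few: by Step 1, each vertex of $C$ meets at most one spinal edge. Hence the non-spinal graph on $C$ is the complement of a matching on at least $2n-1\ge3$ vertices, which is connected. This puts all of $C$, of size $2n-1>n$, into one $\Lambda_i^\ext$, which is a contradiction when $n\ge2$. The case $n=1$ is trivial, since then $\Lambda$ is discrete and there is no $2$--flat in any factor.

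The main obstacle is Step 1's combinatorics: showing that diameter at least $3$ forces spinal edges of $C$ to form at most a matching. If that fails, I would instead count directly, using that vertices of $C$ adjacent to $x$ and those adjacent to $y$ are disjoint sets. A spinal edge must then have one endpoint in $N(x)$ and one in $N(y)$. I would then choose which of these sets is small enough so that the non-spinal graph still has a component with more than $n$ vertices.
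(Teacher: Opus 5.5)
Your Steps 2 and 3 are sound: they are the paper's colouring argument, since a non-spinal edge has both endpoints sent into the same $\Lambda_i^\ext$, and no $\Lambda_i^\ext$ contains an $(n+1)$--clique. The first case of Step 1, where some vertex has no neighbour in $C$, also works. The gap is in the second case of Step 1. Your target ``each vertex of $C$ lies in at most one spinal edge of $C$'' is false. More seriously, no argument that looks only at the non-spinal graph on $C$ can give a contradiction.

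Here is a counterexample for $n\ge2$. Take a $(2n-1)$--clique $C=X\sqcup Y$ with $|X|=n$ and $|Y|=n-1$. Add a vertex $x$ joined to all of $X$, and a vertex $y$ joined to all of $Y$. This graph is connected, has $\omega_\Gamma=2n-1$, and has $d(x,y)=3$. Every edge between $X$ and $Y$ is spinal, so each vertex of $Y$ lies in $n\ge2$ spinal edges, which refutes the matching claim. The edges inside $X$ and inside $Y$ are non-spinal. So the non-spinal graph on $C$ has components $X$ and $Y$, of sizes $n$ and $n-1$. Both sizes are permitted by $\omega_{\Lambda_i}\le n$. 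Your fallback of choosing the smaller of $N(x)\cap C$ and $N(y)\cap C$ therefore cannot produce a component with more than $n$ vertices inside $C$.

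The missing idea is that the contradiction must use vertices outside $C$. In the example, each edge $\{x,a\}$ with $a\in X$ is non-spinal, because $y$ is adjacent to neither endpoint. So $X\cup\{x\}$ is a monochromatic $(n+1)$--clique.

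This repairs your approach in general. Take $d(x,y)=3$ with $x,y\notin C$. Let $X=N(x)\cap C$ and $Y=N(y)\cap C$, which are disjoint, and let $Z=C\ssm(X\cup Y)$.
\begin{itemize}
\item Every edge of $C$ that meets $Z$ misses the closed neighbourhood of $x$ or of $y$, so it is non-spinal. Hence if $Z\ne\varnothing$, all of $C$ receives one colour, which is impossible for $n\ge2$.
\item If $Z=\varnothing$, then, say, $|X|\ge n$. Neither endpoint of any edge of the clique $X\cup\{x\}$ is adjacent or equal to $y$, so that clique is a monochromatic $(n+1)$--clique.
\end{itemize}

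For comparison, the paper proves $\diam\Gamma\le2$ directly rather than by contradiction. Colour counting in a $(2n-1)$--clique produces a vertex $v$ and an $n$--clique $C$ with every edge from $v$ to $C$ spinal. Then every non-neighbour $w$ of $v$ is adjacent to all of $C$. The ban on monochromatic $(n+1)$--cliques, applied to $C\cup\{w\}$, yields a spinal edge from $w$ into $C$. That spinal edge puts any neighbour of $v$ within distance two of $w$.
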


\begin{proof}
If $n\le1$ then this follows from Corollary~\ref{cor:no_stable_Rull}. Otherwise, let $f:\Gamma^\ext\to\Lambda^\ext$ be the map induced by the stable quasiisometric embedding, as given by \cref{prop:stable_implies_induced}. Colour the vertices of $\Gamma\subseteq\Gamma^\ext$ according to which $\Lambda_i$ their image corresponds to. There can be no monochromatic $(n+1)$--clique in $\Gamma$. Moreover, \cref{thm:anti-rull2} implies that every edge between vertices of distinct colours is spinal.

Let $C_0\subseteq\Gamma$ be a $(2n-1)$--clique. If there exists a monochromatic $n$--subclique of $C_0$, then fix one such subclique $C$, and let $v\in C_0\ssm C$. Every edge from $v$ to $C$ is spinal. If instead $C_0$ has no monochromatic $n$--subcliques, then let $v\in C_0$ be arbitrary. Since at most $n-2$ other vertices of $C_0$ share a colour with $v$, there is an $n$--subclique $C\subset C_0$ such that no vertex of $C$ has the same colour as $v$. Again, every edge from $v$ to $C$ is spinal. 

Now let $w$ be an arbitrary vertex of $\Gamma$ that is not adjacent to $v$. Since every edge from $v$ to $C$ is spinal, $w$ is adjacent to every element of $C$. In particular, $\dist(w,v)=2$ and if $w'$ is any other vertex of $\Gamma$ not adjacent to $v$, then $\dist(w,w')\le2$. 

Now suppose that $v'$ is a vertex of $\Gamma$ that is adjacent to $v$. Any two such vertices are at distance at most two, so it only remains to show that $\dist(v',w)\le 2$. If $C$ is monochrome, then $w$ cannot have the same colour as it, for there are no monochromatic $(n+1)$--cliques in $\Gamma$, and hence every edge from $w$ to $C$ is spinal. Otherwise there is some vertex $u\in C$ whose colour is different to that of $w$, so the edge $(u,w)$ is spinal. In either case, $v'$ must be adjacent either to $w$ or to an element of $C$, which is adjacent to $w$.
\end{proof}


\bibliographystyle{alpha}
\footnotesize{\bibliography{biblio}}
\Addresses
\end{document}